\newcommand{\addresseshere}{%
  \enddoc@text\let\enddoc@text\relax
}
\DeclareMathOperator*{\argmax}{arg\,max}
\newtheorem{theorem}{Theorem}[section]
\newtheorem{proposition}[theorem]{Proposition}
\newtheorem{lemma}[theorem]{Lemma}
\newtheorem{corollary}[theorem]{Corollary}
\newtheorem{remark}[theorem]{Remark}
\newtheorem{conjecture}[theorem]{Conjecture}
\theoremstyle{definition}
\newtheorem{example}[theorem]{Example}
\newtheorem{definition}[theorem]{Definition}
\newtheorem{openproblem}{Open Problem}
\newtheorem{claim}{Claim}
\newenvironment{cproof}{\paragraph{\emph{Proof.}}}{\hfill$\square$}
\newtheoremstyle{notation}
{\topsep}
{\topsep}
{}
{}
{\bfseries}
{.}
{.5em} 
{} 
\theoremstyle{notation}
\newcommand*{\y}{\mathbf{y}}
\newcommand*{\x}{\mathbf{x}}
\newcommand*{\z}{\mathbf{z}}
\newcommand*{\PS}{\mathsf{PS}}
\newcommand*{\PF}{\mathsf{PF}}
\newcommand*{\PFND}{\mathsf{PF}^{\uparrow}}
\newcommand*{\PSINV}{\mathsf{PS}^{\mathrm{inv}}}
\newcommand*{\N}{\mathbb{N}}
\newcommand*{\PA}{\mathsf{PA}}
\newcommand*{\PAINV}{\mathsf{PA}^{\mathrm{inv}}}
\newcommand*{\PAINVND}{\mathsf{PA}^{\mathrm{inv},\uparrow}}
\title{
Permutation invariant parking assortments
}
\author[Chen]{Douglas M. Chen}
\address[D.~M.~Chen]{Department of Mathematics, Johns Hopkins University, Baltimore, MD 21218}
\email{\textcolor{blue}{\href{mailto:dchen101@jhu.edu}{dchen101@jhu.edu}}}
\author[Harris]{Pamela E. Harris}
\address[P.~E. Harris]{Department of Mathematical Sciences, University of Wisconsin-Milwaukee, Milwaukee, WI 53211}
\email{\textcolor{blue}{\href{mailto:peharris@uwm.edu}{peharris@uwm.edu}}}
\thanks{P.~E.~Harris was supported through a Karen Uhlenbeck EDGE Fellowship.}
\author[Mart\'{i}nez Mori]{J. Carlos Mart\'{i}nez Mori}
\address[J.~C.~Mart\'{i}nez Mori]{Schmidt Science Fellows}
\email{\textcolor{blue}{\href{mailto:jmartinezmori@schmidtsciencefellows.org }{jmartinezmori@schmidtsciencefellows.org}}}
\author[Pab\'{o}n-Cancel]{Eric J. Pab\'{o}n-Cancel}
\address[E.~J.~Pab\'{o}n-Cancel]{Department of Mathematics, Purdue University, West Lafayette, IN 47907}
\email{\textcolor{blue}{\href{mailto:epabonca@purdue.edu}{epabonca@purdue.edu}}}
\author[Sargent]{Gabriel Sargent}
\address[G. Sargent]{Department of Mathematics, University of Notre Dame, Notre Dame, IN 46556}
\email{\textcolor{blue}{\href{mailto:gsargent@nd.edu}{gsargent@nd.edu}}}
\subjclass[2020]{05A15, 05A19, 05E18}
\keywords{parking functions, parking sequences, parking assortments, permutation invariance}
\begin{document}

\maketitle
%% note that you DO NOT have to put your abstract here -- it is generated by \maketitle and the \abstract and \resume commands above

\begin{abstract}
We introduce \emph{parking assortments}, a generalization of parking functions with cars of assorted lengths.
In this setting, there are $n\in\mathbb{N}$ cars of lengths $\mathbf{y}=(y_1,y_2,\ldots,y_n)\in\mathbb{N}^n$ entering a one-way street with $m=\sum_{i=1}^ny_i$ parking spots.
The cars have parking preferences $\mathbf{x}=(x_1,x_2,\ldots,x_n)\in[m]^n$, where $[m]:=\{1,2,\ldots,m\}$, and enter the street in order.
Each car $i \in [n]$, with length $y_i$ and preference $x_i$, follows a natural extension of the classical parking rule: it begins looking for parking at its preferred spot $x_i$ and parks in the first $y_i$ contiguously available spots thereafter, if there are any.
If all cars are able to park under the preference list $\mathbf{x}$, we say $\mathbf{x}$ is a parking assortment for $\mathbf{y}$. 
Parking assortments also generalize \emph{parking sequences}, introduced by Ehrenborg and Happ, since each car seeks for the first contiguously available spots it fits in past its preference.
Given a parking assortment $\mathbf{x}$ for $\mathbf{y}$, we say it is \emph{permutation invariant} if all rearrangements of $\mathbf{x}$ are also parking assortments for $\mathbf{y}$.
While all parking functions are permutation invariant, this is not the case for parking assortments in general, motivating the need for characterization of this property.
Although obtaining a full characterization for arbitrary $n\in\mathbb{N}$ and $\mathbf{y}\in\mathbb{N}^n$ remains elusive, we do so for $n=2,3$.
Given the technicality of these results, we introduce the notion of \emph{minimally invariant} car lengths, for which the only invariant parking assortment is the all-ones preference list.
We provide a concise, oracle-based characterization of minimally invariant car lengths for any $n\in\mathbb{N}$. 
Our results around minimally invariant car lengths also hold for parking sequences.
\end{abstract}

\section{Introduction}
\label{section: introduction}

Parking functions were introduced by Konheim and Weiss in their study of hashing functions~\cite{konheim1966occupancy}.
We describe parking functions as follows.
Consider a one-way street with $n \in \N:=\{1,2,3,\ldots\}$ parking spots.
There are $n$ cars waiting to enter the street sequentially, each of which has a preferred spot.
When a car enters the street, it attempts to park in its preference.
If its preferred spot is occupied, the car continues driving down the street until it finds an unoccupied spot in which to park (if there is one).
If in this process a car does not find such a spot, we say parking fails.
If the preference list allows all cars to park, we say it is a \emph{parking function of length $n$}.
Let $\PF_n$ denote the set of parking functions of length $n$. It is well-known that \begin{align}
|\PF_n|=(n+1)^{n-1},\label{eq:enum pfs}
\end{align}
for a proof we refer the reader to \cite[Lemma~1]{konheim1966occupancy}.
Parking functions have been subject to much recent interest, from their variants, specializations, and generalizations~\cite{carlson2021parking, harris2023outcome}, to polyhedral combinatorics~\cite{benedetti2019combinatorial,amanbayeva2022convex}, and to their connections with other research areas such as sorting algorithms~\cite{harris2023lucky}, the game of Brussels Sprouts~\cite{ji2021brussels}, and the combinatorics of partially ordered sets~\cite{elder2023boolean}, to list a few.

Ehrenborg and Happ introduce a generalization of parking functions, known as \emph{parking sequences}, in which cars have assorted (integer) lengths~\cite{ehrenborg2016parking}. 
Let $\y = (y_{1}, y_{2}, \ldots, y_{n}) \in \N^n$ be a list of car lengths, where $y_i$ denotes the length of car $i \in [n] := \{1, 2, \ldots, n\}$. 
Consider a one-way street with $m = \sum_{i=1}^n y_i$ parking spots.
We encode the cars' preferences via the preference list $\x = (x_1, x_2, \ldots, x_n) \in [m]^n$.
In this scenario, car $i$ fails to park if, upon its arrival, the first available spot $j \geq x_i$ is such that either $j+y_i-1>m$ or at least one of the subsequent spots $j+1,\ldots,j+y_i-1$ is already occupied (this is referred to as a ``collision'' in the literature).
If all cars are able to park under the preference list $\x$, then $\x$ is said to be a \emph{parking sequence} (of length $n$) for $\y$.
For a fixed $\y$, let $\PS_n(\y)$ denote the set of parking sequences\footnote{
Although the length $n$ of the elements in $\PS_n(\y)$ is implied by the number of entries in $\y$, we include the subscript $n$ for clarity, as it keeps track of the number of cars.
} for $\y$.

For arbitrary $\y\in\N^n$, in \cite[Theorem~3]{ehrenborg2016parking}, Ehrenborg and Happ show that
\begin{align}
\label{eq: count}
    |\PS_n(\y)| = (y_1 + n) \cdot (y_1 + y_2 + n - 1) \cdots (y_1 + y_2 + \cdots + y_{n - 1} + 2).
\end{align}
Note that the set of parking functions of length $n$ is the same as $\PS_n(\y)$ in the special case where $\y = (1^n) := (1, 1, \ldots, 1) \in \N^n$. 
Thus, in the case where $\y=(1^n)$, Equation~\eqref{eq: count} reduces 
to Equation~\eqref{eq:enum pfs}.

In this work, we introduce a new set called \emph{parking assortments}. 
As in parking sequences, there are 
$n\in\mathbb{N}$ cars of assorted lengths $\mathbf{y}=(y_1,y_2,\ldots,y_n)\in\mathbb{N}^n$ entering a one-way street containing $m=\sum_{i=1}^ny_i$ parking spots.
The cars have parking preferences $\mathbf{x}=(x_1,x_2,\ldots,x_n)\in[m]^n$ and enter the street in order.
Each car $i \in [n]$, with length $y_i$ and preference $x_i$, follows a natural extension of the classical parking rule: it begins looking for parking at its preferred spot $x_i$ and parks in the first $y_i$ contiguously available spots thereafter (if there are any).
If all cars are able to park under the preference list $\mathbf{x}$, we say that $\mathbf{x}$ is a \emph{parking assortment} for $\mathbf{y}$. 
Note that parking assortments generalize parking functions, which correspond to the case in which $\mathbf{y}$ is a list of all ones. 
Note that parking assortments also generalize parking sequences. 
Namely, in the parking sequence setting a car seeks to park only once by attempting to park once it locates the first available spot at or after its preference, and ``gives up'' if it does not fit. 
However, in the parking assortment case, a car continues to seek parking after its preference, seeking forward  (until the end of the street) for enough available spaces in which to fit.
Thus, given $\mathbf{y}$, the set of parking assortments contains the set of 
parking sequences. 
For a fixed $\y$, let $\PA_n(\y)$ denote its set of parking assortments.

We remark that if $n=1$ or $n=2$, then $\PS_n(\y)=\PA_n(\y)$ for any $\y \in \N^n$, however this fails to hold for $n\geq 3$. 
To illustrate, consider  Figure~\ref{figure: pa vs ps} in which  $\y=(1,3,1)$ and $\x=(2,1,1)$. 
In this case, $\x$ is a parking assortment for $\y$, as the second car can proceed past spot $1$ and fill the last three spots. 
However, $\x$ is not a parking sequence, as the second car will attempt to park in spots $1$, $2$, and $3$, resulting in a ``collision'' with the first car, which has already occupied spot $2$. 
\begin{figure}[ht]
    \centering
    \begin{subfigure}[h]{0.475\linewidth}
    \centering
    \includegraphics[width=\linewidth]{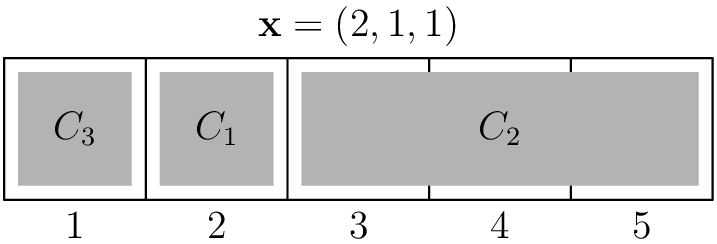}
    \caption{Under assortment parking rules}
    \end{subfigure}
    \hfill 
    \begin{subfigure}[h]{0.475\linewidth}
    \centering
    \includegraphics[width=\linewidth]{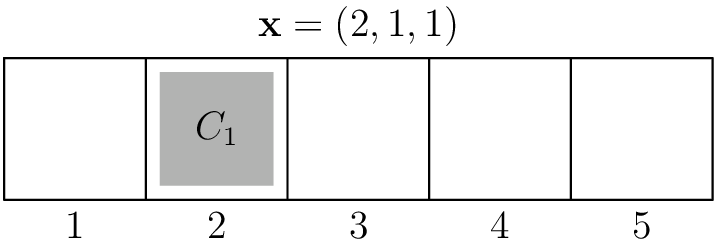}
    \caption{Under sequence parking rules}
    \end{subfigure}    
    \caption{
    If $\y = (1,3,1)$, then $\x=(2,1,1)\in\PA_3(\y)$, but $\x\notin \PS_3(\y)$.}
    \label{figure: pa vs ps}
\end{figure}

Many of the techniques used to study parking functions and their generalizations rely on the fact that any rearrangement of a parking function is itself a parking function~\cite[pp.~836]{yan2015parking}.
However, it is known that this is not the case for parking sequences, and hence also not the case for parking assortments. 
To illustrate this, we present the example in Figure~\ref{figure: example}.
Given car lengths $\y = (1, 2, 2)$, the preference list $\x = (1, 2, 1)$ is a parking sequence and hence a parking assortment, whereas its rearrangement $\x' = (2, 1, 1)$ is neither.
\begin{figure}[ht]
    \centering
    \begin{subfigure}[h]{0.475\linewidth}
    \includegraphics[width=\linewidth]{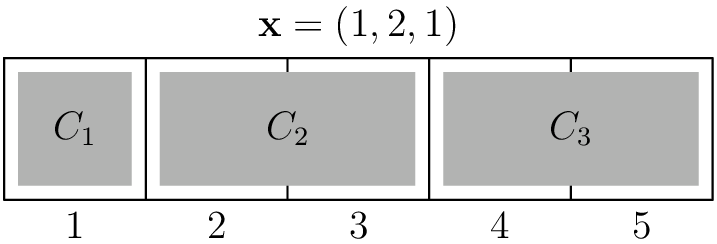}
    \caption{All cars can park}
    \end{subfigure}
    \hfill
    \begin{subfigure}[h]{0.475\linewidth}
    \includegraphics[width=\linewidth]{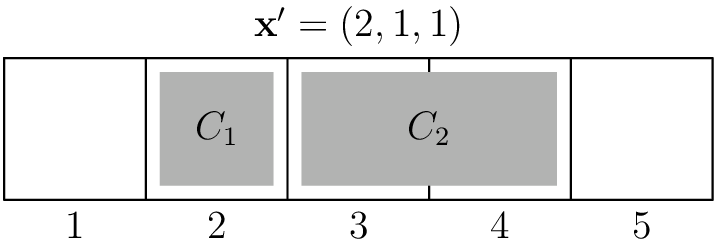}
    \caption{Car 3 fails to park}
    \end{subfigure}
    \caption{
    If $\y = (1, 2, 2)$, then $\x = (1, 2, 1)\in\PA_3(\y)$ and $\x' = (2, 1, 1)\notin \PA_3(\y)$.}
    \label{figure: example}
\end{figure}

This motivates the following questions:
\begin{enumerate}[label=\roman*)]
\item When is a rearrangement of a parking sequence itself a parking sequence?
\item When is a rearrangement of a parking assortment itself a parking assortment?
\end{enumerate}
For certain families of car lengths $\y\in\N^n$, Adeniran and Yan study rearrangements of parking sequences, their characterizations, and enumerations \cite{adeniran2021increasing}.
In this work, we initiate the analogous study of rearrangements of parking assortments. 
Formally, given $\mathbf{y} \in \N^n$ and $\x\in\PA_n(\y)$, we say $\x$ is a \emph{permutation invariant parking assortment for $\y$} whenever all of the rearrangements\footnote{In referencing the entries of a list, we use the words ``permutation'' and ``rearrangement'' synonymously.} of $\x$ are themselves in $\PA_n(\y)$.
We generally omit the word ``permutation'' and say \emph{invariant parking assortments} for short.
For a fixed $\y \in \N^n$, let $\PAINV_n(\y)$ denote the set of invariant parking assortments for $\y$.

We now summarize our contributions.
In Section \ref{sec:properties},  we present some closure properties of parking assortments. In Section \ref{section: constant car lengths}, we consider parking assortments for car lengths of the form $\y=(c^n):=(c,c,\ldots,c)\in\mathbb{N}^n$. Our results include:
\begin{enumerate}
\item A noninductive proof characterizing the elements of $\PAINV_n((c^n))$ (Theorem~\ref{theorem: constant}), which is analogous to ~\cite[Theorem 3.4]{adeniran2021increasing}, characterizing the elements of $\PSINV_n((c^n))$.
\item Establish that the number of nondecreasing parking assortments is given by $C_n = \frac{1}{n+1}\binom{2n}{n}$, the $n$th Catalan number\footnote{OEIS \textcolor{blue}{\href{http://oeis.org/A000108}{A000108}}.} (Corollary~\ref{corollary: catalan count}).
\item Establish that $|\PAINV_n((c^n))|=(n+1)^{n-1}$ (Corollary~\ref{corollary: constant count}). 
\end{enumerate}
In Section~\ref{section: minimally invariant car lengths}, we state and prove our main result (Theorem~\ref{theorem: mi}), which gives necessary and sufficient conditions on the car lengths $\y \in \N^n$ for the set of invariant parking assortments to consist solely of the all-ones preference list (i.e., $\PAINV_n(\y) = \{(1^n)\}$).
We refer to such car lengths $\y$ as \emph{minimally invariant}.
Our characterization is concise in the sense that it is given in the form of $O\left(n \cdot \sum_{i=1}^n y_i\right)$ efficiently-computable checks (i.e., distinct executions of the ``parking experiment'').

Applications of Theorem \ref{theorem: mi} include the following:
\begin{enumerate}
    \item Boolean formula characterizations of minimally invariant car lengths for two and three cars (Corollary~\ref{corollary: mi pair} and Theorem~\ref{theorem: mi triple}, respectively).
    
    \item Establishing the closure property that if $\y \in \N^n$ is minimally invariant, then so is its restriction $\y_{\vert_{i}} := (y_1, y_2, \ldots, y_i)$ for any $i \in [n]$ (Corollary~\ref{corollary: restriction}).
    \item 
    Establishing the property that if $\y \in \N^n$ is strictly increasing, then it is minimally invariant (Theorem \ref{theorem: increasing length}).
\end{enumerate}

In Section~\ref{section: two and three cars} we provide a full characterization of invariant parking assortments with two and three cars, beyond those implied by minimally invariant car lengths  (Theorem~\ref{theorem: invariant pair} and Theorem~\ref{theorem: invariant triple}, respectively).
We conclude in Section~\ref{section: open problems} 
with multiple directions for future study. 

\section{Properties of Parking Assortments}\label{sec:properties}
Recall the following results of Adeniran and Yan on  parking sequences.
\begin{proposition}[[Proposition~2.3 in \cite{adeniran2021increasing}]
\label{proposition: ps nondecreasing}
Let $\y = (y_1, y_2, \ldots, y_n) \in \N^n$. 
Further, let $\x = (x_1, x_2, \ldots, x_n) \in [m]^n$ be nondecreasing.
Then, $\x \in \PS_n(\y)$ if and only if $x_i \leq 1 + \sum_{j=1}^{i-1} y_j$ for all $i \in [n]$.
\end{proposition}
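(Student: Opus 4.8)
The plan is to track the set of occupied spots after each car parks. Set $s_i := 1 + \sum_{j=1}^{i-1} y_j$ for $i \in [n+1]$, so that $s_1 = 1$ and $s_{n+1} = m+1$; the claimed condition is exactly $x_i \le s_i$ for all $i \in [n]$. The heart of the argument will be the following invariant, proved by induction on $i$: if $\x$ is nondecreasing and $x_j \le s_j$ for every $j \le i$, then cars $1,\ldots,i$ all park and, immediately after car $i$ parks, the set of occupied spots is exactly $\{1,2,\ldots,s_{i+1}-1\}$. For the inductive step I would assume the occupied set is $\{1,\ldots,s_i-1\}$ and $x_i \le s_i$; then the first available spot at or after $x_i$ is $s_i$, and since every spot $\ge s_i$ is free and $s_i + y_i - 1 = s_{i+1}-1 \le m$ (using $i \le n$), car $i$ parks in $\{s_i, s_i+1, \ldots, s_{i+1}-1\}$, restoring the invariant; the base case $i=0$ or $i=1$ is immediate.

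Given the invariant, the backward direction is immediate: applying it with $i = n$ shows all $n$ cars park, so $\x \in \PS_n(\y)$.

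For the forward direction I would argue by contradiction. Suppose $\x \in \PS_n(\y)$ is nondecreasing but $x_i > s_i$ for some $i$, and let $i$ be least such. By minimality, $x_j \le s_j$ for all $j < i$, so the invariant applied to index $i-1$ tells us that just before car $i$ arrives the occupied spots are exactly $\{1,\ldots,s_i-1\}$. Since $x_i \ge s_i+1$, spot $x_i$ is free and is itself the first available spot at or after $x_i$; hence car $i$ either fails to park (when $x_i + y_i - 1 > m$), contradicting $\x \in \PS_n(\y)$, or parks in $\{x_i,\ldots,x_i+y_i-1\}$, leaving the nonempty block $\{s_i,\ldots,x_i-1\}$ unoccupied. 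In the latter case every later car $k > i$ has preference $x_k \ge x_i$ by monotonicity, so it searches forward from a spot $\ge x_i$ and parks at some spot $\ge x_i$, never touching $\{s_i,\ldots,x_i-1\}$. Thus spot $s_i$ stays unoccupied even after all cars park. But the cars occupy pairwise disjoint spot-sets of total size $\sum_{j=1}^n y_j = m$ inside $[m]$, forcing all of $[m]$ to be occupied — a contradiction.

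The routine parts are the base case and the bookkeeping with the partial sums $s_i$. The one step that genuinely uses the nondecreasing hypothesis, and the one I expect to be the crux, is the forward direction's claim that a gap left behind by an overshooting car can never be refilled: monotonicity of $\x$ is precisely what prevents any subsequent car from preferring, and hence reaching, a spot to the left of where car $i$ parked.
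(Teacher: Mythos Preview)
Your proof is correct. The paper does not itself prove this proposition---it is quoted from Adeniran and Yan---but it does prove the analogous parking-assortment version, explicitly noting it uses ``the same techniques,'' so that proof is the natural comparison.

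The backward (sufficiency) directions coincide: both you and the paper prove by induction the invariant that after car $i$ parks, the occupied spots are exactly $\{1,\ldots,s_{i+1}-1\}$.

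The forward (necessity) directions differ. The paper argues by a single pigeonhole step: if $x_i > s_i$ for some $i$, then by monotonicity each of $x_i,\ldots,x_n$ exceeds $s_i$, so cars $i,\ldots,n$---which collectively require $\sum_{j\ge i} y_j$ spots---must all park among the at most $\sum_{j\ge i} y_j - 1$ spots strictly to the right of spot $s_i$, which is impossible. You instead take the least violating index, reuse the invariant through step $i-1$, and exhibit a specific spot $s_i$ that no later car can ever reach. Both arguments rest on the same observation (a car never parks left of its preference), but the paper's counting version is shorter and needs neither the minimality of $i$ nor the inductive invariant, while yours has the virtue of naming the exact witness spot that remains empty.
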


Using the same techniques, we obtain an analogous result for parking assortments.
We utilize this result throughout our work.
\begin{proposition}
\label{proposition: nondecreasing}
Let $\y = (y_1, y_2, \ldots, y_n) \in \N^n$. 
Further, let $\x = (x_1, x_2, \ldots, x_n) \in [m]^n$ be nondecreasing.
Then, $\x \in \PA_n(\y)$ if and only if $x_i \leq 1 + \sum_{j=1}^{i-1} y_j$ for all $i \in [n]$.
\end{proposition}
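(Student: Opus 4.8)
The plan is to prove both directions of the biconditional for a nondecreasing preference list $\x$, mirroring the structure of Proposition~\ref{proposition: ps nondecreasing} but accounting for the fact that a car under the assortment rule may drive past occupied spots rather than ``giving up'' at the first collision.

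First I would prove the forward direction by contraposition: suppose $x_i > 1 + \sum_{j=1}^{i-1} y_j$ for some $i \in [n]$, and I claim $\x \notin \PA_n(\y)$. The key observation is that because $\x$ is nondecreasing, cars $1, 2, \ldots, i-1$ all have preferences $x_1 \le x_2 \le \cdots \le x_{i-1} \le x_i$, and more to the point each such car $j < i$ has $x_j \le x_i$; I want to argue that the total number of spots these first $i-1$ cars can occupy in the region $\{1, 2, \ldots, x_i - 1\}$ is at most $\sum_{j=1}^{i-1} y_j < x_i - 1$, so at least one spot strictly before $x_i$ is left empty. Actually the cleaner route: cars $1, \ldots, i-1$ occupy a total of $\sum_{j=1}^{i-1} y_j$ spots, all of which lie in $\{1, 2, \ldots, m\}$; since each of these cars begins searching at a spot $\le x_i$ but only ever parks \emph{at or after} its own preference, and since the total length $\sum_{j=1}^{i-1} y_j \le x_i - 2 < x_i - 1$, there are strictly fewer than $x_i - 1$ occupied spots among $\{1, \ldots, x_i-1\}$ after the first $i-1$ cars park — wait, I should be careful, since a car could in principle park partly before and we need the spots to be contiguous and at-or-after preference. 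The honest argument: every spot occupied by one of cars $1, \ldots, i-1$ that lies in $\{1,\ldots,x_i-1\}$ is accounted for by those cars' total length $\sum_{j=1}^{i-1}y_j \le x_i-2$, so the pigeonhole gives at least one empty spot $s \in \{1, \ldots, x_i - 1\}$ after cars $1$ through $i-1$ have parked. Now when car $i$ arrives it starts at $x_i$, so it never parks in spot $s < x_i$; hence spot $s$ remains empty forever, contradicting that all $m$ spots get filled (since $|\mathbf{y}|_1 = m$ cars' total length equals the number of spots, every spot must be occupied if all cars park). Therefore $\x \notin \PA_n(\y)$.

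Next I would prove the reverse direction: assume $x_i \le 1 + \sum_{j=1}^{i-1} y_j$ for all $i \in [n]$ and show by induction on $i$ that car $i$ successfully parks and, moreover, that after cars $1, \ldots, i$ park, the occupied region is exactly the prefix $\{1, 2, \ldots, \sum_{j=1}^{i} y_j\}$. The base case $i = 1$: car $1$ has $x_1 \le 1$, so $x_1 = 1$, parks in spots $1, \ldots, y_1$, establishing the invariant. For the inductive step, assume after cars $1, \ldots, i-1$ park the occupied set is exactly $\{1, \ldots, S_{i-1}\}$ where $S_{i-1} := \sum_{j=1}^{i-1} y_j$. Car $i$ starts searching at $x_i \le 1 + S_{i-1}$. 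Every spot in $\{x_i, \ldots, S_{i-1}\}$ is occupied and every spot in $\{S_{i-1}+1, \ldots, m\}$ is free, with at least $y_i$ such free spots available (since $m - S_{i-1} = \sum_{j \ge i} y_j \ge y_i$); the first run of $y_i$ contiguous free spots at or after $x_i$ is therefore $\{S_{i-1}+1, \ldots, S_{i-1}+y_i\} = \{S_{i-1}+1, \ldots, S_i\}$, so car $i$ parks there and the occupied region becomes $\{1, \ldots, S_i\}$, completing the induction. Taking $i = n$ shows all cars park, so $\x \in \PA_n(\y)$.

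The main obstacle is making the pigeonhole step in the forward direction fully rigorous: I need to be sure that the cars $1, \ldots, i-1$ genuinely cannot fill all of $\{1, \ldots, x_i - 1\}$, which rests on the fact that a car under the assortment rule only parks at spots $\ge$ its own preference and the nondecreasing hypothesis forces $x_1 \le \cdots \le x_{i-1}$ — but even so one must rule out pathological interleavings; the safe formulation is purely a counting one (total length $\sum_{j<i} y_j \le x_i - 2$ spots cannot cover the $x_i - 1$ spots of $\{1, \ldots, x_i-1\}$), which sidesteps any delicate case analysis. Everything else is the same bookkeeping as in Proposition~\ref{proposition: ps nondecreasing}, and since the assortment rule and the sequence rule coincide whenever no car ever needs to pass over an occupied spot — which is exactly the situation in the reverse direction's induction — the proof of Proposition~\ref{proposition: ps nondecreasing} carries over essentially verbatim there.
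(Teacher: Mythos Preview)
Your proof is correct and follows essentially the same approach as the paper (contrapositive for the forward direction, prefix-occupancy induction for the reverse); the only variation is that the paper's forward step counts on the right---cars $i,\ldots,n$ need $\sum_{j\ge i} y_j$ spots but only $\sum_{j\ge i} y_j - 1$ lie at or past $x_i$---while you count on the left. One small tightening: to conclude that spot $s$ ``remains empty forever'' you must invoke the nondecreasing hypothesis for \emph{every} car $j \ge i$, not just car $i$, since each such car has $x_j \ge x_i > s$ and hence cannot reach $s$ either.
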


\begin{proof}
We claim that $\x \in \PA_n(\y)$ implies $x_i \leq 1 + \sum_{j=1}^{i-1} y_j$ for all $i \in [n]$.
To prove this, we assume there exists $i \in [n]$ with $x_i > 1 + \sum_{j=1}^{i-1} y_j$ and show that $\x \notin \PA_n(\y)$.
Pick any such $i$ and note that, since $\x$ is nondecreasing, we have $1 + \sum_{j=1}^{i-1} y_j < x_i \leq x_{i + 1} \leq \cdots \leq x_n$.
Therefore, cars $i, i + 1, \ldots, n$, which collectively require $\sum_{j=i}^n y_j$ spots, can only park in the at most $\sum_{j=1}^{n} y_j - (1 + \sum_{j=1}^{i-1} y_j) = \sum_{j=i}^n y_j - 1$ spots to the inclusive right of spot $x_i$.
This implies that at least one of cars $i, i + 1, \ldots, n$ is unable to park, which is to say $\x \notin \PA_n(\y)$.

Next, we claim that $x_i \leq 1 + \sum_{j=1}^{i-1} y_j$ for all $i \in [n]$ implies $\x \in \PA_n(\y)$.
We prove this by induction on the number of cars that have arrived.
Upon the arrival of car $1$, our assumption that $x_1 \leq 1$ implies car $1$ of length $y_1$ parks in spots $1, 2, \ldots, y_1$.
By way of induction, for $k\in[n-1]$, suppose that cars $1, 2, \ldots, k$ collectively park in spots $1, 2, \ldots, \sum_{i=1}^{k} y_i$.
We show that, upon the arrival of car $k + 1$, it parks in spots $1 + \sum_{i=1}^{k} y_i, 2 + \sum_{i=1}^{k} y_i, \ldots, \sum_{i=1}^{k+1} y_i$.
By the inductive hypothesis, spots $1, 2, \ldots \sum_{i=1}^{k} y_i$ are occupied whereas spots $1 + \sum_{i=1}^{k} y_i, 2 + \sum_{i=1}^{k} y_i, \ldots, \sum_{i=1}^{n} y_i$ are unoccupied.
Then, together with our assumption that $x_{k + 1} \leq 1 + \sum_{i=1}^{k} y_i$, we have that the first $y_{k + 1}$ contiguously unoccupied spots that car $k + 1$ finds upon its arrival are spots $1 + \sum_{i=1}^{k} y_i, 2 + \sum_{i=1}^{k} y_i, \ldots, \sum_{i=1}^{k+1} y_i$, and so those are the spots in which it parks. 
This ultimately shows all cars are able to park, which is to say $\x \in \PA_n(\y)$.
\end{proof}

Next, we show the following technical result showing that invariant parking assortments are closed under the concurrent removal of the last car length from $\y$ and the largest preference from $\x$. 
To state the result, we need the following notation.
For ${\bf{v}} \in \N^n$ and $i \in [n]$, 
let ${\bf{v}}_{\widehat{i}} := (v_1, \ldots, v_{i-1}, \widehat{v}_i, v_{i+1}, \ldots, v_n)$ denote the removal of the $i$th entry of ${\bf{v}}$ and ${\bf{v}}_{\vert_i} := (v_1, v_2, \ldots, v_i)$ denote the restriction of ${\bf{v}}$ to its first $i$ entries.
\begin{lemma}
\label{lemma: removal}
Let $\y \in \N^n$ and $\x = (x_1, x_2, \ldots, x_n) \in \PAINV_n(\y)$.
If $k = \argmax_{i \in [n]} x_i$, then $\x_{\widehat{k}} \in \PAINV_{n-1}(\y_{\vert_{n-1}})$.
\end{lemma}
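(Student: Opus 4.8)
The plan is to fix an arbitrary rearrangement $(w_1,w_2,\ldots,w_{n-1})$ of $\x_{\widehat{k}}$, reinsert the removed (maximum) preference $x_k$ in the last coordinate to form $\x' := (w_1,w_2,\ldots,w_{n-1},x_k)$, and exploit that $\x'$ is a rearrangement of $\x$, hence $\x' \in \PA_n(\y)$ since $\x$ is permutation invariant. In the parking experiment for $\x'$ on the street with $m = \sum_{i=1}^n y_i$ spots, all $n$ cars park; cars $1,2,\ldots,n-1$ (of lengths $y_1,\ldots,y_{n-1}$) collectively occupy $m' := m - y_n = \sum_{i=1}^{n-1} y_i$ spots, so car $n$ occupies precisely the remaining $y_n$ spots. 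As car $n$'s occupied spots are contiguous, these leftover spots form a single block $B = \{p, p+1, \ldots, p+y_n-1\}$, and since car $n$ prefers $x_k$ while every other preference is at most $x_k$, one sees $p \geq x_k \geq w_j$ for all $j \in [n-1]$.

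The heart of the argument is to show $B = \{m'+1, \ldots, m\}$, i.e.\ car $n$ parks at the very end of the street. Suppose not; then $p \leq m'$, so the nonempty set $\{p+y_n,\ldots,m\}$ lies outside $B$ and is therefore entirely occupied by cars among $1,\ldots,n-1$. Because spots are never vacated, $B$ is free throughout the experiment, so any block disjoint from $B$ lies entirely in $\{1,\ldots,p-1\}$ or entirely in $\{p+y_n,\ldots,m\}$; in particular the car occupying spot $p+y_n$ parks entirely to the right of $B$. Let $i^\ast$ be the first-arriving car among $1,\ldots,n-1$ that parks entirely inside $\{p+y_n,\ldots,m\}$. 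By minimality of $i^\ast$, every car parked before it sits inside $\{1,\ldots,p-1\}$, so when $i^\ast$ arrives the whole suffix $\{p,\ldots,m\}$ is free. Since $i^\ast$ has preference at most $x_k \leq p$ and $y_{i^\ast} \leq m - (p+y_n) + 1 \leq m - p + 1$, car $i^\ast$ finds its $y_{i^\ast}$ contiguous free spots with left endpoint at most $p$; this forces it either to occupy spot $p \in B$ (impossible, since $B$ stays free) or to park inside $\{1,\ldots,p-1\}$ (contradicting $i^\ast \subseteq \{p+y_n,\ldots,m\}$). This contradiction yields $B = \{m'+1,\ldots,m\}$.

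With $B = \{m'+1,\ldots,m\}$, cars $1,\ldots,n-1$ occupy exactly $\{1,\ldots,m'\}$ in the experiment for $\x'$; in particular each $w_j \leq m'$ (a preference exceeding $m'$ would land its car in $B$), so $(w_1,\ldots,w_{n-1}) \in [m']^{n-1}$ is a legitimate preference list for $\y_{\vert_{n-1}}$. Moreover none of these cars ever scans past spot $m'$, so the experiment restricted to cars $1,\ldots,n-1$ is, step for step, the experiment with preferences $(w_1,\ldots,w_{n-1})$ and lengths $(y_1,\ldots,y_{n-1})$ on a street with $m'$ spots. Hence $(w_1,\ldots,w_{n-1}) \in \PA_{n-1}(\y_{\vert_{n-1}})$, and since the rearrangement was arbitrary, $\x_{\widehat{k}} \in \PAINV_{n-1}(\y_{\vert_{n-1}})$.

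I expect the main obstacle to be precisely the middle step: ruling out that car $n$ parks in the interior, leaving occupied spots to its right. The danger is that a long car among $1,\ldots,n-1$ might leapfrog the free block $B$ when $B$ is too short to hold it; the argument sidesteps this by passing to the \emph{first} car $i^\ast$ that ends up to the right of $B$, at which instant the entire suffix $\{p,\ldots,m\}$ is empty, so $i^\ast$ cannot have skipped over it. Degenerate configurations (e.g.\ $p=1$, or ties in the $\argmax$ defining $k$) require a moment's care but are absorbed by the same reasoning.
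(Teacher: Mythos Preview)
Your proof is correct and follows the same strategy as the paper's: place the maximum preference in the last slot, invoke invariance to get a parking assortment, argue that car $n$ must occupy the final $y_n$ spots, and conclude that the first $n-1$ cars park exactly as they would on the shorter street of length $m'$. The paper simply asserts that car $n$ ``necessarily parks in the last $y_n$ spots'' without further comment, whereas you supply the nontrivial justification via the first-car-to-the-right argument (ruling out that some earlier car leapfrogs the free block $B$); this is a genuine tightening of the exposition rather than a different approach.
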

\begin{proof}
Consider the rearrangement $\x'$ of $\x$ defined as $\x' = (x_1, \ldots, x_{k-1}, x_{k+1}, \ldots, x_{n}, x_k)$, where we move the entry $x_k$ to the last index and shift the entries $x_{k+1},\ldots,x_n$ to the left by one index. 
Note that $\x \in \PAINV_n(\y)$ implies $\x' \in \PAINV_n(\y)$.
In particular, it implies that all cars are able to park under $\x'$.
Given that $k=\argmax_{i \in [n]} x_i$, under $\x'$, car $n$ has the largest preference, so it necessarily parks in the last $y_n$ spots.
Since $\x' \in \PAINV_n(\y)$, membership in $\PAINV_n(\y)$ is invariant under permutation of the first $n-1$ entries of $\x'$.
Therefore, for any rearrangement $\x'' = (x_1'', x_2'', \ldots, x_n'')$ of $\x'$ with $x_n'' = x_k$, all cars are able to park, car $n$ parks in the last $y_n$ spots, and the first $n-1$ cars park in the first $\sum_{i=1}^{n-1} y_i$ spots.
Note that the first $n-1$ entries of any such rearrangement correspond to a rearrangement of $\x_{\widehat{k}}$.
It follows that $\x_{\widehat{k}} \in \PAINV_{n-1}(\y_{\vert_{n-1}})$.
\end{proof}

The following result is utilized in Section \ref{section: two and three cars} where we study invariant parking assortments with two and three cars.
We present it here since it gives a quick test for determining that a preference list is not an invariant parking assortment.
\begin{lemma}
\label{lemma: minentry}
Let $\y=(y_1,y_2,\ldots,y_n) \in \mathbb{N}^n$ and $\x=(x_1,x_2,\ldots,x_n) \in \mathbb{N}^n$. 
Let $k = \min_{i \in [n]} y_i$.
If there exists $i \in [n]$ with $1 < x_i \leq k$, then $\x \notin \PAINV_n(\y)$.
\end{lemma}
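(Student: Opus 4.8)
The plan is to exhibit a single bad rearrangement of $\x$ that fails to park, which suffices to conclude $\x \notin \PAINV_n(\y)$. Suppose $i \in [n]$ satisfies $1 < x_i \le k = \min_{j\in[n]} y_j$. The idea is to force the first car to occupy spot $1$ (and more), then place a car with preference $x_i$ second so that it runs into a ``short gap'' it cannot use. Concretely, first I would argue there must exist some index $\ell$ with $x_\ell = 1$: indeed, if every entry of $\x$ were at least $2$, then by Proposition~\ref{proposition: nondecreasing} the nondecreasing rearrangement of $\x$ already fails the condition $x_{(1)} \le 1$, so $\x \notin \PA_n(\y) \supseteq \PAINV_n(\y)$ and we are done. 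So we may assume some car has preference $1$; call its length $y_\ell \ge k$.

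Next I would consider the rearrangement $\x''$ whose first entry is $x_\ell = 1$ and whose second entry is $x_i$, with the remaining entries in any order. Under $\x''$, the first car parks in spots $1, 2, \ldots, y_\ell$; since $y_\ell \ge k \ge x_i > 1$, the preferred spot $x_i$ of the second car lies within $\{2, \ldots, y_\ell\}$ and is already occupied. The second car then looks forward from spot $x_i$ for the first $y_i$ contiguously available spots. Because spots $x_i, x_i+1, \ldots, y_\ell$ are all occupied, the earliest available block starts at spot $y_\ell + 1$. But I want to engineer a genuine obstruction, not merely a shift. The cleaner route: take $\x''$ to have first entry $1$ (car of length $y_\ell$), and recall $y_\ell \ge k$. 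Actually the key observation is simpler — I would instead directly use that the problematic car $i$ itself, placed second after a length-$y_\ell$ car preferring $1$, is forced past spot $y_\ell$, and then an inductive/counting argument on remaining street length shows some car cannot park.

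Let me restructure: the cleanest argument is that in the rearrangement $\x''$ with $x''_1 = 1$ and $x''_2 = x_i$, the second car cannot park \emph{in spot $1$}, yet spots $1$ through $y_\ell \ge x_i$ are full, so it parks no earlier than spot $y_\ell+1$. Meanwhile, place the remaining $n-2$ cars in nondecreasing-by-preference order after position $2$; the total remaining capacity past spot $y_\ell$ for cars $2,3,\ldots,n$ is $m - y_\ell = \sum_{j\ne\ell} y_j$, but these $n-1$ cars collectively need $\sum_{j\ne\ell} y_j$ spots \emph{and} car $2$ (length $y_i$) wastes the spots $y_\ell+1-?$... — here the bookkeeping needs care. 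The honest obstruction is: car $2$ with preference $x_i>1$ skips over the occupied spots $x_i,\dots,y_\ell$, so if $x_i \le y_\ell$ it effectively parks starting at $y_\ell+1$, using spots $y_\ell+1,\ldots,y_\ell+y_i$; this is identical to having preference $y_\ell+1$. No waste occurs. So the real subtlety, and \textbf{the main obstacle}, is that a single ``short preference'' need not cause failure on its own — one must combine it with the presence of a car preferring $1$ \emph{whose length exceeds $1$}, and more carefully choose which car goes first so that the occupied block strictly contains spot $x_i$ and the forced forward jump leaves a dead gap of size between $1$ and $y_i - 1$, or alternatively strands a later car.

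Given this, the argument I would actually commit to is: if no entry of $\x$ equals $1$, apply Proposition~\ref{proposition: nondecreasing} to its sorted rearrangement and conclude failure immediately. Otherwise, let $\ell$ be an index with $x_\ell = 1$. If $y_\ell = 1$, then since $k = \min_j y_j = 1 = y_\ell$ but $k \ge x_i > 1$ is impossible, so actually $k \ge 2$ forces $y_\ell \ge 2$ whenever such an $i$ exists; hence every car of preference $1$ has length $\ge k \ge 2$. Now form the rearrangement putting car $\ell$ first and car $i$ second: car $\ell$ fills $\{1,\dots,y_\ell\}$ with $y_\ell \ge k \ge x_i$, so spot $x_i \in \{2,\dots,y_\ell\}$ is occupied. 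Then — and this is the crucial reduction — observe that relative to this configuration, car $i$'s behaviour from preference $x_i$ is \emph{identical} to that of a car with preference $y_\ell + 1 > 1$; iterating, $\x'' \in \PA_n(\y)$ would force the sorted-preference condition of Proposition~\ref{proposition: nondecreasing} on the induced subproblem for cars $\{2,\ldots,n\}$ on the remaining $m - y_\ell$ spots, and since $x_i > 1$ contributes a preference exceeding the allowed bound there once we account for the lost spot(s) $x_i,\ldots,y_\ell$ that car $i$ cannot reclaim, parking fails. I expect the write-up to require carefully translating ``car $i$ jumps over occupied spots'' into a clean capacity deficit; the main obstacle is making that deficit argument airtight without re-deriving Proposition~\ref{proposition: nondecreasing} from scratch, so I would aim to invoke it on the explicit sub-instance.
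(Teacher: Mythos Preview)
Your approach has a genuine gap: placing a preference-$1$ car first and the preference-$x_i$ car second does \emph{not} create any obstruction. As you yourself observe, once car $\ell$ fills spots $1,\ldots,y_\ell$ and $x_i \le y_\ell$, the second car's behaviour from preference $x_i$ is identical to its behaviour from preference $1$ (or from preference $y_\ell+1$): it simply slides past the occupied block and parks starting at $y_\ell+1$. There are no ``lost spots $x_i,\ldots,y_\ell$'' --- those spots are occupied by car $\ell$, not wasted. The capacity-deficit argument you sketch at the end does not materialize, because the configuration after two cars is exactly what it would be had both preferred spot $1$. No amount of bookkeeping on the remaining subproblem will produce a failure from this rearrangement alone.

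The paper's argument is much simpler and uses the opposite idea: move the entry $x_i$ to the \emph{first} position, i.e.\ take $\x' = (x_i, x_1, \ldots, x_{i-1}, x_{i+1}, \ldots, x_n)$. Then car $1$ (of length $y_1 \ge k$) parks starting at spot $x_i$, leaving a gap of size $x_i - 1$ in spots $1,\ldots,x_i-1$. Since $0 < x_i - 1 < k \le y_j$ for every $j$, no subsequent car can ever fit in that gap, so it remains unfilled and parking fails. That is the entire proof. The key move you missed is that the ``bad'' preference should go first, not second: the point of $x_i \le k$ is precisely that it creates a nonempty leftmost gap too short for any car.
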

\begin{proof}
Consider the rearrangement $\x' =(x_i,x_1,\ldots,x_{i-1},x_{i+1},\ldots,x_n)$ of $\x$ in which we move entry $x_i$ to the first index in $\x'$ and shift entries $x_1,x_2,\ldots,x_{i-1}$ right by one index. 
Then, car $1$ under $\x'$ leaves a gap (of unoccupied spots) of size $x_{i}-1< k$ to the left of spot $x_i$, which no subsequent car can fill.
Therefore, $\x' \notin \PA_n(\y)$, which implies $\x \notin \PAINV_n(\y)$.
\end{proof}

\section{Constant Car Lengths}
\label{section: constant car lengths}
In this section, we consider cars having the same lengths. Our first result gives a characterization for parking assortments analogous to that of parking sequences as established in \cite[Theorem 3.4]{adeniran2021increasing}. 
\begin{theorem}
\label{theorem: constant}
Let $\y = (c^n) \in\N^n$ and $\x = (x_1, x_2, \ldots, x_n) \in [\sum_{i=1}^n y_i]^n$.
Then, $\x \in \PAINV_{n}(\y)$ if and only if
\begin{enumerate}[label=(\arabic*)]
    \item\label{it: constant (1)} $x_{i} \equiv 1 \mod c$, for all $i \in [n]$, and
    \item\label{it: constant (2)} $|\{i \in [n]:x_i \leq c \cdot j\}| \geq j$, for all $j \in [n]$.
\end{enumerate}
\end{theorem}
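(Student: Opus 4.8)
The plan is to prove both implications by reducing to the nondecreasing case via Proposition~\ref{proposition: nondecreasing}, using the fact that $\x \in \PAINV_n(\y)$ if and only if the unique nondecreasing rearrangement $\x^{\uparrow}$ of $\x$ lies in $\PA_n(\y)$ \emph{and} every rearrangement does — but for constant car lengths we expect the nondecreasing test to already capture everything, so the real content is translating the inequality $x_i^{\uparrow} \le 1 + (i-1)c$ into conditions \ref{it: constant (1)} and \ref{it: constant (2)}.

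First I would establish the forward direction. Suppose $\x \in \PAINV_n(\y)$. For \ref{it: constant (1)}: fix $i$ and move $x_i$ to the front, giving a rearrangement $\x'$; car $1$ then occupies spots $x_i, x_i+1, \ldots, x_i + c - 1$ (assuming it fits) and leaves a gap of $x_i - 1$ spots to its left. Since all remaining cars have length $c$, any maximal gap that is not a multiple of $c$ can never be exactly filled, and here the cars must collectively fill \emph{all} $nc$ spots; an argument like that of Lemma~\ref{lemma: minentry} — generalized to gaps of size $x_i - 1$ rather than $x_i - 1 < k$ — shows $x_i - 1$ must be divisible by $c$, i.e. $x_i \equiv 1 \pmod c$. (One must also handle the case $x_i - 1 \ge c$, where the gap could in principle be filled by several length-$c$ cars, so the conclusion is precisely $c \mid x_i - 1$.) For \ref{it: constant (2)}: consider the nondecreasing rearrangement $\x^{\uparrow}$, which is in $\PA_n(\y)$ by invariance, so by Proposition~\ref{proposition: nondecreasing} we have $x_i^{\uparrow} \le 1 + (i-1)c$ for all $i$. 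Fix $j \in [n]$; the inequality for index $i = n - j + 1$ reads $x_{n-j+1}^{\uparrow} \le 1 + (n-j)c$, which combined with \ref{it: constant (1)} (so that $x_{n-j+1}^{\uparrow} \le 1 + (n-j)c$ forces $x_{n-j+1}^{\uparrow} \le (n-j+1)c$, but more usefully $x_{n-j+1}^{\uparrow} \le 1+(n-j)c$ means the $(n-j+1)$st smallest entry is $\le (n-j)c + 1 \le (n-j+1)c$)... I would instead argue contrapositively: if $|\{i : x_i \le cj\}| < j$ then at least $n - j + 1$ entries exceed $cj$, so $x_{n-j+1}^{\uparrow} > cj \ge 1 + (n-j)c$ would need checking — here I must be careful with the exact boundary, using \ref{it: constant (1)} to upgrade $x_{n-j+1}^{\uparrow} > cj$ to $x_{n-j+1}^{\uparrow} \ge cj + 1$, contradicting Proposition~\ref{proposition: nondecreasing} applied at index $n-j+1$ only if $cj + 1 > 1 + (n - (n-j+1))c = 1 + (j-1)c$; since $cj > (j-1)c$ this holds. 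Wait — indexing: Proposition gives $x_i^\uparrow \le 1 + (i-1)c$, so at $i = n-j+1$ the bound is $1 + (n-j)c$, and I need $x_{n-j+1}^\uparrow \le cj$ to be automatic... The cleaner route is the reverse indexing: condition \ref{it: constant (2)} at $j$ says the $j$th smallest entry, $x_j^\uparrow$, satisfies $x_j^\uparrow \le cj$, and I would show this is equivalent (given \ref{it: constant (1)}) to $x_j^\uparrow \le 1 + (j-1)c$, since both say $x_j^\uparrow \in \{1, c+1, \ldots, (j-1)c+1\}$. That equivalence is the crux.

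For the reverse direction, assume \ref{it: constant (1)} and \ref{it: constant (2)}. I would first show $\x^{\uparrow} \in \PA_n(\y)$: condition \ref{it: constant (2)} at $j$ gives $|\{i : x_i^\uparrow \le cj\}| \ge j$, hence $x_j^\uparrow \le cj$; combined with \ref{it: constant (1)}, $x_j^\uparrow \equiv 1 \pmod c$ and $x_j^\uparrow \le cj$ together force $x_j^\uparrow \le (j-1)c + 1 = 1 + \sum_{\ell < j} y_\ell$, so Proposition~\ref{proposition: nondecreasing} yields $\x^\uparrow \in \PA_n(\y)$, and moreover all cars park filling spots $1, \ldots, nc$ in blocks of $c$. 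The key observation for invariance is that conditions \ref{it: constant (1)} and \ref{it: constant (2)} are symmetric in the entries of $\x$ — they depend only on the multiset $\{x_1, \ldots, x_n\}$ — so if $\x$ satisfies them, so does every rearrangement, and by the argument just given every rearrangement's nondecreasing sort is in $\PA_n(\y)$. But I still need: \emph{a given} rearrangement $\x$ (not just its sort) is in $\PA_n(\y)$. Here I would run the parking process directly and show inductively that when car $t$ arrives, the occupied spots form a union of length-$c$ blocks starting at positions $\equiv 1 \pmod c$, the preference $x_t \equiv 1 \pmod c$ lands at the start of a block-slot, and car $t$ parks in the first empty length-$c$ block at or after $x_t$; the existence of such a block before running out of street is guaranteed because \ref{it: constant (2)} bounds how many preferences can "pile up" beyond any given point.

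The main obstacle I anticipate is the last step: proving that \emph{every} rearrangement (not merely the sorted one) of an $\x$ satisfying \ref{it: constant (1)}–\ref{it: constant (2)} is a genuine parking assortment. The nondecreasing characterization handles sorted lists for free, but permutation invariance of $\PA_n$ is exactly what fails in general, so one cannot simply invoke it — the "noninductive" content of the theorem is showing that for constant lengths the combinatorial conditions are strong enough to survive arbitrary reordering. I expect this to require a direct counting/pigeonhole argument: for any prefix of a rearrangement, the number of cars whose preference is $\le cj$ is at most the number of block-slots $\le cj$ plus a controlled overflow, so no collision and no overflow past spot $nc$ can occur. Making that bookkeeping precise — tracking, after processing a prefix, exactly which block-slots are filled — is where the real work lies, and it is plausible the authors instead give a slick global argument (e.g., matching each car to a block-slot and showing \ref{it: constant (2)} guarantees a system of distinct representatives via Hall's theorem), which I would also attempt as an alternative.
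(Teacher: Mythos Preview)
Your proposal is correct in substance and shares the paper's two key observations: for the forward direction of \ref{it: constant (1)}, moving $x_i$ to the front and arguing the left gap must have size divisible by $c$; and for the reverse direction, the block-structure fact that under \ref{it: constant (1)} every car parks with its rear at a position $\equiv 1 \pmod c$. Where you diverge is mainly in economy.

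For the forward direction of \ref{it: constant (2)}, you route through Proposition~\ref{proposition: nondecreasing} applied to $\x^{\uparrow}$, which works (and in fact $x_j^{\uparrow} \le 1 + (j-1)c \le cj$ holds without invoking \ref{it: constant (1)}, so your detour through the congruence is unnecessary). The paper instead argues directly by counting: if \ref{it: constant (2)} fails at $j$, then more than $n-j$ preferences exceed $cj$, but only $c(n-j)$ spots lie to the right of $cj$, so some car fails. Both routes are short.

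For the reverse direction you correctly flag that showing $\x^{\uparrow} \in \PA_n(\y)$ is not enough and that one must handle arbitrary rearrangements, and you already have the right ingredient (the block structure). But you then reach for induction or Hall's theorem, anticipating ``real work.'' The paper avoids all of this with a one-line contrapositive: assuming \ref{it: constant (1)}, if $\x \notin \PA_n(\y)$ then some car runs off the end, which (because parking happens in $c$-blocks) means that for some $j$ more than $n-j$ cars had preference $> cj$, contradicting \ref{it: constant (2)}. This is exactly the ``pile-up'' bound you describe informally; once you see that \ref{it: constant (1)} collapses the problem to $n$ unit-length cars in $n$ block-slots, the argument is just the classical parking-function pigeonhole, and no inductive bookkeeping or matching theorem is needed. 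Since \ref{it: constant (1)} and \ref{it: constant (2)} are manifestly permutation-invariant, this gives $\x \in \PAINV_n(\y)$ immediately.
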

\begin{proof}
We first claim that $\x \in \PAINV_n(\y)$ implies both \ref{it: constant (1)} and \ref{it: constant (2)} hold.
We prove this by contrapositive, which is to say that if either of \ref{it: constant (1)} or \ref{it: constant (2)} does not hold, then $\x \notin \PAINV_n(\y)$.
Suppose \ref{it: constant (1)} does not hold, meaning there exists $i \in [n]$ with $x_i \not\equiv 1 \mod c$.
Consider the permutation $\x'$ of $\x$ given by $\x' = (x_i, x_1, \ldots, x_{i-1}, x_{i+1}, \ldots, x_n)$. 
After the first car parks in spot $x_i$, the number of unoccupied spots to its left is not a multiple of $c$, and so no subset of subsequent cars can fully occupy them.
This shows that $\x\notin \PAINV_n(\y)$ in this first case.
Now suppose \ref{it: constant (2)} does not hold, meaning there exists $j \in [n]$ with $|\{i \in [n] : x_i \leq c \cdot j\}| < j$.
This implies $|\{i \in [n] : x_i > c \cdot j\}| > n - j$, which is to say that more than $n - j$ preferences are greater than $c  j$.
It follows that parking will fail under any permutation $\x'$ of $\x$ such that the first car has one of these preferences greater than $c j$.
However, there are only $c  n - c  j= c(n-j)$ spots to the right of spot $c  j$, meaning at most $n - j$ cars can park there. This shows that $\x\notin \PAINV_n(\y)$ in this second case.

Next, we show that if both \ref{it: constant (1)} and \ref{it: constant (2)} hold, then $\x \in \PAINV_n(\y)$.
Note that \ref{it: constant (1)} implies cars can only park with their rear bumpers in one of the $n$ spots that are congruent to $1 \mod c$. Otherwise, upon the arrival of a car, it either prefers and parks in a spot that is not congruent to $1 \mod c$ or parks immediately after a sequence of consecutively parked cars, the first of which must have preferred and parked in a spot that is not congruent to $1 \mod c$.
We now show via contrapositive that if \ref{it: constant (1)} holds, then \ref{it: constant (2)} implies $\x\in\PAINV_n(\y)$.
Suppose $\x \notin \PA_n(\y)$ (recall $\PAINV_n(\y) \subseteq \PA_n(\y)$).
Then, for some $j \in [n]$, more than $n - j$ cars want to park in the $c(n - j)$ spots to the right of spot $c  j$.
This implies $|\{i \in [n]:x_i > c \cdot j\}| > n - j$ and therefore $|\{i \in [n]:x_i \leq c \cdot j\}| < j$ for some $j \in [n]$, violating \ref{it: constant (2)}. 
This shows that \ref{it: constant (1)} and \ref{it: constant (2)} together imply $\x\in\PA_n(\y)$.
Lastly, note that \ref{it: constant (1)} and \ref{it: constant (2)} are preserved under permutations, so $\x \in \PAINV_n(\y)$.
\end{proof}

Given that the characterization provided in Theorem~\ref{theorem: constant} is identical to that of parking sequences for constant car lengths (\cite[Theorem 3.4]{adeniran2021increasing}), the following is immediate. 
\begin{corollary}
\label{proposition: same}
If $\y=(c^n)$, then $\PSINV_n(\y)=\PAINV_n(\y)$.
\end{corollary}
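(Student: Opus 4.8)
The plan is to obtain the equality by comparing characterizations. By \cite[Theorem~3.4]{adeniran2021increasing}, when $\y = (c^n)$ a preference list $\x \in [\sum_{i=1}^n y_i]^n$ lies in $\PSINV_n(\y)$ precisely when it satisfies conditions \ref{it: constant (1)} and \ref{it: constant (2)} of Theorem~\ref{theorem: constant}; and Theorem~\ref{theorem: constant} asserts that $\x$ lies in $\PAINV_n(\y)$ under exactly the same two conditions. Since membership in either set is equivalent to the conjunction of \ref{it: constant (1)} and \ref{it: constant (2)}, the two sets coincide, giving $\PSINV_n(\y) = \PAINV_n(\y)$.

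A more self-contained route is to prove the two inclusions directly. The inclusion $\PSINV_n(\y) \subseteq \PAINV_n(\y)$ holds for every $\y \in \N^n$, since $\PS_n(\y) \subseteq \PA_n(\y)$ and both invariance notions quantify over the same set of rearrangements. For the reverse inclusion when $\y = (c^n)$, let $\x \in \PAINV_n(\y)$. By Theorem~\ref{theorem: constant}, $\x$ satisfies \ref{it: constant (1)}, and (as noted within the proof of that theorem) this forces every car to occupy an aligned length-$c$ block $\{kc+1, kc+2, \ldots, (k+1)c\}$; hence at every stage of the parking process the occupied region is a union of such blocks. Consequently, when a car with preference $x \equiv 1 \bmod c$ arrives, the first unoccupied spot $j \ge x$ is itself the left end of one of these blocks, so $j, j+1, \ldots, j+c-1$ are all unoccupied and the car parks there with no collision — which is exactly the outcome prescribed by the parking sequence rule. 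Thus the sequence and assortment parking rules produce identical outcomes on $\x$; since every rearrangement of $\x$ also satisfies \ref{it: constant (1)}, the same coincidence holds for each rearrangement, and therefore $\x \in \PSINV_n(\y)$.

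There is no substantive obstacle here. The only point requiring care is the bookkeeping in the second approach — verifying that condition \ref{it: constant (1)}, which is manifestly permutation-invariant, genuinely enforces the block-aligned structure of the occupied set at every step — and this is precisely the observation already used in the proof of Theorem~\ref{theorem: constant}. Once it is in place, the equivalence of the two parking rules on such preference lists is immediate, and the stated corollary follows.
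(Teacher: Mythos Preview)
Your first approach --- comparing the characterization in Theorem~\ref{theorem: constant} with that of \cite[Theorem~3.4]{adeniran2021increasing} --- is exactly the paper's proof, which simply declares the corollary immediate from the identical characterizations. Your second, self-contained route via block alignment is a correct alternative, but it is not needed once the first observation is made.
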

Note that Corollary~\ref{proposition: same} does not hold without the invariant condition. For example, if $\y=(2,2,2)$, then $\x=(3,2,1)$ is a parking assortment, yet it is not a parking sequence.  

Adeniran and Yan give a determinant formula for the number of nondecreasing parking sequences of length $n$ \cite[Corollary 2.4]{adeniran2021increasing}.
They also show that the number of nondecreasing parking sequences of length $n$ with constant car lengths $c\in\N$ 
is given by the Fuss-Catalan 
numbers\footnote{OEIS  \href{http://oeis.org/A355172}{A355172}.}, $A_{k,n}=\frac{1}{kn+1}\binom{kn + n}{n}$ \cite[Corollary 2.6]{adeniran2021increasing}.
They also show that the number of invariant parking sequences of length $n$ with constant car lengths $c \in \N$ is given by $(n+1)^{n-1}$, independent of $c$ \cite[Corollary 3.5]{adeniran2021increasing}.
We establish that the number of nondecreasing invariant parking assortments of length $n$ with constant car lengths $c \in \N$ is given by $C_n=\frac{1}{n+1}\binom{2n}{n}$, the $n$th Catalan number\footnote{OEIS \href{http://oeis.org/A000108}{A000108}.}, independent of $c$. 
Note that by Corollary \ref{proposition: same}, this result also shows the Catalan numbers enumerate nondecreasing invariant parking sequences with constant car lengths. 

In what follows, we use \[\PAINVND_n(\y) := \{\x = (x_1, x_2, \ldots, x_n) \in \PAINV_n(\y) : x_1 \leq x_2 \leq \cdots \leq x_n \}\] to denote the set of nondecreasing invariant parking assortments given $\y \in \N^n$.

\begin{corollary}
\label{corollary: catalan count}
If $\y = (c^n) \in \N^n$, then $\left|\PAINVND_n(\y)\right| = C_n$, where $C_n$ is the $n$th Catalan number.
\end{corollary}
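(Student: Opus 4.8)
The plan is to invoke Theorem~\ref{theorem: constant} to reduce the problem to a purely combinatorial count of lattice-point sequences, and then to recognize that count as a classical Catalan-number interpretation.

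First I would specialize Theorem~\ref{theorem: constant} to nondecreasing preference lists. Condition~\ref{it: constant (1)} forces each entry to lie in $\{1, c+1, 2c+1, \ldots, (n-1)c+1\}$, so I would write $x_i = c(a_i - 1) + 1$ with $a_i \in [n]$. The assignment $\x \mapsto \mathbf{a} = (a_1, a_2, \ldots, a_n)$ is a bijection between the preference lists in $[cn]^n$ satisfying~\ref{it: constant (1)} and the sequences in $[n]^n$, and it carries nondecreasing lists to nondecreasing sequences. Under this reparametrization I would translate condition~\ref{it: constant (2)}: since $c \geq 1$, for integers $a_i, j$ we have $x_i \leq cj \iff c(a_i-1)+1 \leq cj \iff a_i \leq j$, so~\ref{it: constant (2)} becomes $|\{i \in [n] : a_i \leq j\}| \geq j$ for all $j \in [n]$.

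Next I would observe that, for a nondecreasing $\mathbf{a}$, this family of inequalities collapses to the single family $a_j \leq j$ for all $j \in [n]$: if $a_j \leq j$ then $a_1, \ldots, a_j$ are all at most $j$, giving at least $j$ such indices; conversely, if at least $j$ of the $a_i$ are at most $j$, then in particular the $j$th smallest among them, which is $a_j$ itself, is at most $j$. Hence $\left|\PAINVND_n((c^n))\right|$ equals the number of nondecreasing sequences $1 \leq a_1 \leq a_2 \leq \cdots \leq a_n$ satisfying $a_i \leq i$ for every $i \in [n]$ (the bound $a_n \leq n$ being automatic).

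Finally I would identify this count with $C_n$. Setting $b_i = a_i - 1$ yields the sequences $0 \leq b_1 \leq b_2 \leq \cdots \leq b_n$ with $b_i \leq i-1$, which is a standard Catalan object: it is precisely the set of nondecreasing parking functions of length $n$, and it is also in bijection with Dyck paths of semilength $n$ (send $\mathbf{b}$ to the path whose $i$th up-step sits at height $b_i$). Invoking this classical enumeration gives $C_n$. I expect the only mildly delicate step to be the bookkeeping in the equivalence ``$|\{i : a_i \leq j\}| \geq j$ for all $j$'' $\iff$ ``$a_i \leq i$ for all $i$'' for nondecreasing $\mathbf{a}$; the remaining steps are either a direct appeal to Theorem~\ref{theorem: constant} or a citation of the well-known Catalan count, so neither should pose difficulty.
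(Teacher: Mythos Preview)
Your proposal is correct and follows essentially the same approach as the paper: both construct the bijection $\x \mapsto \mathbf{a}$ given by $a_i = 1 + (x_i - 1)/c$, verify via Theorem~\ref{theorem: constant} that its image is exactly the set of nondecreasing sequences $\mathbf{a}$ with $a_i \leq i$ (i.e., the nondecreasing parking functions $\PFND_n$), and then cite the classical fact that $|\PFND_n| = C_n$. The paper presents the inverse map explicitly rather than unpacking condition~\ref{it: constant (2)} as you do, but the underlying argument is the same.
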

\begin{proof}
We establish a bijection between $\PAINVND_n(\y)$ and the set of nondecreasing parking functions of length $n$, which we denote by $\PFND_n$.
The latter is well-known to be enumerated by $C_n$, and so constructing a bijection
$\varphi: \PAINVND_n(\y) \rightarrow \PFND_n$ 
is sufficient to prove the result.

Given $\x=(x_1,\ldots,x_n) \in \PAINVND_n(\y)$, define $\varphi: \PAINVND_n(\y) \rightarrow \PFND_n$
by
\begin{align*}
    \varphi(\x) = \left(1 + \frac{x_1-1}{c}, 1 + \frac{x_2-1}{c}, \ldots,1 + \frac{x_n-1}{c}\right).
\end{align*} 
Since $\x$ is nondecreasing, $\varphi(\x)$ is nondecreasing. 
Also, since $\x$ is nondecreasing, conditions \ref{it: constant (1)} and \ref{it: constant (2)} of Theorem \ref{theorem: constant} together are equivalent to $x_i \in \{1, c + 1, \ldots, (i - 1) c + 1\}$ for all $i \in [n]$. It follows that $\varphi(\x)_i \in [i]$ for all $i \in [n]$, where $\varphi(\x)_i$ denotes the $i$th entry of $\varphi(\x)$.
This shows that $\varphi(\x) \in \PFND_n$.

We establish that $\varphi$ is a bijection by constructing its inverse.
Given $\z=(z_1, z_2, \ldots, z_n) \in \PFND_n$, define $\psi: \PFND_n \rightarrow \PAINVND_n(\y)$
\begin{align*}
    \psi(\z)= \left(1 + (z_1-1)c, 1 + (z_2-1)c, \ldots,1 + (z_n-1)c \right).
\end{align*}
Since $\z$ is nondecreasing, $\psi(\z)$ is nondecreasing.
Also, $\z \in \PFND_n$ implies $z_i \in [i]$ for all $i \in [n]$.
In turn this implies $\psi(\z)_i \in \{1,c+1,\ldots,(i-1)c+1\}$ for all $i \in [n]$, where $\psi(\z)_i$ is the $i$th entry of $\psi(\z)$.
Since $\psi(\z)$ is nondecreasing, $\psi(\z)_i \in \{1,c+1,\ldots,(i-1)c+1\}$ for all $i \in [n]$ is equivalent to conditions \ref{it: constant (1)} and \ref{it: constant (2)} of Theorem \ref{theorem: constant}.
This shows that $\psi(\z) \in \PAINVND_n(\y)$. 

Lastly, observe that $(\psi \circ \varphi)(\x) = \x$ and $(\varphi \circ \psi)(\z) = \z$, meaning $\psi = \varphi^{-1}$.
\end{proof}

The next result follows immediately from Corollary~\ref{proposition: same} and \cite[Corollary 3.5]{adeniran2021increasing}. We give an independent proof utilizing Corollary \ref{corollary: catalan count}.
\begin{corollary}
%[\cite{adeniran2021increasing}, Corollary 3.5]
\label{corollary: constant count}
If $\y = (c^n) \in \N^n$, then $|\PAINV_n(\y)| = (n+1)^{n-1}$.
\end{corollary}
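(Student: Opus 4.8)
The plan is to leverage the bijection $\varphi \colon \PAINVND_n(\y) \to \PFND_n$ from Corollary~\ref{corollary: catalan count} together with the fact that both $\PAINV_n(\y)$ and $\PF_n$ are closed under rearrangements, reducing the count of $\PAINV_n(\y)$ to the known count $|\PF_n| = (n+1)^{n-1}$ from Equation~\eqref{eq:enum pfs}. The key observation is that Theorem~\ref{theorem: constant} gives a permutation-invariant characterization of membership in $\PAINV_n(\y)$, so $\x \in \PAINV_n(\y)$ if and only if the nondecreasing rearrangement of $\x$ lies in $\PAINVND_n(\y)$; likewise $\z \in \PF_n$ if and only if its nondecreasing rearrangement lies in $\PFND_n$.

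Concretely, I would proceed as follows. First, extend $\varphi$ to a map $\Phi \colon \PAINV_n(\y) \to \PF_n$ defined by the same entrywise formula $\Phi(\x) = \left(1 + \frac{x_1 - 1}{c}, \ldots, 1 + \frac{x_n - 1}{c}\right)$, now without the nondecreasing hypothesis. Second, check that $\Phi$ is well-defined: given $\x \in \PAINV_n(\y)$, condition~\ref{it: constant (1)} of Theorem~\ref{theorem: constant} guarantees each $x_i \equiv 1 \bmod c$, so each entry $1 + \frac{x_i - 1}{c}$ is a positive integer, and applying the entrywise formula commutes with sorting the entries into nondecreasing order; since the nondecreasing rearrangement of $\x$ is in $\PAINVND_n(\y)$, its image under $\varphi$ is the nondecreasing rearrangement of $\Phi(\x)$, which lies in $\PFND_n$, and hence $\Phi(\x) \in \PF_n$ by rearrangement-invariance of parking functions. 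Third, define the inverse $\Psi \colon \PF_n \to \PAINV_n(\y)$ by the entrywise formula $\Psi(\z) = \left(1 + (z_1 - 1)c, \ldots, 1 + (z_n - 1)c\right)$ and verify well-definedness symmetrically using the same sorting argument and rearrangement-invariance of $\PAINV_n(\y)$. Fourth, observe that $\Psi \circ \Phi$ and $\Phi \circ \Psi$ are the respective identities, since this already holds entrywise (indeed it was checked in the proof of Corollary~\ref{corollary: catalan count} for the sorted versions, but the composition identities are purely arithmetic and hold for every entry regardless of order). Conclude $|\PAINV_n(\y)| = |\PF_n| = (n+1)^{n-1}$.

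I do not anticipate a serious obstacle here; the main subtlety — and the only step requiring genuine care — is confirming that well-definedness of $\Phi$ and $\Psi$ on the full (unsorted) sets really does reduce cleanly to the nondecreasing case. This hinges on two facts: (i) applying a fixed entrywise monotone bijection $t \mapsto 1 + \frac{t-1}{c}$ to a list and then sorting gives the same result as sorting first and then applying the map, and (ii) membership in $\PAINV_n(\y)$ (respectively $\PF_n$) depends only on the multiset of entries, which for $\PAINV_n(\y)$ is exactly the content of Theorem~\ref{theorem: constant} and for $\PF_n$ is classical. Once these are in hand, the bijection $\Phi$ transports $\PAINV_n(\y)$ onto $\PF_n$ and the enumeration follows from Equation~\eqref{eq:enum pfs}.
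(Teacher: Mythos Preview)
Your proposal is correct and follows essentially the same approach as the paper: both use the entrywise bijection from Corollary~\ref{corollary: catalan count} together with the fact that membership in $\PAINV_n(\y)$ and in $\PF_n$ depends only on the multiset of entries. The paper phrases this as ``$\varphi$ preserves the number of distinct entries, hence the number of rearrangements of each nondecreasing representative,'' whereas you extend $\varphi$ entrywise to an explicit bijection $\Phi\colon \PAINV_n(\y)\to\PF_n$; your formulation is arguably cleaner, since what is really needed (and what the entrywise map guarantees) is preservation of the full multiplicity profile, not merely the number of distinct entries.
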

\begin{proof}
Note that the bijection $\varphi: \PAINVND_n(\y) \rightarrow \PFND_n$ in the proof of Corollary~\ref{corollary: catalan count} preserves the number of distinct entries, i.e., $|\{x_1, x_2, \ldots, x_n\}| = |\{\varphi(\x)_1, \varphi(\x)_2, \ldots, \varphi(\x)_n\}|$ for every $\x \in \PAINVND(\y)$.
Therefore, the number of distinct rearrangements of $\x \in \PAINVND(\y)$ is the same as the number of distinct rearrangements of $\varphi(\x) \in \PFND_n$.
Lastly, note that $\PF_n$ consists of the union of the sets of rearrangements of the elements of $\PFND_n$.
\end{proof}

\section{Minimally Invariant Car Lengths}
\label{section: minimally invariant car lengths}
Given the technicality of the results in the previous section, we now introduce the concept of minimally invariant car lengths. 
We remark that since all parking sequences are parking  assortments, our results for minimally invariant car lengths also hold in the context of parking sequences. 

\begin{definition}
We say $\y= (y_1, y_2, \ldots, y_n) \in \N^n$ is \emph{minimally invariant} if $\PAINV_n(\y) = \{(1^n)\}$.
\end{definition}

This definition was motivated by the following result of Adeniran and Yan~\cite{adeniran2021increasing}.
\begin{theorem}[\cite{adeniran2021increasing}, Theorem~3.2.]
\label{theorem: increasing}
Let $\y = (y_1, y_2, \ldots,y_n) \in \N^n$ be strictly increasing.
Then, $\PSINV_n(\y) = \{(1^n)\}$.
\end{theorem}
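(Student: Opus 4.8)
The plan is to prove both inclusions, the inclusion $\{(1^n)\} \subseteq \PSINV_n(\y)$ being immediate: the list $(1^n)$ is its own unique rearrangement, and it satisfies $1 \le 1 + \sum_{j=1}^{i-1} y_j$ for all $i$, so $(1^n) \in \PS_n(\y)$ by Proposition~\ref{proposition: ps nondecreasing}. For the reverse inclusion I would induct on $n$. The base case $n=1$ is forced, since a single car of length $y_1 = m$ can only park from preference $1$. For the inductive step, let $\x \in \PSINV_n(\y)$, set $k = \argmax_{i \in [n]} x_i$ and $M = x_k$. By the parking-sequence analogue of Lemma~\ref{lemma: removal} — whose proof transfers after replacing Proposition~\ref{proposition: nondecreasing} by Proposition~\ref{proposition: ps nondecreasing}, the key point being that placing the maximal preference on the last car forces that car into the final $y_n$ spots (otherwise spot $m$ is taken by an earlier car that, upon arrival, would have found an eventual spot of the last car already occupied) — we obtain $\x_{\widehat{k}} \in \PSINV_{n-1}(\y_{\vert_{n-1}})$. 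Since $\y_{\vert_{n-1}}$ is again strictly increasing, the inductive hypothesis gives $\x_{\widehat{k}} = (1^{n-1})$. Hence $\x$ is a rearrangement of the multiset $\{1^{n-1}, M\}$, and it remains to show $M = 1$.

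Suppose toward a contradiction that $M \ge 2$. Writing $S_j := \sum_{i=1}^{j} y_i$ with $S_0 = 0$ (so $m = S_n$), the nondecreasing rearrangement $(1, \ldots, 1, M)$ lies in $\PS_n(\y)$, so Proposition~\ref{proposition: ps nondecreasing} gives $M \le 1 + S_{n-1} = 1 + m - y_n$. For $p \in [n-1]$, consider the rearrangement $\x^{(p)}$ in which every car prefers spot $1$ except car $p$, which prefers $M$. Cars $1, \ldots, p-1$ fill $[1, S_{p-1}]$ contiguously from the left. If $M \ge S_{p-1} + 2$, then spot $M$ is free upon car $p$'s arrival, and since $M + y_p - 1 \le m - y_n + y_p \le m$, car $p$ parks in $[M, M + y_p - 1]$, leaving an unoccupied gap $[S_{p-1}+1, M-1]$ of size $M - 1 - S_{p-1} \ge 1$. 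Car $p+1$, preferring $1$, then seeks the first available spot $S_{p-1}+1$ and requires $y_{p+1}$ contiguous spots there; if moreover $M \le S_{p-1} + y_{p+1}$, this needed block reaches the occupied spot $M$, producing a collision, so $\x^{(p)} \notin \PS_n(\y)$, contradicting invariance.

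Thus it suffices to exhibit some $p \in [n-1]$ with $M \in I_p := [\,S_{p-1}+2,\ S_{p-1}+y_{p+1}\,]$. This covering is the crux of the argument and is exactly where strict monotonicity enters. The interval $I_1 = [2, y_2]$ starts at $2$; consecutive intervals abut or overlap because $\max I_p - (\min I_{p+1} - 1) = y_{p+1} - y_p - 1 \ge 0$ exactly when $y_{p+1} > y_p$; and the union reaches $\max I_{n-1} = S_{n-2} + y_n = m - y_{n-1}$, which is at least $1 + m - y_n \ge M$ exactly when $y_n > y_{n-1}$. Each $I_p$ is moreover nonempty since $y_{p+1} \ge y_2 \ge 2$. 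Hence $\bigcup_{p=1}^{n-1} I_p$ contains every integer in $[2, 1 + m - y_n]$, in particular $M$, yielding the desired $p$ and the contradiction, so $M = 1$ and $\x = (1^n)$.

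I expect the interval-covering bookkeeping — verifying that strict monotonicity closes every gap between the $I_p$ and pushes the top of the union past the a priori bound $1 + m - y_n$ on $M$, together with checking that the collision genuinely occurs at car $p+1$ rather than earlier — to be the main obstacle; by contrast, the removal-lemma reduction to the two-value multiset $\{1^{n-1}, M\}$ and the base case are routine.
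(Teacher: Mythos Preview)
Your proof is correct. Note, however, that the paper does not itself prove Theorem~\ref{theorem: increasing}; it is quoted from Adeniran and Yan. The closest in-paper comparison is the proof of the parking-assortment analogue, Theorem~\ref{theorem: increasing length}, and your route differs from that one in an interesting way. After the removal-lemma reduction to the multiset $\{1^{n-1},M\}$ (which both arguments share in spirit), you exploit the collision failure mode specific to parking sequences: for each $M\in[2,\,1+m-y_n]$ you exhibit a position $p$ at which placing $M$ forces car $p+1$ to collide with car $p$, and the strict-increase hypothesis is precisely what makes the intervals $I_p=[S_{p-1}+2,\,S_{p-1}+y_{p+1}]$ cover that range without gaps. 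The paper's proof of Theorem~\ref{theorem: increasing length} instead passes through the minimally-invariant machinery (Theorem~\ref{theorem: mi} and Corollary~\ref{corollary: append}): it locates an index $i$ for which $(1^{i-1},w,1^{k-i})$ already fails on $\y_{\vert_k}$, and then argues that success on $\y$ would force $w=1+\sum_j y_j - y_i$, contradicting the Proposition~\ref{proposition: nondecreasing} bound. Your argument is more elementary and self-contained, but it leans on collisions and so does not port to assortments; the paper's argument is less direct but survives in the assortment setting where no collision mechanism exists. (One minor remark: the proof of Lemma~\ref{lemma: removal} does not actually invoke Proposition~\ref{proposition: nondecreasing}, so no replacement is needed there; the key point, as you say, is that the last car with the maximal preference must occupy the final $y_n$ spots, and your parenthetical justification for this is sound.)
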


We begin by noting that if $\PAINV_n(\y)=\{(1^n)\}$, then  $\PSINV_n(\y)=\{(1^n)\}$ as well. 
However, the converse is not true. 
For example, if $\y=(1,2,1)$, then  $\PSINV_3(\y)=\{(1,1,1)\}$, while 
\begin{align*}
    \PAINV_3(\y)=\{(1,1,1),(1,1,2),(1,2,1),(2,1,1),(1,1,3),(1,3,1),(3,1,1)\}.
\end{align*}
Moreover, we remark that while sufficient, $\y$ being strictly increasing is not a necessary condition for $\PAINV_n(\y) = \{(1^n)\}$.
For example, $\y = (1, 2, 2)$ is weakly increasing yet it satisfies $\PAINV_3(\y) = \{(1,1,1)\}$.
Hence, we state and prove a more general result (Theorem~\ref{theorem: mi}) which fully characterizes when $\y$ is minimally invariant. 
As a consequence of this result, Theorem~\ref{theorem: increasing length} yields the analogous result to Theorem~\ref{theorem: increasing} for parking assortments.

Next, we use Lemma~\ref{lemma: removal} to establish that if $\y=(y_1, y_2, \ldots,y_n) \in \N^n$ is minimally invariant and $y_{n + 1} \in \N$, then every  $\x\in\PAINV_{n + 1}((y_1, y_2, \ldots, y_n, y_{n+1}))$ 
has $n$ ones and one entry $w \in \N$.
This technical result is stated below and is instrumental in proving Theorem \ref{theorem: mi}.

\begin{corollary}
\label{corollary: append}
Let $\y = (y_1, y_2, \ldots, y_n) \in \N^n$, $y_{n + 1} \in \N$, and $\z = (y_1, y_2, \ldots, y_n, y_{n+1})$.
If $\y$ is minimally invariant, then every nondecreasing $\x \in \PAINV_{n+1}(\z)$ is of the form $\x = (1^n, w)$ for some $w \in \N$.
\end{corollary}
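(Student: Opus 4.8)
The plan is to apply Lemma~\ref{lemma: removal} to the nondecreasing $\x \in \PAINV_{n+1}(\z)$, exploiting the fact that in a nondecreasing list the largest entry is $x_{n+1}$, so $k = \argmax_{i \in [n+1]} x_i = n+1$. First I would invoke Lemma~\ref{lemma: removal} with $\y$ there being $\z$ and $n$ there being $n+1$; this gives $\z_{\widehat{n+1}} = (y_1, y_2, \ldots, y_n) = \y \in \PAINV_n(\z_{\vert_n}) = \PAINV_n(\y)$, but more importantly it gives that $\x_{\widehat{n+1}} = (x_1, x_2, \ldots, x_n) \in \PAINV_n(\y)$.

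The second and decisive step is to use the hypothesis that $\y$ is minimally invariant, i.e. $\PAINV_n(\y) = \{(1^n)\}$. Combined with the conclusion of the previous paragraph, this forces $(x_1, x_2, \ldots, x_n) = (1^n)$. Hence $\x = (1^n, x_{n+1})$, and setting $w := x_{n+1} \in \N$ (indeed $w \in [\,\sum_{i=1}^{n+1} y_i\,]$, but stating $w \in \N$ suffices for the corollary) yields $\x = (1^n, w)$, as desired.

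I expect essentially no obstacle here: the content is entirely packaged in Lemma~\ref{lemma: removal}, and the only thing to be careful about is the bookkeeping of indices, namely checking that for a \emph{nondecreasing} preference list the argmax can be taken to be the last coordinate $n+1$, so that removing the argmax coincides with removing the last entry and restricting $\z$ to its first $n$ coordinates recovers exactly $\y$. One minor subtlety worth a sentence: if there are ties for the maximum (e.g. $x_n = x_{n+1}$), $\argmax$ is not literally unique, but Lemma~\ref{lemma: removal}'s proof only uses that some car with a maximal preference is moved to the end, and choosing $k = n+1$ is a valid choice of argmax for a nondecreasing list; alternatively one notes that all rearrangements of $\x$ lie in $\PAINV_{n+1}(\z)$, so we are free to pick the representative with the maximum in the last slot. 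With $k = n+1$ fixed, the conclusion $\x_{\widehat{n+1}} \in \PAINV_n(\y)$ is immediate and the minimally invariant hypothesis closes the argument.
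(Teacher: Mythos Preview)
Your proposal is correct and follows essentially the same approach as the paper's own proof: apply Lemma~\ref{lemma: removal} with $k=n+1$ (valid since $\x$ is nondecreasing), deduce $\x_{\widehat{n+1}}\in\PAINV_n(\y)$, and then use minimal invariance of $\y$ to force $\x_{\widehat{n+1}}=(1^n)$. Your remark about ties in the $\argmax$ is a nice clarification that the paper leaves implicit.
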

\begin{proof}
Let $\x = (x_1, x_2, \ldots, x_n, x_{n+1}) \in\PAINV_{n+1}(\z)$ be nondecreasing, so $n+1 = \argmax_{i \in [n+1]} x_i$.
Note that $\y = \z_{\vert_{n}}$ by construction.
Then, by Lemma~\ref{lemma: removal}, $\x_{\widehat{n+1}} = (x_1, x_2, \ldots, x_n) \in \PAINV_{n}(\y)$.
Moreover, since $\y$ is minimally invariant, it must be that $\x_{\widehat{n+1}}  = (1^n)$. 
This can only happen if $\x$ is of the form $\x=(1^n,w)$ for some $w \in \N$. 
In particular, $w=x_{n+1}$.
\end{proof}

The following result further extends Corollary~\ref{corollary: append} by characterizing the possible values for $w \in \N$.
Although its statement is technical, the proof makes use of the following observation.
An invariant parking assortment with entries of all ones and a single $w$ implies that the cars in the queue before the car with preference $w$ park in sequential order beginning at the start of the street. Then the following two things can occur:
(1) the car with preference $w$ parks immediately after the previous cars, or (2) the car with preference $w$ parks leaving a gap between its position and that of the previous car. 
In the second case, we are ensured that there are some cars remaining (of appropriately small lengths) in order to fill in the gap of spaces left unoccupied by the car with preference $w$, as otherwise the preference list would not be a parking assortment, let alone invariant.  

\begin{corollary}
\label{corollary: gap}
Let $\y$, $\z$, and $\x = (1^n, w)$ be as in Corollary~\ref{corollary: append}.
For any such $w$, the following holds for all $i \in [n]$:
\begin{enumerate}[label=(\arabic*)]
    \item\label{it: gap (1)} $w \leq 1 + \sum_{j=1}^{i-1} y_j$, or
    \item there exists an increasing sequence $s_1, s_2, \ldots, s_m \in \{i+1,i+2,\ldots,n\}$ with
    \begin{enumerate}[label=(2\alph*)]
        \item\label{it: gap (2a)} $\sum_{\ell=1}^m y_{s_\ell}=(w - 1)-\sum_{j=1}^{i-1} y_j$, and
        \item\label{it: gap (2b)} $y_k > (w - 1)-\sum_{j=1}^{i-1} y_j - \sum_{j = 1}^\ell y_{s_j}$ for all $\ell \in [m]$ and $k \in \{s_{\ell - 1} + 1, s_{\ell - 1} + 2, \ldots, s_\ell - 1\}$, where $s_0 = i$.
    \end{enumerate}
\end{enumerate}
\end{corollary}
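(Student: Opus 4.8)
The plan is to analyze the parking process for the preference list $\x = (1^n, w)$ and the rearrangements of it that are most restrictive, exploiting that $\x \in \PAINV_{n+1}(\z)$ forces \emph{every} rearrangement to be a parking assortment for $\z$. First I would fix $i \in [n]$ and consider the rearrangement $\x'$ in which the single entry $w$ is placed in position $i+1$, i.e.\ $\x' = (1^{i}, w, 1^{n-i})$ where the entry $w$ is preceded by exactly $i$ ones and followed by $n-i$ ones (together with car $n+1$ of length $y_{n+1}$ somewhere — one must be careful about the bookkeeping of which car has which length, since the lengths are $\z=(y_1,\ldots,y_n,y_{n+1})$ and the preference list has $n+1$ entries; the natural choice is that the car with preference $w$ is car $i+1$, of length $y_{i+1}$, and the trailing ones correspond to cars of lengths $y_{i+2},\ldots,y_{n+1}$). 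By Proposition~\ref{proposition: nondecreasing}, the first $i$ cars (all with preference $1$) park contiguously in spots $1, 2, \ldots, \sum_{j=1}^{i} y_j$. Then the car with preference $w$ arrives.

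Next I would split into the two cases of the statement according to where the car with preference $w$ lands. If $w \leq 1 + \sum_{j=1}^{i-1} y_j$, we are in case~\ref{it: gap (1)} and there is nothing to prove for this $i$. Otherwise $w > 1 + \sum_{j=1}^{i-1} y_j$; since the first $i$ cars occupy spots $1$ through $\sum_{j=1}^{i} y_j$, the car with preference $w$ finds its first available spot at $1 + \sum_{j=1}^{i} y_j$ and parks in spots $1+\sum_{j=1}^i y_j, \ldots, \sum_{j=1}^{i} y_j + y_{i+1}$, leaving a gap $G$ consisting of the spots $1 + \sum_{j=1}^{i-1} y_j, \ldots, \sum_{j=1}^{i} y_j$ wait — more precisely the gap of \emph{unoccupied} spots strictly between the block of the first $i$ cars' rears and spot $w-1$; its size is $g := (w-1) - \sum_{j=1}^{i-1} y_j$ wait, I need to recompute: after $i$ cars we have filled $[1, \sum_{j=1}^i y_j]$, and the $w$-car sits starting at spot $1 + \sum_{j=1}^i y_j$ only if no earlier gap fits it. Actually the gap to be filled by later cars is the interval $[\,?,\,?\,]$; its size works out to $g = (w-1) - \sum_{j=1}^{i-1} y_j$ only after accounting correctly for car $i+1$'s own length — I would carefully track this so that the gap has exactly size $(w-1) - \sum_{j=1}^{i-1} y_j$ as claimed, recalling that the indices $s_\ell$ range in $\{i+1,\ldots,n\}$, i.e.\ they refer to the \emph{remaining} cars after the $w$-car. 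The gap must be filled exactly (no spot can be left empty at the end, since $m = \sum y_i$), and it can only be filled by the cars arriving after the $w$-car, namely the cars with lengths indexed in $\{i+1, i+2, \ldots, n\}$ wait, again indices: the trailing cars have lengths $y_{i+2}, \ldots, y_{n+1}$; re-indexing shifts this to a set of size $n-i$, matching $\{i+1,\ldots,n\}$ in the statement's convention. Let $s_1 < s_2 < \cdots < s_m$ be exactly those trailing cars that actually park inside the gap, in arrival order. Condition~\ref{it: gap (2a)} is then the statement that these cars fill the gap exactly, $\sum_\ell y_{s_\ell} = g$, which holds because the gap is isolated (bounded by occupied spots on the left and by the $w$-car on the right), so any car not parking in it parks entirely outside it, and the total must balance.

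For condition~\ref{it: gap (2b)} I would argue by tracking the remaining sub-gap. After cars $s_1, \ldots, s_\ell$ have parked in the gap, the still-unoccupied portion of the gap is a single interval of size $g - \sum_{j=1}^{\ell} y_{s_j} = (w-1) - \sum_{j=1}^{i-1}y_j - \sum_{j=1}^{\ell} y_{s_j}$, sitting at the right end of the original gap (since cars fill it left-to-right from the preference side — here I'd verify the geometry, that each $s_\ell$-car parks flush against the previously-parked block inside the gap, because its preference is $1$ so it takes the first contiguous block that fits). Now any intermediate car $k$ with $s_{\ell-1} < k < s_\ell$ did \emph{not} park in the gap, by definition of the $s_\ell$; since its preference is $1$ and the current sub-gap is the first maximal run of free spots, the only reason it skipped the gap is that it did not fit, i.e.\ $y_k$ exceeds the current sub-gap size $(w-1) - \sum_{j=1}^{i-1}y_j - \sum_{j=1}^{\ell-1} y_{s_j}$; but wait, the statement has $\sum_{j=1}^{\ell} y_{s_j}$, not $\ell-1$ — this is consistent because before $s_\ell$ parks, the gap size is the one with the $(\ell-1)$-sum, and $s_\ell$ by definition fits, so $y_{s_\ell} \le$ that size, hence the intermediate cars $k$, which don't fit, satisfy $y_k > $ (gap size after $s_\ell$ parks) is actually a \emph{weaker} statement; I would need to double check whether the intended bound is the tighter "before $s_\ell$" version or the "after" version as literally written, and prove whichever is stated — the "after" version follows a fortiori from not fitting before $s_\ell$ since the post-$s_\ell$ gap is smaller. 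The main obstacle I anticipate is purely the index/geometry bookkeeping: getting the re-indexing of the trailing cars right, confirming the gap is a single isolated interval filled left-to-right, and matching the exact form of the subtracted partial sums in~\ref{it: gap (2a)}–\ref{it: gap (2b)}; the conceptual content (isolated gap must be exactly tiled by later short-enough cars, and skipped cars were too long for the then-current sub-gap) is straightforward once the picture is pinned down.
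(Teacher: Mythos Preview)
Your approach is the same as the paper's: for each $i$, consider the rearrangement in which one specific car has preference $w$, let the preceding cars park contiguously from spot $1$, and argue that either no gap is created (condition (1)) or the gap must be exactly tiled by a subsequence of the later cars (condition (2a)), with skipped cars being too long for the current sub-gap (condition (2b)).

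There are, however, two concrete errors that are the source of all your ``wait, I need to recompute'' moments. First, the index is off by one: in the paper the preference $w$ is assigned to \emph{car $i$} (of length $y_i$), not car $i+1$. Then only cars $1,\ldots,i-1$ park before the $w$-car, filling $[1,\sum_{j=1}^{i-1}y_j]$, and the dichotomy in condition~(1) lines up immediately with no shifting. Second, and more importantly, when $w>1+\sum_{j=1}^{i-1}y_j$ the $w$-car does \emph{not} park at the first available spot after the block: a car begins looking at its \emph{preferred} spot $w$, finds it unoccupied (since only $[1,\sum_{j=1}^{i-1}y_j]$ is taken), and parks in $[w,\,w+y_i-1]$. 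The resulting gap is $[\,1+\sum_{j=1}^{i-1}y_j,\;w-1\,]$, of size exactly $(w-1)-\sum_{j=1}^{i-1}y_j$, with no ``accounting for car $i+1$'s own length'' or re-indexing required; the trailing cars that may fill it are then genuinely those with indices in $\{i+1,i+2,\ldots\}$. Once these two points are corrected, your argument for (2a)--(2b) --- the gap is tiled left-to-right by the $s_\ell$'s, and each intermediate car $k$ skipped the gap precisely because $y_k$ exceeded the then-current sub-gap --- is exactly the paper's. Your observation that the bound in (2b) as literally stated uses the post-$s_\ell$ sub-gap size, and hence follows a fortiori from the tighter pre-$s_\ell$ inequality you actually derive, is also correct.
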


\begin{proof}
We establish conditions on $w \in \N$ under which $\x =(1^n,w) \in \PAINV_{n+1}(\z)$.
For any such $w$, parking succeeds on every permutation of $\x = (1^n, w)$, which is to say parking succeeds regardless of which car has preference $w$.
Suppose car $i \in [n]$ has preference $w$.
Then, cars $1, 2, \ldots, i -1$ all prefer spot $1$ and park consecutively in the first $\sum_{l=1}^{i-1}y_i$ spots.

If condition \ref{it: gap (1)} holds, car $i$ parks immediately after car $i - 1$.
Similarly, cars $i + 1, i + 2, \ldots, n$, all of which prefer spot $1$, park immediately and consecutively after car $i$.
This shows parking succeeds whenever condition \ref{it: gap (1)} holds.
Suppose condition \ref{it: gap (1)} does not hold, which is to say $w > 1 + \sum_{j=1}^{i-1} y_j$.
In this case, after cars $1, 2, \ldots, i$ park, there is a gap (of unoccupied spots) of size $(w-1)-\sum_{j=1}^{i-1} y_j$ between the rear of car $i$ and the front of car $i - 1$.
Parking succeeds if and only if this gap is filled.
Note that this happens if and only if there is a sequence $s_1, s_2, \ldots, s_m \in \{i + 1, i + 2, \ldots, n\}$ of cars arriving after car $i$ whose sum of lengths fills the gap exactly (this is condition \ref{it: gap (2a)}), and such that there is no car outside the sequence that interferes with its consecutive parking.
The latter is to say that, for every $\ell \in [m]$, all cars $k \in \{s_{\ell - 1} + 1, s_{\ell - 1} + 2, \ldots, s_\ell - 1\}$ that arrive after car $s_{\ell - 1}$ but before car $s_\ell$, which we note are not part of the sequence, have length $y_k$ bigger than the remaining gap size $(w - 1)-\sum_{j=1}^{i-1} y_j - \sum_{j = 1}^\ell y_{s_j}$ at their time of arrival.
This is condition \ref{it: gap (2b)}.
\end{proof}

We now state and prove our main result.
\begin{theorem}
\label{theorem: mi}
Let $\y = (y_1, y_2, \ldots, y_n) \in \N^n$. Then, $\y$ is minimally invariant if and only if there does not exist $w \in \N_{> 1}$ such that $(1^{n-1},w) \in \PAINV_n(\y)$.
\end{theorem}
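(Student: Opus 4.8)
The plan is to prove the two implications separately. The forward direction is immediate: if $\y$ is minimally invariant then $\PAINV_n(\y)=\{(1^n)\}$, and since $(1^{n-1},w)\neq(1^n)$ for every $w>1$, no such $w$ can exist.

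For the reverse direction I would argue the contrapositive by induction on $n$: assuming $\y$ is \emph{not} minimally invariant, I will exhibit some $w\in\N_{>1}$ with $(1^{n-1},w)\in\PAINV_n(\y)$. The base case $n=1$ is vacuous, since every length-one list is minimally invariant. For the inductive step, I split according to whether the restriction $\y_{\vert_{n-1}}$ is minimally invariant. If it is, then since $\y$ itself is not, there is some $\x\in\PAINV_n(\y)\setminus\{(1^n)\}$, which we may assume nondecreasing (its sorted rearrangement lies in $\PAINV_n(\y)$ by invariance); then Lemma~\ref{lemma: removal} gives $\x_{\widehat{n}}\in\PAINV_{n-1}(\y_{\vert_{n-1}})=\{(1^{n-1})\}$ — equivalently, this is Corollary~\ref{corollary: append} with $\y_{\vert_{n-1}}$ in the role of $\y$ and $\y$ in the role of $\z$ — so $\x=(1^{n-1},w)$ with $w=x_n>1$, and we are done.

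If instead $\y_{\vert_{n-1}}$ is not minimally invariant, the inductive hypothesis yields $v\in\N_{>1}$ with $(1^{n-2},v)\in\PAINV_{n-1}(\y_{\vert_{n-1}})$, and I claim $(1^{n-1},v)\in\PAINV_n(\y)$, completing the proof with $w=v$. The key auxiliary fact is a monotonicity observation: if $k$ cars of total length $L$ successfully park on a street of length exactly $L$, then on any longer street they park in the identical spots (and hence occupy $[1,L]$), because each car always takes the leftmost admissible block of free spots, and lengthening the street only adds new free spots at the far right. Now any rearrangement of $(1^{n-1},v)$ equals $(1^{i-1},v,1^{n-i})$ for some $i\in[n]$. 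If $i<n$, the first $n-1$ cars receive a rearrangement of $(1^{n-2},v)$ with lengths $(y_1,\dots,y_{n-1})$, which is a parking assortment for $\y_{\vert_{n-1}}$; by the observation these cars fill $[1,\sum_{j=1}^{n-1}y_j]$ on the length-$m$ street, after which car $n$ (preference $1$, length $y_n$) fills the remaining $y_n$ spots $[\sum_{j=1}^{n-1}y_j+1,\,m]$. If $i=n$, the first $n-1$ cars all prefer spot $1$ and fill $[1,\sum_{j=1}^{n-1}y_j]$; since Proposition~\ref{proposition: nondecreasing} forces $v\le 1+\sum_{j=1}^{n-2}y_j\le\sum_{j=1}^{n-1}y_j$, spot $v$ is occupied and car $n$ again lands in $[\sum_{j=1}^{n-1}y_j+1,\,m]$. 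In every case all cars park, so $(1^{n-1},v)\in\PAINV_n(\y)$.

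The step I expect to demand the most care is the monotonicity observation. The point is that when a car parks at spot $q$ on the short street, no spot to the left of $q$ can become admissible on the longer street: at such a spot the length constraint is automatically satisfied once it was on the short street, so admissibility there is governed purely by occupancy, which is unchanged between the two runs; and $q$ itself remains admissible. An inner induction on the car index then shows the two runs agree car by car, and in particular that all $k$ cars occupy $[1,L]$. Granting this, the rest — reducing rearrangements of $(1^{n-1},v)$ to the $(n-1)$-car problem, and the bookkeeping in the minimally invariant case — is routine given Lemma~\ref{lemma: removal}, Corollary~\ref{corollary: append}, and Proposition~\ref{proposition: nondecreasing}.
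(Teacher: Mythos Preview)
Your proof is correct and follows essentially the same approach as the paper's: induction on $n$ with a case split on whether $\y_{\vert_{n-1}}$ is minimally invariant, invoking Lemma~\ref{lemma: removal}/Corollary~\ref{corollary: append} in the first case and Proposition~\ref{proposition: nondecreasing} plus a direct parking check in the second. Your version is marginally cleaner in two respects: you fold the $n=2$ case into the induction (so no forward reference to Theorem~\ref{theorem: invariant pair} is needed), and you make explicit the monotonicity observation that the paper leaves implicit when it asserts that ``the first $k$ cars occupy the first $\sum_{i=1}^k y_i$ spots.''
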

\begin{proof}
Note that if $\y$ is minimally invariant, then by definition, $\PAINV_n(\y) = \{(1^n)\}$, meaning there does not exist $w \in \N_{> 1}$ such that $(1^{n-1},w) \in \PAINV_n(\y)$.

We now prove that if there does not exist $w \in \N_{> 1}$ such that $(1^{n-1},w) \in \PAINV_n(\y)$, then $\y$ is minimally invariant. We prove the contrapositive: if $\y$ is not minimally invariant, then there exists $w\in \N_{>1}$ such that $(1^{n-1},w) \in \PAINV_n(\y)$.
We proceed by induction on $n$. 
If $n=1$, the statement holds vacuously as the only possible parking assortment is $(1)$, which implies that $\y$ is minimally invariant and that the assumption is false.
If $n=2$, in Theorem~\ref{theorem: invariant pair} we show from first principles that $\y=(y_1,y_2)$ is not minimally invariant if and only if $y_1 \geq y_2$. 
In particular, we show that $(1,1 + y_2) \in \PAINV_2(\y)$ whenever $y_1 \geq y_2$, so we may take $w = 1 + y_2 > 1$.

Now, assume the statement holds when $n = k$ for some $k \in \N$.
We need to show that the statement holds when $n = k + 1$, which is to say that if $\y=(y_1,y_2,\ldots,y_{k+1}) \in \N^{k+1}$ is not minimally invariant, then there exists $w \in \N_{>1}$ such that $(1^k,w) \in \PAINV_{k+1}(\y)$.
Let $\y_{\vert_{k}} = (y_1, y_2, \ldots, y_k)$.
There are two mutually exclusive possibilities:
\begin{itemize}
    \item[]
    Case 1:
    Suppose $\y_{\vert_{k}}$ is minimally invariant.
    By Corollary~\ref{corollary: append}, every nondecreasing $\x \in \PAINV_{k+1}(\y)$ is of the form $(1^k, w)$ for some $w \in \N$.
    However, by assumption, $\y$ is not minimally invariant, and so there must exist such a $w$ with $w > 1$.
    \item[] 
    Case 2:
    Suppose $\y_{\vert_{k}}$ is not minimally invariant.
    By the inductive hypothesis, there exists $w \in \N_{> 1}$ such that $(1^{k-1}, w) \in \PAINV_{k}(\y_{\vert_k})$.
    Then, 
    \begin{align*}
        w \leq 1 + \sum_{i=1}^{k-1} y_i < 1 + \sum_{i=1}^{k} y_i,
    \end{align*}
    where the first inequality follows from $\PAINV_{k}(\y_{\vert_k}) \subseteq \PA_{k}(\y_{\vert_k})$, Proposition~\ref{proposition: nondecreasing} and the fact that $(1^{k-1}, w)$ is nondecreasing.
    The second inequality, together with Proposition~\ref{proposition: nondecreasing}, further implies $(1^{k}, w) \in \PA_{k+1}(\y)$.
    Lastly, note that any non-trivial permutation of $(1^k, w)$ places $w$ among the first $k$ entries.
    Since $(1^{k-1}, w) \in \PAINV_{k}(\y_{\vert_{k}})$, under any such permutation, the first $k$ cars occupy the first $\sum_{i=1}^k y_i$ spots, leaving the last spots numbered $1 + \sum_{i=1}^k y_i, \ldots, \sum_{i=1}^{k+1} y_i$ unoccupied for car $k+1$ (which prefers spot $1$ and has length $y_{k+1}$) to fill.
\end{itemize}
This shows $(1^k, w) \in \PAINV_{k+1}(\y)$ for some $w \in \N_{> 1}$ in either case.
\end{proof}
\begin{remark}
Theorem~\ref{theorem: mi} is a concise characterization for $\y \in \N^n$ to be minimally invariant in the sense that there are only $n$ distinct permutations of $(1^{n-1}, w)$ and, by Proposition~\ref{proposition: nondecreasing}, we only need to check $w \in \N$ in the range $1 < w \leq 1 + \sum_{j=1}^{i-1} y_j$.
It is a pseudopolynomial-time characterization since the number of parking experiment calls depends on $\sum_{i=1}^n y_i$.
\end{remark}

As a first corollary to Theorem~\ref{theorem: mi}, we find that if a car length list is minimally invariant, then so is its restriction to the first $i \in [n]$ entries.
\begin{corollary}
\label{corollary: restriction}
If $\y \in \N^n$ is minimally invariant, then $\y_{\vert_i}$ is minimally invariant for all $i \in [n]$.
\end{corollary}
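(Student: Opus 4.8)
The plan is to prove the contrapositive: if $\y_{\vert_i}$ is \emph{not} minimally invariant for some $i \in [n]$, then $\y$ is not minimally invariant either. By Theorem~\ref{theorem: mi}, it suffices to produce $w \in \N_{>1}$ with $(1^{n-1}, w) \in \PAINV_n(\y)$. So I would start from the hypothesis that there exists $i \in [n]$ and $w \in \N_{>1}$ with $(1^{i-1}, w) \in \PAINV_i(\y_{\vert_i})$; here I am again using Theorem~\ref{theorem: mi} (applied to the shorter list $\y_{\vert_i}$) to witness non-minimal-invariance via an all-ones-and-one-$w$ preference list. Note $i \geq 2$ necessarily, since for $i = 1$ the only preference list is $(1)$.

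Next, the goal is to ``pad'' this short witness with ones. Concretely, I claim $(1^{n-1}, w) \in \PAINV_n(\y)$ with the \emph{same} $w$. First, $(1^{i-1}, w)$ nondecreasing in $\PA_i(\y_{\vert_i})$ forces $w \leq 1 + \sum_{j=1}^{i-1} y_j$ by Proposition~\ref{proposition: nondecreasing}; since $\sum_{j=1}^{i-1} y_j \leq \sum_{j=1}^{n-1} y_j$, the nondecreasing list $(1^{n-1}, w)$ satisfies the hypothesis of Proposition~\ref{proposition: nondecreasing} for $\y$ as well, so $(1^{n-1}, w) \in \PA_n(\y)$. That handles the sorted preference list; the real content is permutation invariance. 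Here I would argue exactly as in Case~2 of the proof of Theorem~\ref{theorem: mi}: take any permutation of $(1^{n-1}, w)$. If $w$ lands in position $j \leq i$ among the cars, the relevant prefix looks like a permutation of $(1^{i-1}, w)$ followed by more $1$'s; invariance of $(1^{i-1}, w)$ for $\y_{\vert_i}$ guarantees the first $i$ cars fill spots $1, \ldots, \sum_{j=1}^i y_j$ contiguously, and then cars $i+1, \ldots, n$ (all preferring spot $1$, remaining lengths $y_{i+1}, \ldots, y_n$) fill the rest in order by the same inductive argument as in Proposition~\ref{proposition: nondecreasing}. If instead $w$ lands in position $j > i$, then cars $1, \ldots, j-1$ all prefer spot $1$ and occupy spots $1, \ldots, \sum_{\ell=1}^{j-1} y_\ell$; since $w \leq 1 + \sum_{\ell=1}^{i-1} y_\ell \leq \sum_{\ell=1}^{j-1} y_\ell$ (using $j - 1 \geq i$), car $j$'s preference points into already-filled territory, so it parks immediately after car $j-1$, and the remaining all-ones cars follow suit. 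In every case all cars park, so $(1^{n-1}, w) \in \PAINV_n(\y)$.

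Finally, invoking Theorem~\ref{theorem: mi} once more, the existence of such $w > 1$ shows $\y$ is not minimally invariant, completing the contrapositive. An alternative, perhaps cleaner, route is a direct induction peeling off one entry at a time: it suffices to show that if $\y_{\vert_{n-1}}$ is minimally invariant then $\y$ is (this is essentially the restatement needed to reduce to a single step), which is precisely Case~1 of the proof of Theorem~\ref{theorem: mi} combined with Corollary~\ref{corollary: append}; iterating gives the general statement. I expect the main (though modest) obstacle to be bookkeeping in the permutation-invariance check — specifically making rigorous the claim that once a contiguous block of spots is filled by the first batch of cars, the trailing length-$1$ cars behave exactly as in the base parking argument — but this is entirely parallel to arguments already carried out in the proofs of Proposition~\ref{proposition: nondecreasing} and Theorem~\ref{theorem: mi}, so no new ideas are required.
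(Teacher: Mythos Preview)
Your proposal is correct and follows essentially the same approach as the paper: prove the contrapositive by invoking Theorem~\ref{theorem: mi} to obtain a witness $w \in \N_{>1}$ with $(1^{i-1},w) \in \PAINV_i(\y_{\vert_i})$, then pad with ones and verify $(1^{n-1},w) \in \PAINV_n(\y)$ by a case analysis on the position of $w$. The only organizational difference is that the paper carries out the single step $i = n-1 \to n$ and then inducts downward, whereas you jump directly from an arbitrary $i$ to $n$ in one go (and you even flag the inductive alternative yourself); both routes rest on the same two observations---Proposition~\ref{proposition: nondecreasing} for the sorted case and the ``first block fills contiguously, trailing ones append'' argument for the remaining permutations.
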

\begin{proof}
If $i = n$, the statement holds since $\y|_n=\y$ and since by assumption $\y$ is minimally invariant. 
We prove the contrapositive for $i=n-1$. 
That is, we show that if $\y_{\vert_{n-1}}$ is not minimally invariant, then $\y$ is not minimally invariant. 
Suppose $\y_{\vert_{n-1}}$ is not minimally invariant.
Then, by Theorem~\ref{theorem: mi}, there exists $w \in \N_{> 1}$ such that $(1^{n-2},w)\in\PAINV_{n-1}(\y_{\vert_{n-1}})$. 
We claim that $\x = (1^{n-1},w) \in \PAINV_n(\y)$.
To see this, consider the possible permutations $\x'$ of $\x$:
\begin{itemize}
    \item[]
    Case 1:
    Suppose $\x' = (1^{n-1}, w)$. 
    Since $(1^{n-2},w)\in\PAINV_{n-1}(\y_{\vert_{n-1}}) \subseteq \PA_{n-1}(\y_{\vert_{n-1}})$ is nondecreasing, Proposition~\ref{proposition: nondecreasing} implies that
    \begin{align*}
        w \leq 1 + \sum_{i=1}^{n-2} y_i < 1 + \sum_{i=1}^{n-1} y_i.
    \end{align*}
    The second inequality, together with Proposition~\ref{proposition: nondecreasing}, further implies $\x' \in \PA_{n}(\y)$.
    \item[]
    Case 2:
    Suppose $\x' \neq (1^{n-1}, w)$.
    Then, $w$ lies among the first $n-1$ entries of $\x'$.
    Since $(1^{n-2}, w) \in \PAINV_{n-1}(\y_{\vert_{n-1}})$, the first $n - 1$ cars under $\x'$ occupy the first $\sum_{i=1}^{n-1} y_i$ spots, leaving spots
    \begin{align*}
        1 + \sum_{i=1}^{n-1} y_i, \ldots, \sum_{i=1}^{n} y_i
    \end{align*}
    unoccupied for car $n$ (which prefers spot $1$ and has length $y_n$) to fill.
    This shows $\x' \in \PA_{n}(\y)$ in this case.
\end{itemize}
These cases establish that $\x = (1^{n-1},w) \in \PAINV_n(\y)$, which implies $\y$ is not minimally invariant.
The claim for $i \in [n- 2]$ follows by applying the argument inductively. 
In particular, if we assume that $\y_{\vert_{i}}$ is minimally invariant for some $i \in \{2, 3, \ldots, n\}$, the same argument implies that $\y_{\vert_{i - 1}}$ is minimally invariant.
\end{proof}

The next result unfolds what must take place in the parking experiments specified in Theorem~\ref{theorem: mi} so that the list of cars length is minimally invariant. 
\begin{corollary}
\label{corollary: mi (alternate)}
Let $\y = (y_1, y_2, \ldots, y_n) \in \N^n$. Then, $\y$ is minimally invariant if and only if
for every $w \in \{2, 3, \ldots, \sum_{i=1}^n y_i\}$, there exists $i \in [n]$ such that both of the following hold:
\begin{enumerate}[label=(\arabic*)]
    \item\label{it: mi (alternate) (1)} $w > 1 + \sum_{j=1}^{i-1} y_j$, and
    \item\label{it: mi (alternate) (2)} there does not exist an increasing sequence $s_1, s_2, \ldots, s_m \in \{i+1,i+2,\ldots,n\}$ with
    \begin{enumerate}[label=(2\alph*)]
        \item\label{it: mi (alternate) (2a)} $\sum_{\ell=1}^m y_{s_\ell}=(w - 1)-\sum_{j=1}^{i-1} y_j$, and
        \item\label{it: mi (alternate) (2b)} $y_k > (w - 1)-\sum_{j=1}^{i-1} y_j - \sum_{j = 1}^\ell y_{s_j}$ for all $\ell \in [m]$ and $k \in \{s_{\ell - 1} + 1, s_{\ell - 1} + 2, \ldots, s_\ell - 1\}$, where $s_0 = i$.
    \end{enumerate}
\end{enumerate}
\end{corollary}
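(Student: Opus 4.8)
The plan is to combine Theorem~\ref{theorem: mi} with the gap analysis carried out in the proof of Corollary~\ref{corollary: gap}. By Theorem~\ref{theorem: mi}, $\y$ is minimally invariant if and only if $(1^{n-1},w)\notin\PAINV_n(\y)$ for every $w\in\N_{>1}$; and since every preference list lies in $[m]^n$ with $m=\sum_{i=1}^n y_i$, the only values of $w$ that need to be checked are $w\in\{2,3,\ldots,\sum_{i=1}^n y_i\}$. So it suffices to show that, for each such $w$, we have $(1^{n-1},w)\notin\PAINV_n(\y)$ if and only if there is some $i\in[n]$ for which both \ref{it: mi (alternate) (1)} and \ref{it: mi (alternate) (2)} hold.

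First I would fix $w$ in this range and note that the distinct rearrangements of $(1^{n-1},w)$ are exactly the lists $(1^{i-1},w,1^{n-i})$ for $i\in[n]$, so $(1^{n-1},w)\in\PAINV_n(\y)$ if and only if parking succeeds under each of these $n$ lists. Next I would analyze a single list $(1^{i-1},w,1^{n-i})$, following the reasoning in the proof of Corollary~\ref{corollary: gap}: cars $1,\ldots,i-1$ prefer spot $1$ and occupy spots $1,\ldots,\sum_{j=1}^{i-1}y_j$; car $i$ has preference $w$, so if $w\le 1+\sum_{j=1}^{i-1}y_j$ it parks flush behind car $i-1$ and then cars $i+1,\ldots,n$ fill the street consecutively, so parking succeeds; whereas if $w>1+\sum_{j=1}^{i-1}y_j$, car $i$ parks at spot $w$ and leaves a gap of size $(w-1)-\sum_{j=1}^{i-1}y_j$ which must be filled exactly by some of cars $i+1,\ldots,n$ arriving after it. As in Corollary~\ref{corollary: gap}, that gap gets filled precisely when there is an increasing subsequence $s_1,\ldots,s_m$ of $\{i+1,\ldots,n\}$ satisfying \ref{it: mi (alternate) (2a)} and, so that the intervening cars do not block it, \ref{it: mi (alternate) (2b)}. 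Consequently, parking \emph{fails} under $(1^{i-1},w,1^{n-i})$ if and only if both \ref{it: mi (alternate) (1)} and \ref{it: mi (alternate) (2)} hold for that $i$.

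Assembling these facts: $(1^{n-1},w)\notin\PAINV_n(\y)$ if and only if parking fails under $(1^{i-1},w,1^{n-i})$ for at least one $i\in[n]$, which by the previous step is exactly the existence of $i\in[n]$ satisfying \ref{it: mi (alternate) (1)} and \ref{it: mi (alternate) (2)}; quantifying over $w$ and invoking Theorem~\ref{theorem: mi} yields the claimed equivalence. I expect the only delicate point to be the bookkeeping for the ``out of range'' values $1+\sum_{j=1}^{n-1}y_j<w\le\sum_{i=1}^n y_i$, for which $(1^{n-1},w)$ is not even a parking assortment by Proposition~\ref{proposition: nondecreasing}: this is absorbed uniformly into the case $i=n$, where \ref{it: mi (alternate) (1)} reads $w>1+\sum_{j=1}^{n-1}y_j$ and \ref{it: mi (alternate) (2)} holds vacuously because $\{i+1,\ldots,n\}=\emptyset$ while the gap size $(w-1)-\sum_{j=1}^{n-1}y_j$ is positive. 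Aside from this, the argument is essentially a transcription of Corollary~\ref{corollary: gap} read through the negation ``success condition fails $\leftrightarrow$ failure condition holds,'' so no genuinely new idea is required.
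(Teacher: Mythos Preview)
Your proposal is correct and follows essentially the same approach as the paper: both invoke Theorem~\ref{theorem: mi} to reduce to $(1^{n-1},w)\notin\PAINV_n(\y)$ for all relevant $w$, restrict $w$ to $\{2,\ldots,\sum_i y_i\}$, and then negate the gap-filling characterization of Corollary~\ref{corollary: gap}. Your treatment is slightly more explicit than the paper's (which simply cites Corollary~\ref{corollary: gap} for the final step), in particular your handling of the $i=n$ edge case, but the argument is the same.
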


\begin{proof}
By Theorem~\ref{theorem: mi}, $\y$ is minimally invariant if and only if $(1^{n-1},w)\not\in\PAINV_n(\y)$ for all $w\neq 1.$
Equivalently, $\y$ is minimally invariant if and only if $(1^{n-1},w)\not\in\PAINV_n(\y)$ for all $w\in\{2,3,\ldots,\sum_{i=1}^n y_i\}$ (parking always fails when $w>\sum_{i=1}^n y_i$, since these choices of $w$ are preferences outside the parking lot). By Corollary~\ref{corollary: gap}, this is equivalent to saying that for all $w\in\{2,3,\ldots,\sum_{i=1}^n y_i\},$ there exists $i\in[n]$ such that \ref{it: mi (alternate) (1)} and \ref{it: mi (alternate) (2)} hold.
\end{proof}

As another application of our characterization, we provide an analogous
result to Theorem~\ref{theorem: increasing} for parking assortments. 
\begin{theorem}
\label{theorem: increasing length}
Let $\y = (y_1, y_2, \ldots,y_n) \in \N^n$ be strictly increasing.
Then, $\PAINV_n(\y) = \{(1^n)\}$.
\end{theorem}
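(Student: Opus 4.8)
The plan is to invoke Theorem~\ref{theorem: mi}, which reduces the claim to showing that there is no $w \in \N_{>1}$ with $(1^{n-1}, w) \in \PAINV_n(\y)$; equivalently, I would verify the condition of Corollary~\ref{corollary: mi (alternate)}. So fix $w \in \{2, 3, \ldots, \sum_{i=1}^n y_i\}$ and let $i \in [n]$ be the \emph{largest} index with $w > 1 + \sum_{j=1}^{i-1} y_j$; such an $i$ exists because $w \geq 2 > 1 = 1 + \sum_{j=1}^{0} y_j$. This is precisely the index for which, in the permutation of $(1^{n-1},w)$ that assigns preference $w$ to car $i$, cars $1, \ldots, i-1$ fill spots $1, \ldots, \sum_{j=1}^{i-1} y_j$ and car $i$ then parks with its rear bumper in spot $w$, leaving an unfilled gap of size $g := (w-1) - \sum_{j=1}^{i-1} y_j$ behind it.

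Next I would pin down the size of the gap: clearly $g \geq 1$ by the defining inequality for $i$, and I would show $g \leq y_i$. If $i < n$, the maximality of $i$ gives $w \leq 1 + \sum_{j=1}^{i} y_j$, whence $g \leq y_i$. If $i = n$, then $g = (w-1) - \sum_{j=1}^{n-1} y_j \leq \left(\sum_{j=1}^n y_j\right) - 1 - \sum_{j=1}^{n-1} y_j = y_n - 1 < y_n$. In either case $1 \leq g \leq y_i$.

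Now the strictly increasing hypothesis does the rest. Every index $s \in \{i+1, i+2, \ldots, n\}$ has $y_s \geq y_{i+1} > y_i \geq g$. Hence any nonempty increasing sequence $s_1 < s_2 < \cdots < s_m$ drawn from $\{i+1, \ldots, n\}$ satisfies $\sum_{\ell=1}^m y_{s_\ell} \geq y_{s_1} > g$, while the empty sequence has sum $0 < g$; in particular, no such sequence can satisfy condition \ref{it: mi (alternate) (2a)}. Thus for this $w$ the index $i$ witnesses both \ref{it: mi (alternate) (1)} and \ref{it: mi (alternate) (2)} of Corollary~\ref{corollary: mi (alternate)}. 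Since $w$ was arbitrary, Corollary~\ref{corollary: mi (alternate)} yields that $\y$ is minimally invariant, i.e., $\PAINV_n(\y) = \{(1^n)\}$.

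As for difficulty: there is essentially no obstacle once Theorem~\ref{theorem: mi} (equivalently Corollary~\ref{corollary: mi (alternate)}) is available — the only points needing care are the boundary case $i = n$ in the bound $g \leq y_i$, and the choice of $i$ as \emph{maximal} (rather than minimal), which is exactly what makes the gap fit behind car $i$ yet be too small for every later, hence strictly longer, car. If one preferred to bypass Corollary~\ref{corollary: mi (alternate)}, this same $i$ also gives a direct argument: the permutation placing $w$ on car $i$ strands the $g$-spot gap permanently (no remaining car, all of length $> g$, can occupy it), so fewer than $m = \sum_{j=1}^n y_j$ spots ever get filled and some car fails to park, contradicting invariance of $(1^{n-1},w)$.
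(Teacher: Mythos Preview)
Your proof is correct and is genuinely more direct than the paper's. The paper proceeds by a separate induction on $n$: assuming $\y_{\vert_k}$ is minimally invariant, it invokes Corollary~\ref{corollary: append} to reduce to $(1^k,w)$, then uses the inductive hypothesis to locate a bad index $i$ for $\y_{\vert_k}$, and finally runs a somewhat delicate case analysis (showing that if parking under $(1^{i-1},w,1^{k-i+1})$ were to succeed, one would be forced into $w = (\sum_{j=1}^{k+1} y_j) - y_i + 1$, which violates the a priori bound on $w$). You instead apply Corollary~\ref{corollary: mi (alternate)} in one shot: choosing $i$ \emph{maximal} with $w > 1 + \sum_{j<i} y_j$ immediately pins the gap $g$ in $[1,y_i]$, after which strict monotonicity kills condition~\ref{it: mi (alternate) (2a)} outright. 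This avoids the second induction entirely and isolates the single inequality $y_s > y_i \geq g$ that does all the work.

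Two minor remarks. First, in the boundary case $i=n$ your narrative sentence ``car $i$ then parks with its rear bumper in spot $w$'' is not literally true (car $n$ cannot fit past $w$ at all), but your formal verification of \ref{it: mi (alternate) (1)} and \ref{it: mi (alternate) (2)} is unaffected since $\{n+1,\ldots,n\}=\varnothing$ and $g\geq 1$; the bound $g\leq y_n$ you derive there is in fact unnecessary. Second, your closing ``direct'' alternative (bypassing Corollary~\ref{corollary: mi (alternate)}) is exactly in the spirit of the paper's argument, and would likewise need the $i=n$ case handled as ``car $n$ fails to fit'' rather than ``a gap is stranded.''
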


\begin{proof}
We prove this by induction on $n$.
If $n=1$, the claim holds trivially. 
Now, assume the statement holds when $n = k$ for some $k \in \N$.
We need to show that the statement holds when $n = k + 1$.
Let $\y = (y_1, y_2, \ldots, y_k, y_{k+1}) \in \N^{k+1}$ be strictly increasing.
Note that $\y_{\vert_{k}} = (y_1, y_2, \ldots, y_k)$ is also strictly increasing and therefore, by the inductive hypothesis, minimally invariant.
Then, by Corollary~\ref{corollary: append}, all nondecreasing $\x \in \PAINV_{k+1}(\y)$ are of the form $\x = (1^k, w)$ for some $w \in \N$. 

Suppose, by way of contradiction, that $\x = (1^k, w) \in \PAINV_{k+1}(\y)$ with $w \in \N_{> 1}$.
First, note that since $\PAINV_{k+1}(\y) \subseteq \PA_{k+1}(\y)$ and $\x$ is nondecreasing, Proposition~\ref{proposition: nondecreasing} implies
\begin{align}
    w \leq 1 + \sum_{j=1}^{k} y_j.\label{eq:to contradict}
\end{align}
In what follows, we will show that there is a contradiction arising due to the inequality in \eqref{eq:to contradict}.

By the inductive hypothesis we know that $\y_{\vert_k}$ is minimally invariant, hence for some $i \in [k]$, $\x'=(1^{i-1},w,1^{k-i})$ fails to park the cars given $\y_{\vert_k}$.
Now let $\x''=(1^{i-1},w,1^{k-i+1})$ be $\x'$ with an additional $1$ added at the end. 
Note that this is a rearrangement of the original preference list $\x=(1^k,w)$.
Moreover, note that, under $\x''$, there is 
a subset of the first $\sum_{i=1}^k y_i$ spots that remains unoccupied after the arrival of car $k$. 

If parking fails under $\x''$, we are done.
Now assume parking succeeds under $\x''$.
We will shortly show that this can only happen if $w=(\sum_{j=1}^{k+1} y_j) - y_i + 1$.
This then yields a contradiction to inequality \eqref{eq:to contradict} since 
\[w=\left(\sum_{j=1}^{k+1} y_j\right) - y_i + 1 \nleq 1 + \sum_{j=1}^{k} y_j\]
with $i \in [k]$ and $\y$ strictly increasing. 

Let us now prove our claim that $w=(\sum_{j=1}^{k+1} y_j) - y_i + 1$. 
First, suppose $w>(\sum_{j=1}^{k+1} y_j) - y_i+1.$ Then parking fails under $(1^k,w),$ since the number of spots to the inclusive right of $w$ is $(\sum_{j=1}^{k+1} y_j) - w + 1 < (\sum_{j=1}^{k+1} y_j) - (\sum_{j=1}^{k+1} y_j - y_i+1) + 1=y_i<y_{k+1},$ so car $k+1$ is unable to park.

Now, suppose that $w<(\sum_{j=1}^{k+1} y_j) - y_i+1$. 
Then after car $i$ with preference $w$ parks, it leaves a gap (of consecutive unoccupied spots) to its left and another to its right. 
We now show that the left gap will be completely filled after cars $1,2,\ldots,k$ have parked. 
Suppose car $j\in\{i+1, i + 2, \ldots, k\}$ parks in the right gap, and let $g$ be the size of the left gap at this point in the parking experiment. 
If $g=0,$ then the gap has been filled, so assume $g>0$.
Since car $j$ has preference 1, it tried and failed to park in the left gap, meaning $y_j>g$.
But since $\y$ is strictly increasing, the remaining cars will also be too large to park in the left gap, so parking will fail, a contradiction. 

So some subset of the cars $i+1, i + 2, \ldots,k$ fills the left gap, and the rest park at the beginning of the right gap consecutively (since they have preference 1), leaving the last $y_{k+1}$ spaces empty. 
But this contradicts that the first $k$ cars leave a gap in spaces $1, 2, \ldots, \sum_{j=1}^k y_j$. 

Thus we must have $w=(\sum_{j=1}^{k+1}y_j)-y_i+1.$ 
This choice of $w$ gives a contradiction, so $(1^k,w)\not\in\PAINV_{k+1}(\y)$. This shows that $(1^k,w)\not\in\PAINV_{k+1}(\y)$ for all $w\in\mathbb N_{>1}$. Thus, $\y$ is minimally invariant by Theorem~\ref{theorem: mi}. This completes the inductive step and proves the claim.
\end{proof}

\section{Two and Three Cars}
\label{section: two and three cars}

\subsection{Two Cars}
We now restrict $\y = (y_1, y_2) \in\N^2$. 
Our main result gives a characterization of invariant parking assortments with two cars (Theorem~\ref{theorem: invariant pair}).
As an immediate corollary (Corollary~\ref{corollary: mi pair}), we obtain a full characterization for minimally invariant parking assortments with two cars.
In addition, we provide an alternate proof of the Corollary~\ref{corollary: mi pair} using Theorem~\ref{theorem: mi}.
\begin{theorem}
\label{theorem: invariant pair}
Let $\y=(y_1, y_2)\in\N^2$.
\begin{enumerate}
    \item If $y_1 < y_2$, then $\PAINV_2(\y) = \{(1,1)\}$.
    \item If $y_1 \geq y_2$, then $\PAINV_2(\y) = \{(1,1), (1, y_2 + 1), (y_2 + 1, 1)\}$.
\end{enumerate}
\end{theorem}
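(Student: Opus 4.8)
The plan is to characterize $\PAINV_2(\y)$ by leveraging Proposition~\ref{proposition: nondecreasing} together with the fact that a list is in $\PAINV_2(\y)$ if and only if both of its two arrangements park successfully. Since $\y$ has only two entries, the street has $m = y_1 + y_2$ spots, and any preference list $\x = (a, b)$ has exactly two rearrangements: $(a,b)$ and $(b,a)$. I would first note that invariance forces us to only check nondecreasing representatives: $\x \in \PAINV_2(\y)$ iff its sorted version $\x^{\uparrow}$ satisfies that \emph{both} $\x^{\uparrow}$ and its reversal park. So without loss of generality write the sorted list as $(p, q)$ with $1 \le p \le q \le m$, and the task reduces to determining for which pairs $(p,q)$ both $(p,q)$ and $(q,p)$ are parking assortments for $\y$.

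Next I would apply Proposition~\ref{proposition: nondecreasing} to the nondecreasing list $(p,q)$: it parks iff $p \le 1$ and $q \le 1 + y_1$, i.e. $p = 1$ and $q \le 1 + y_1$. So every candidate invariant list has the form $(1, q)$ with $1 \le q \le 1 + y_1$. It remains to determine when the \emph{reversed} list $(q, 1)$ also parks. I would then analyze the parking experiment for $(q,1)$ directly: car $1$ has length $y_1$ and preference $q$; it parks in spots $q, q+1, \ldots, q + y_1 - 1$, which requires $q + y_1 - 1 \le m = y_1 + y_2$, i.e. $q \le y_2 + 1$ — automatically satisfied when $q \le 1 + y_1$ only if $y_1 \le y_2$, so this is one place the two cases separate. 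Assuming car $1$ parks at $[q, q+y_1-1]$, it leaves a left gap of $q - 1$ empty spots (spots $1, \ldots, q-1$) and a right gap of $m - (q + y_1 - 1) = y_2 - q + 1$ empty spots. Car $2$ has length $y_2$ and preference $1$: it will try the left gap first, and fits there iff $y_2 \le q - 1$; otherwise it moves past car $1$ into the right gap, fitting iff $y_2 \le y_2 - q + 1$, i.e. $q \le 1$, i.e. $q = 1$.

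Putting this together: for $q = 1$ the reversed list $(1,1)$ trivially parks, giving $(1,1) \in \PAINV_2(\y)$ always. For $q \ge 2$, the reversed list $(q,1)$ parks iff car $1$ fits (i.e. $q \le y_2 + 1$) \emph{and} car $2$ fits in the left gap (i.e. $q - 1 \ge y_2$, i.e. $q \ge y_2 + 1$); these force $q = y_2 + 1$. Finally I must reconcile this with the constraint $q \le 1 + y_1$ coming from the nondecreasing direction: $q = y_2 + 1 \le y_1 + 1$ holds iff $y_2 \le y_1$. Hence if $y_1 < y_2$, the only surviving value is $q = 1$, giving $\PAINV_2(\y) = \{(1,1)\}$; if $y_1 \ge y_2$, both $q = 1$ and $q = y_2 + 1$ survive, and unpacking ``sorted list $(1, y_2+1)$ is invariant'' into its rearrangements yields $\PAINV_2(\y) = \{(1,1), (1, y_2+1), (y_2+1, 1)\}$. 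The main obstacle — really a bookkeeping point rather than a deep one — is being careful that invariance is a statement about the \emph{set} of rearrangements, so I should phrase the argument as: a list is invariant iff its sorted form satisfies Proposition~\ref{proposition: nondecreasing} \emph{and} every other rearrangement parks; with $n = 2$ there is only one other rearrangement, which keeps the casework short, but I want to make sure I do not conflate the list $(1, y_2+1)$ with the list $(y_2+1, 1)$ when I report the final set.
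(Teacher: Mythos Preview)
Your proof is correct and follows essentially the same approach as the paper: reduce to the nondecreasing representative via Proposition~\ref{proposition: nondecreasing} to force $p=1$ and $q\le 1+y_1$, then determine when the reversed list $(q,1)$ parks by comparing $y_2$ to the left and right gaps around car~1. The only organizational difference is that you parametrize by $q$ and let the case split $y_1<y_2$ versus $y_1\ge y_2$ emerge at the end, whereas the paper treats the two cases separately from the outset; the underlying computations are identical.
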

\begin{proof}[Proof]
Clearly, $\{(1,1)\} \subseteq \PAINV_2(\y)$ in either case.

First, consider the case in which $y_1 < y_2$.
Suppose $\x = (x_1, x_2) \in [y_1 + y_2]^2$ satisfies $\x\in \PAINV_2(y)$ and $\x \neq (1, 1)$. Let $\x' = (x_1', x_2')$ be the nondecreasing rearrangement of $\x$.
Note that $\x \in \PAINV_2(\y)$ implies $\x' \in \PAINV_2(\y)$ and recall that $\PAINV_2(\y) \subseteq \PA_2(\y)$.
Therefore, by Proposition~\ref{proposition: nondecreasing}, we have $x_1' = 1$ and $1 \leq x_2' \leq y_1 + 1$.
Since $x_1' = 1$ and $\x\neq(1, 1)$, we in fact have $1 < x_2' \leq y_1 + 1$.
Now, consider the rearrangement $\x'' = (x_2', x_1') = (x_2', 1)$.
Under $\x''$, car $1$ parks in spots $x_2', x_2' + 1, \ldots, x_2' + y_1 - 1$, leaving $x_2' - 1$ spots available to the left of spot $x_2'$ and $y_2 - x_2' + 1$ spots available to the right of spot $x_2' + y_1 - 1$.
Note that $x_2' - 1 \leq y_1 + 1 - 1 = y_1 < y_2$, and so car $2$ under $\x''$ is unable to park to the left of spot $x_2'$.
Similarly, $y_2 - x_2' + 1< y_2 - 1 + 1 = y_2$, and so car $2$ under $\x''$ is unable to park to the right of spot $x_2' + y_1 - 1$.
This shows that $\x'' \notin \PA_2(\y)$, which implies $\x \notin \PAINV_2(\y)$, a contradiction.

Next, consider the case in which $y_1 \geq y_2$. 
We first show that $\{(1, 1), (1, y_2 + 1), (y_2 + 1, 1)\} \subseteq \PAINV_2(\y)$.
To see that $(1, y_2 + 1) \in \PA_2(\y)$, note that upon the arrival of car $1$, it parks in spots $1, 2, \ldots, y_1$, leaving $y_2$ available spots to the right of spot $y_1$.
If $y_2 < y_1$, upon the arrival of car $2$, it finds spot $y_2 + 1$ occupied and continues moving forward until it finds and parks in the $y_2$ spots $y_1 + 1, y_1 + 2, \ldots, y_1 + y_2$ available to the right of spot $y_1$.
If $y_2 = y_1$, it similarly parks in spots $y_1 + 1, y_1 + 2, \ldots, y_1 + y_2$.
To see that $(y_2 + 1, 1) \in \PA_2(\y)$, note that upon the arrival of car $1$, it parks in spots $y_2 + 1, y_2 + 2, \ldots, y_2 + y_1$, leaving $y_2$ available spots to the left of spot $y_2 + 1$.
Upon the arrival of car $1$, it finds and parks in the $y_2$ spots $1, 2, \ldots, y_2$ available to the left of spot $y_2 + 1$.
This shows that $\{(1, 1), (1, y_2 + 1), (y_2 + 1, 1)\} \subseteq \PAINV_2(\y)$.
Conversely, suppose $\x = (x_1, x_2) \in [y_1 + y_2]^2$ with $\x \notin\{ (1, 1), (1, y_2 + 1), (y_2 + 1,1)\}$ satisfies $\x \in \PAINV_2(\y)$ and let $\x' = (x_1', x_2')$ be its nondecreasing rearrangement.
Note that $\x \in \PAINV_2(\y)$ implies $\x' \in \PAINV_2(\y)$ and recall that $\PAINV_2(\y) \subseteq \PA_2(\y)$.
Therefore, by Proposition~\ref{proposition: nondecreasing}, we have $x_1' = 1$ and $1 \leq x_2' \leq y_1 + 1$.
Since $x_1' = 1$ and $\x \notin\{ (1, 1), (1, y_2 + 1), (y_2 + 1,1)\}$, we in fact have $x_2' \neq 1, y_2 + 1$.
This means that either $1 < x_2' < y_2 + 1$ or $y_2 + 1 < x_2' \leq y_1 + 1$.
Consider the rearrangement $\x'' = (x_2', x_1') = (x_2', 1)$ and the case in which $1 < x_2' < y_2 + 1$.
Upon the arrival of car $1$ under $\x''$, it parks in spots $x_2', x_2' + 1, \ldots, x_2' + y_1 - 1$, leaving $x_2' - 1 < y_2 + 1 - 1 = y_2$ spots to the left of spot $x_2'$ and $y_2 - x_2' + 1 < y_2$ spots to the right of spot $x_2' + y_1 - 1$.
Therefore, car $2$ of length $y_2$ under $\x''$ is unable to park.
Similarly, consider the case in which $y_2 + 1 < x_2' \leq y_1 + 1$.
Car $1$ of length $y_1$ under $\x''$ finds that there are only $y_1 + y_2 - x_2' + 1 < y_1 + y_2 - y_2 - 1 = y_1$ spots available until the end of the road, and so it is unable to park.
This shows that $\x'' \notin \PA_2(\y)$ in either case, which implies $\x \notin \PAINV_2(\y)$, a contradiction.
\end{proof}

\begin{corollary}
\label{corollary: mi pair}
Let $\y=(y_1, y_2)\in\N^2$.
Then, $\y$ is minimally invariant if and only if $y_1 < y_2$.
\end{corollary}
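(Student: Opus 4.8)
The plan is to read this off directly from Theorem~\ref{theorem: invariant pair}. By definition $\y$ is minimally invariant exactly when $\PAINV_2(\y) = \{(1,1)\}$. Part (1) of Theorem~\ref{theorem: invariant pair} gives $\PAINV_2(\y) = \{(1,1)\}$ whenever $y_1 < y_2$, so such $\y$ is minimally invariant; part (2) gives that $(1,y_2+1)$ and $(y_2+1,1)$ both lie in $\PAINV_2(\y)$ whenever $y_1 \geq y_2$, and since these are distinct from $(1,1)$, such $\y$ is not minimally invariant. This proves the equivalence with essentially no work beyond quoting the theorem.

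I would then also record the promised alternate argument via Theorem~\ref{theorem: mi}, which for $n=2$ asserts that $\y$ is minimally invariant if and only if there is no $w \in \N_{>1}$ with $(1,w) \in \PAINV_2(\y)$. For this I run the two relevant parking experiments. Under the preference list $(1,w)$, car $1$ occupies spots $1, \dots, y_1$ and, by Proposition~\ref{proposition: nondecreasing} applied to the nondecreasing list $(1,w)$ (or by direct inspection), car $2$ parks if and only if $w \leq 1 + y_1$. Under $(w,1)$ with $w > 1$, car $1$ needs $w \leq y_2 + 1$ merely to fit before the end of the street, in which case it parks in spots $w, \dots, w+y_1-1$, leaving a left gap of $w-1$ unoccupied spots and a right gap of $y_2 - w + 1$ unoccupied spots; car $2$ has length $y_2$ and preference $1$, so it parks only if the left gap already accommodates it, i.e.\ $w - 1 \geq y_2$. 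Combined with $w \leq y_2 + 1$ this forces $w = y_2 + 1$. Hence some $w \in \N_{>1}$ satisfies $(1,w) \in \PAINV_2(\y)$ precisely when $y_2 + 1 \leq y_1 + 1$, that is, when $y_1 \geq y_2$; by Theorem~\ref{theorem: mi} this is exactly the situation in which $\y$ fails to be minimally invariant, so $\y$ is minimally invariant iff $y_1 < y_2$.

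There is no real obstacle here: the substance is entirely carried by Theorem~\ref{theorem: invariant pair}, and for the alternate route the only care needed is bookkeeping the left and right gaps in the $(w,1)$ experiment and observing that the case $w > y_2 + 1$ already fails at car $1$.
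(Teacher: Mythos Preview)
Your proposal is correct and mirrors the paper's approach exactly: first deducing the corollary immediately from Theorem~\ref{theorem: invariant pair}, and then giving the alternate proof via Theorem~\ref{theorem: mi} by analyzing the $(1,w)$ and $(w,1)$ parking experiments to conclude that a valid $w \in \N_{>1}$ exists precisely when $y_1 \geq y_2$. The only cosmetic point is that in the $(w,1)$ case you might explicitly note the right gap $y_2 - w + 1 < y_2$ is too small for car~$2$, but this is implicit in your bookkeeping.
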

\begin{proof}[Alternate proof of Corollary~\ref{corollary: mi pair}]
By Theorem \ref{theorem: mi}, $\y$ is minimally invariant if and only if there does not exist $w \in \N_{> 1}$ satisfying $(1,w), (w,1) \in \PA_2(\y)$.
Note that $(1,w) \in \PA_2(\y)$ if and only if $w \leq 1+y_1 $, whereas $(w, 1) \in \PA_2(\y)$ if and only if $w = 1 + y_2$. 
Therefore, $\y$ is minimally invariant if and only if there does not exist $w \in \N_{> 1}$ satisfying $w \leq 1 + y_1$ and $w = 1 + y_2$.

Suppose $y_1 < y_2$. 
Then, there does not exist $w \in \N_{> 1}$ satisfying $w \leq 1 + y_1$ and $w = 1 + y_2$, for otherwise $1 + y_2 = w \leq 1 + y_1$, implying $y_2 \leq y_1$ and contradicting $y_1 < y_2$.
This shows that if $y_1 < y_2$, then $\y$ is minimally invariant.
Conversely, suppose $y_1 \geq y_2$.
Then, letting $w = 1 + y_2$ satisfies $w > 1$, $w \leq 1 + y_1$, and $w = 1 + y_2$, meaning $\y$ is not minimally invariant.
Equivalently, this shows that if $\y$ is minimally invariant, then $y_1 < y_2$.
\end{proof}

\subsection{Three Cars}
We now characterize minimally invariant parking assortments with three cars (Theorem~\ref{theorem: invariant pair}).
The proof extends the style of the alternate proof of Corollary~\ref{corollary: mi pair} to the three-car setting.
\begin{theorem}
\label{theorem: mi triple}
Let $\y=(y_1,y_2,y_3)\in\N^3$.
Then, $\y$ is minimally invariant if and only if $y_1<y_2$, $y_1 < y_3$, and $y_1+y_3 \neq y_2$. 
\end{theorem}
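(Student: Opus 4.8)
The plan is to invoke Theorem~\ref{theorem: mi}, which says that $\y$ is minimally invariant precisely when no integer $w>1$ makes all three rearrangements of $(1,1,w)$---namely $(1,1,w)$, $(1,w,1)$, and $(w,1,1)$---parking assortments for $\y$. So I would first determine, by directly running the parking experiment, the three sets $A=\{w>1:(1,1,w)\in\PA_3(\y)\}$, $B=\{w>1:(1,w,1)\in\PA_3(\y)\}$, and $C=\{w>1:(w,1,1)\in\PA_3(\y)\}$, and then work out exactly when $A\cap B\cap C=\emptyset$.

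The set $A$ is immediate from Proposition~\ref{proposition: nondecreasing}: since $(1,1,w)$ is nondecreasing, it is a parking assortment if and only if $w\le 1+y_1+y_2$, so $A=\{2,3,\dots,1+y_1+y_2\}$. For $B$, car~$1$ fills spots $1,\dots,y_1$; if $w\le y_1$ then car~$2$ is pushed past its preference to spot $y_1+1$ and all cars park, while if $w\ge y_1+1$ a short case analysis on how car~$3$ (which has preference~$1$) interacts with the two gaps created by car~$2$ shows parking succeeds if and only if $w=y_1+1$ or $w=y_1+y_3+1$; hence $B=\{2,3,\dots,y_1+1\}\cup\{y_1+y_3+1\}$. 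For $C$, car~$1$ parks at spots $w,\dots,w+y_1-1$ (which requires $w\le y_2+y_3+1$, else car~$1$ already fails), leaving a left gap of size $w-1$ and a right gap of size $y_2+y_3-w+1$; cars~$2$ and~$3$, both of preference~$1$, must then tile these two gaps exactly, and tracking the order in which they do so yields $C=\{y_2+1,\ y_2+y_3+1\}$, together with $y_3+1$ when $y_2\ge y_3$.

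With these three sets in hand, the last step is purely combinatorial: since every element of $C$ lies in $\{y_2+1,\ y_3+1,\ y_2+y_3+1\}$, I would test membership of each of these three candidate values in $A\cap B$, remembering that $y_3+1\in C$ only when $y_2\ge y_3$. This gives: $y_2+1\in A\cap B\cap C$ iff $y_2\le y_1$ or $y_2=y_1+y_3$; $y_3+1\in A\cap B\cap C$ iff $y_2\ge y_3$ and $y_3\le y_1$; and $y_2+y_3+1\in A\cap B\cap C$ iff $y_3\le y_1$ and ($y_2+y_3\le y_1$ or $y_2=y_1$). The third condition forces $y_2\le y_1$, hence is subsumed by the first; and the disjunction of the first two simplifies to ``$y_1\ge y_2$ or $y_1\ge y_3$ or $y_1+y_3=y_2$'' (if $y_3\le y_1$ but $y_2>y_1$, then $y_2>y_1\ge y_3$ forces $y_2\ge y_3$, so the second disjunct fires). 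Therefore $\y$ is \emph{not} minimally invariant exactly when $y_1\ge y_2$, $y_1\ge y_3$, or $y_1+y_3=y_2$, and negating yields the claimed characterization $y_1<y_2$, $y_1<y_3$, and $y_1+y_3\ne y_2$.

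The main obstacle is the determination of the set $C$. Unlike the $(1,1,w)$ and $(1,w,1)$ experiments, here the first car is the one with the large preference, so it splits the street into two gaps that the two unit-preference cars must tile exactly and in a prescribed order; one must carefully enumerate the few length configurations under which this succeeds with no ``interference'' car---one whose length exceeds the remaining gap size at its time of arrival---appearing in between. This is essentially the three-car instance of the bookkeeping already formalized in Corollary~\ref{corollary: gap}, and once it is pinned down the remaining intersection analysis and Boolean simplification are routine.
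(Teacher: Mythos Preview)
Your proposal is correct and follows essentially the same approach as the paper: invoke Theorem~\ref{theorem: mi}, determine for each rearrangement of $(1,1,w)$ the exact set of $w>1$ that yield a parking assortment (your sets $A$, $B$, $C$ coincide with the paper's conditions (1), (2a)--(2b), (3a)--(3c)), and then analyze when the intersection is empty. The only cosmetic difference is that the paper tabulates all six combinations of subconditions, whereas you more efficiently test the three candidate values from $C$ against $A\cap B$; both routes lead to the same Boolean simplification.
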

\begin{proof}
By Theorem \ref{theorem: mi}, $\y$ is minimally invariant if and only if there does not exist $w \in \N_{> 1}$ satisfying $(1,1,w), (1,w,1), (w,1,1) \in \PA_{3}(\y)$. 
We claim that:
\begin{enumerate}[label=(\arabic*)]
    \item\label{it: mi triple (1)}
    $(1,1,w) \in \PA_{3}(\y)$ if and only if $w\leq 1+y_1+y_2$.
    \item\label{it: mi triple (2)} 
    $(1,w,1) \in \PA_{3}(\y)$ if and only if
    \begin{enumerate}[label=(2\alph*)]
        \item\label{it: mi triple (2a)} $w\leq 1+y_1$, or
        \item\label{it: mi triple (2b)} $w=1+y_1+y_3$.
    \end{enumerate}
    \item\label{it: mi triple (3)}
    $(w,1,1) \in \PA_{3}(\y)$ if and only if
    \begin{enumerate}[label=(3\alph*)]
        \item\label{it: mi triple (3a)} $w=1+y_2$, or
        \item\label{it: mi triple (3b)} $w=1+y_2+y_3$, or
        \item\label{it: mi triple (3c)} $y_2\geq y_3$ and $w=1+y_3$.
    \end{enumerate}
\end{enumerate}
To see \ref{it: mi triple (1)}, note that $(1,1,w)$ is nondecreasing and so the claim follows from Proposition~\ref{proposition: nondecreasing}.
To see \ref{it: mi triple (2)}, note that if $(1,w,1) \in \PA_3(\y)$, then car $2$ either parks immediately after car $1$, or parks after car $1$ while leaving a gap (of unoccupied spots) of length exactly the length of car $3$.
The former holds whenever $w \leq 1 + y_1$ (this is \ref{it: mi triple (2a)}), whereas the latter holds if $w = 1 + y_1 + y_3$ (this is \ref{it: mi triple (2b)}).
To see \ref{it: mi triple (3)}, note that if $(w,1,1) \in \PA_3(\y)$, then car $1$ either leaves a gap (of unoccupied spots) of length exactly the length of car $2$, leaves a gap of length exactly the sum of lengths of car $2$ and car $3$, or leaves a gap of length exactly the length of car $3$ which car $2$ is unable to occupy.
The first case holds if $w = 1 + y_2$ (this is \ref{it: mi triple (3a)}), the second case holds if $w = 1 + y_2 + y_3$ (this is \ref{it: mi triple (3b)}), and the third case holds if $y_2 \geq y_3$ and $w = 1 + y_3$ (this is \ref{it: mi triple (3c)}).

We remark that the conditions above generate a solution set for $(y_1,y_2,y_3)$, which is the complement of the conditions in the theorem statement.
Therefore, $\y$ is minimally invariant if and only if there does not exist $w \in \N_{> 1}$ satisfying the conditions in \ref{it: mi triple (1)}, \ref{it: mi triple (2)}, and \ref{it: mi triple (3)}.
There are six different ways in which the conditions can hold:
\begin{itemize}
    \item \ref{it: mi triple (1)}, \ref{it: mi triple (2a)}, and \ref{it: mi triple (3a)} hold, or
    \item \ref{it: mi triple (1)}, \ref{it: mi triple (2a)}, and \ref{it: mi triple (3b)} hold, or
    \item \ref{it: mi triple (1)}, \ref{it: mi triple (2a)}, and \ref{it: mi triple (3c)} hold, or
    \item \ref{it: mi triple (1)}, \ref{it: mi triple (2b)}, and \ref{it: mi triple (3a)} hold, or
    \item \ref{it: mi triple (1)}, \ref{it: mi triple (2b)}, and \ref{it: mi triple (3b)} hold, or
    \item \ref{it: mi triple (1)}, \ref{it: mi triple (2b)}, and \ref{it: mi triple (3c)} hold.
\end{itemize}

Suppose $y_1 < y_2$, $y_1 < y_3$, and $y_1 + y_3 \neq y_2$.
We claim that, in this case, there does not exist $w \in \N_{> 1}$ such that the conditions in \ref{it: mi triple (1)}, \ref{it: mi triple (2)}, and \ref{it: mi triple (3)} hold.
To prove this, we show that if \ref{it: mi triple (3)} holds, then it cannot be that both \ref{it: mi triple (1)} and \ref{it: mi triple (2)} hold.
If \ref{it: mi triple (3a)} holds, then \ref{it: mi triple (2a)} cannot hold for otherwise $1 + y_2 = w \leq 1 + y_1$, implying $y_2 \leq y_1$ and contradicting $y_1 < y_2$.
Similarly, if \ref{it: mi triple (3a)} holds, then \ref{it: mi triple (2b)} cannot hold for otherwise $1 + y_2 = w = 1 + y_1 + y_3$, implying $y_2 = y_1 + y_3$ and contradicting $y_1 + y_3 \neq y_2$.
This shows that if \ref{it: mi triple (3a)} holds, then \ref{it: mi triple (1)} and \ref{it: mi triple (2)} cannot both hold.
If \ref{it: mi triple (3b)} holds, then \ref{it: mi triple (1)} cannot hold for otherwise $1 + y_2 + y_3 = w \leq 1 + y_1 + y_2$, implying $y_3 \leq y_1$ and contradicting $y_1 < y_3$.
This shows that if \ref{it: mi triple (3b)} holds, then \ref{it: mi triple (1)} and \ref{it: mi triple (2)} cannot both hold.
If \ref{it: mi triple (3c)} holds, then \ref{it: mi triple (2a)} cannot hold for otherwise $1 + y_3 = w \leq 1 + y_1$, implying $y_3 \leq y_1$ and contradicting $y_1 < y_3$.
Similarly, if \ref{it: mi triple (3c)} holds, then \ref{it: mi triple (2b)} cannot hold for otherwise $1 + y_3 = w = 1 + y_1 + y_3$, implying $y_1 = 0$ and contradicting $(y_1, y_2, y_3) \in \N^3$.
This shows that if \ref{it: mi triple (3c)} holds, then \ref{it: mi triple (1)} and \ref{it: mi triple (2)} cannot both hold.
This shows that if \ref{it: mi triple (3)} holds, then \ref{it: mi triple (1)} and \ref{it: mi triple (2)} cannot both hold.

Conversely, suppose it is not the case that $y_1 < y_2$, $y_1 < y_3$, and $y_1 + y_3 \neq y_2$.
Then, at least one of $y_1 \geq y_2$, $y_1 \geq y_3$, or $y_1 + y_3 = y_2$ holds.
If $y_1 \geq y_2$, then letting $w = 1 + y_2$ satisfies $w > 1$, $w \leq 1 + y_1 + y_2$ and so \ref{it: mi triple (1)} holds, $w \leq 1 + y_1$ and so \ref{it: mi triple (2a)} holds, and $w = 1 + y_2$ and so \ref{it: mi triple (3a)} holds.
This shows that, in this case, there exists $w \in \N_{> 1}$ such that the inequality conditions in \ref{it: mi triple (1)}, \ref{it: mi triple (2)}, and \ref{it: mi triple (3)} hold.
If $y_1 < y_2$ and $y_1 \geq y_3$, then $y_2 \geq y_3$ and letting $w = 1 + y_3$ satisfies $w > 1$, $w \leq 1 + y_2 \leq 1 + y_1 + y_2$ and so \ref{it: mi triple (1)} holds, $w \leq 1 + y_1$ and so \ref{it: mi triple (2a)} holds, and $y_2 \geq y_3$ and $w = 1 + y_3$ and so \ref{it: mi triple (3c)} holds.
This shows that, in this case, there exists $w \in \N_{> 1}$ such that the inequality conditions in \ref{it: mi triple (1)}, \ref{it: mi triple (2)}, and \ref{it: mi triple (3)} hold.
Lastly, if $y_1 < y_2$, $y_1 < y_3$, and $y_2 = y_1 + y_3$ , then letting $w = 1 + y_2$ satisfies $w > 1$, $w \leq 1 + y_1 + y_2$ and so \ref{it: mi triple (1)} holds, $w = 1 + y_1 + y_3$ and so \ref{it: mi triple (2b)} holds, and $w = 1 + y_2$ and so \ref{it: mi triple (3a)} holds.
This shows that, in this case, there exists $w \in \N_{> 1}$ such that the inequality conditions in \ref{it: mi triple (1)}, \ref{it: mi triple (2)}, and \ref{it: mi triple (3)} hold.
This shows that if it is not the case that $y_1 < y_2$, $y_1 < y_3$, and $y_1 + y_3 \neq y_2$, then there exists $w \in \N_{> 1}$ such that the inequality conditions in \ref{it: mi triple (1)}, \ref{it: mi triple (2)}, and \ref{it: mi triple (3)} hold.
\end{proof}

Next, we provide a full characterization of the set of invariant parking assortments with three cars.
Given the technicality of these results, we provide the proofs in the listed appendix. 

\begin{theorem}
\label{theorem: invariant triple}
Let $a<b<c$ be in $\N$. Then Table \ref{thm:table} provides car lengths $\y \in \{a,b,c\}^3$ and the corresponding sets $\PAINVND_3(\y)$.
\begin{table}[ht]
\resizebox{\textwidth}{!}{
\begin{tabular}{|c|c|c|}\hline
$\y$      & $\PAINVND_3(\y)$             & Proof                 \\ \hline
        $(a,a,a)$ & $(1,1,1),(1,1,1+a),(1,1,1+2a),(1,1+a,1+a),(1,1+a,1+2a)$ & \ref{prop: y=(a,a,a)} \\
        \hline
        $(a,a,b)$ & $(1,1,1),(1,1,1+a)$ & \ref{prop: y=(a,a,b)} \\
        \hline
        $(a,b,a)$, $b=2a$ & $(1,1,1),(1,1,1+a),(1,1,1+2a)$ & \ref{prop: y=(a,b,a), b=2a} \\
        \hline
        $(a,b,a)$, $b\neq 2a$ & $(1,1,1),(1,1,1+a)$ & \ref{prop: y=(a,b,a), b!=2a} \\ 
        \hline 
        $(b,a,a)$, $2a\leq b$ & $(1,1,1),(1,1,1+a),(1,1,1+2a),(1,1+a,1+a),(1,1+a,1+2a)$ & \ref{prop: y=(b,a,a), 2a<=b} \\ 
        \hline 
        $(b,a,a)$, $2a>b$ & $(1,1,1),(1,1,1+a),(1,1+a,1+a)$ & \ref{prop: y=(b,a,a), 2a>b} \\
        \hline
        $(a,b,b)$ & $(1,1,1)$ & \ref{prop: y=(a,b,b)} \\
        \hline 
        $(b,a,b)$ & $(1,1,1),(1,1,1+a)$ & \ref{prop: y=(b,a,b)} \\ 
        \hline 
        $(b,b,a)$ & $(1,1,1),(1,1,1+a),(1,1,1+b),(1,1,1+a+b)$ & \ref{prop: y=(b,b,a)} \\ 
        \hline
        $(a,b,c)$ & $(1,1,1)$ & \ref{prop: y=(a,b,c)} \\
        \hline
        $(a,c,b)$, $a+b=c$ & $(1,1,1),(1,1,1+a+b)$ & \ref{prop: y=(a,c,b), a+b=c} \\ 
        \hline 
        $(a,c,b)$, $a+b\neq c$ & $(1,1,1)$ & \ref{prop: y=(a,c,b), a+b!=c} \\ 
        \hline 
        $(b,a,c)$ & $(1,1,1),(1,1,1+a)$ & \ref{prop: y=(b,a,c)} \\
        \hline 
        $(b,c,a)$, $a+b=c$ & $(1,1,1),(1,1,1+a),(1,1,1+a+b)$ & \ref{prop: y=(b,c,a), a+b=c} \\
        \hline 
        $(b,c,a)$, $a+b\neq c$ & $(1,1,1),(1,1,1+a)$ & \ref{prop: y=(b,c,a), a+b!=c} \\
        \hline 
        $(c,a,b)$, $a+b\leq c$ & $(1,1,1),(1,1,1+a),(1,1,1+a+b)$ & \ref{prop: y=(c,a,b), a+b<=c} \\
        \hline 
        $(c,a,b)$, $a+b>c$ & $(1,1,1),(1,1,1+a)$ & \ref{prop: y=(c,a,b), a+b>c} \\
        \hline 
        $(c,b,a)$, $a+b\leq c$ & $(1,1,1),(1,1,1+a),(1,1,1+b),(1,1,1+a+b)$ & \ref{prop: y=(c,b,a), a+b<=c} \\
        \hline 
        $(c,b,a)$, $a+b>c$ & $(1,1,1),(1,1,1+a),(1,1,1+b)$ & \ref{prop: y=(c,b,a), a+b>c} \\
        \hline 
\end{tabular}
}
\caption{Car lengths $\y \in \{a,b,c\}^3$ with $a<b<c$ and corresponding sets $\PAINVND_3(\y)$.
}\label{thm:table}
\end{table}
\end{theorem}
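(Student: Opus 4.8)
The plan is to verify Theorem~\ref{theorem: invariant triple} by working through each of the nineteen rows of Table~\ref{thm:table} individually, in each case computing $\PAINVND_3(\y)$ from first principles and then cross-checking against the structural constraints already available to us. The backbone of every case is Proposition~\ref{proposition: nondecreasing}: a nondecreasing $\x=(x_1,x_2,x_3)$ lies in $\PA_3(\y)$ iff $x_1=1$, $x_2\le 1+y_1$, and $x_3\le 1+y_1+y_2$. Since $\PAINVND_3(\y)\subseteq\PAND_3(\y)$, this immediately bounds the candidate list; the remaining work is to test, for each such candidate, whether \emph{all} $3!$ (or fewer, accounting for repeated entries) rearrangements park. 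Lemma~\ref{lemma: minentry} prunes aggressively: if $k=\min_i y_i$, then no entry strictly between $1$ and $k$ can appear, so in practice only entries of the form $1$, $1+$(partial sums of the $y_i$'s), and a few sporadic values survive. Lemma~\ref{lemma: removal} gives another consistency check: deleting the largest entry of an invariant assortment must yield an invariant assortment for $\y_{\vert_2}$, whose value is pinned down by Theorem~\ref{theorem: invariant pair}.

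Concretely, I would organize the nineteen cases by the multiset $\{a,b,c\}$-pattern and proceed as follows. First handle $\y=(a,a,a)$: this is the constant-length case $c=a$, so Theorem~\ref{theorem: constant} (equivalently Corollary~\ref{corollary: catalan count}, which says $|\PAINVND_3|=C_3=5$) gives the five listed vectors directly — no rearrangement checking needed. Next, the cases where $\y_{\vert_2}$ is minimally invariant (namely $y_1<y_2$: these are $(a,a,b)$, $(a,b,a)$, $(a,b,b)$, $(a,b,c)$, $(a,c,b)$, $(b,a,\cdot)$ patterns with $y_1<y_2$): here Corollary~\ref{corollary: append} forces every nondecreasing invariant assortment to be $(1,1,w)$, so the work reduces to determining which $w$ make $(1,1,w)$, $(1,w,1)$, $(w,1,1)$ all park — exactly the three-way analysis already carried out in the proof of Theorem~\ref{theorem: mi triple} for the $n=3$ minimally-invariant characterization, which I can reuse almost verbatim. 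For the remaining patterns where $y_1\ge y_2$ ($(b,a,a)$, $(b,a,b)$, $(b,b,a)$, $(b,c,a)$, $(c,a,b)$, $(c,b,a)$, and $(a,b,a)$ with $b\le 2a$... — I'd recheck the inequality direction case by case), $\y_{\vert_2}$ is \emph{not} minimally invariant, so genuinely two-parameter families $(1,1+y_2,w)$ etc.\ can appear, and I must enumerate candidates from Proposition~\ref{proposition: nondecreasing} and test each rearrangement by hand, tracking the gap left by the car with the largest preference and checking whether the remaining (length $a$, length $b$, or length $a+b$) cars can exactly fill it.

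For the gap-filling checks I would lean on the structural observation already articulated before Corollary~\ref{corollary: gap}: when the big-preference car parks leaving a gap, invariance demands that some subcollection of the not-yet-arrived cars has lengths summing exactly to the gap \emph{and} that no intervening too-large car blocks the fill. With only three cars the intervening-car subtlety is mild: the gap has size at most $y_2$ (forced by Proposition~\ref{proposition: nondecreasing}), so for the gap to be fillable by a single later car it must equal $y_3$ or, if both remaining cars go into it, $y_2+y_3$ — and whether the smaller of the two can slot in first depends on a clean inequality like $y_2\ge y_3$, which is precisely what distinguishes the $a+b\le c$ vs.\ $a+b>c$ and $b\le 2a$ vs.\ $b>2a$ subcases in the table. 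I would also double-check each computed $\PAINVND_3(\y)$ against Lemma~\ref{lemma: removal}: e.g.\ for $(b,b,a)$ the largest entry of $(1,1,1+a+b)$ is $1+a+b$, and deleting it leaves $(1,1)\in\PAINV_2((b,b))$, consistent since $y_1=b\ge b=y_2$; such checks catch arithmetic slips quickly.

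The main obstacle I anticipate is not conceptual but combinatorial bookkeeping: in the $y_1\ge y_2$ patterns there can be candidates of the form $(1,1+y_2,w)$ and $(1,w,w')$ beyond the $(1,1,w)$ family, and one must carefully enumerate all of them from the Proposition~\ref{proposition: nondecreasing} bounds and then check every distinct rearrangement — and it is easy to miss a candidate or mishandle the case of repeated entries (where the ``number of rearrangements'' drops). To keep this honest I would, for each of these harder rows, first list \emph{all} nondecreasing vectors satisfying $x_1=1$, $x_2\le 1+y_1$, $x_3\le 1+y_1+y_2$ together with the $1<x_i\le\min y_i$ prohibition from Lemma~\ref{lemma: minentry}, then dispose of each by exhibiting either a failing rearrangement or a complete parking argument for all rearrangements. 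The payoff of doing the minimally-invariant cases first is that Theorem~\ref{theorem: mi triple} already tells us exactly which $\y$ have $\PAINVND_3(\y)=\{(1,1,1)\}$, so those rows (e.g.\ $(a,b,b)$, $(a,b,c)$, $(a,c,b)$ with $a+b\ne c$) are essentially free, and the whole proof decomposes into the one constant case, a handful of free cases, and the roughly ten cases needing explicit but short rearrangement analysis.
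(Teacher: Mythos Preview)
Your overall plan---case-by-case over the nineteen rows, with Theorem~\ref{theorem: constant} handling $(a,a,a)$, Corollary~\ref{corollary: append} plus the three-way $(1,1,w)/(1,w,1)/(w,1,1)$ analysis from the proof of Theorem~\ref{theorem: mi triple} handling the rows with $y_1<y_2$, and direct work for the rest---is exactly the paper's strategy. The one substantive difference is the tool used when $y_1\ge y_2$ and one must rule out (or accommodate) $x_2>1$: the paper does not use Lemma~\ref{lemma: removal} or raw gap enumeration here but instead develops Theorem~\ref{theorem: unique 1} (the ``unique~$1$'' theorem), which says that if $\x\in\PAINV_n(\y)$ has exactly one entry equal to $1$ then $\beta_j(\x)\in\PAINV_{n-1}(\y_{\widehat{i}})$ for all $i,j$. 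Choosing $i$ so that $\y_{\widehat{i}}$ is a pair whose $\PAINV_2$ is known from Theorem~\ref{theorem: invariant pair} simultaneously constrains $x_2$ and $x_3$ to at most two or three candidates, after which a single explicit rearrangement disposes of each spurious one. Your alternative---Lemma~\ref{lemma: removal} pins $x_2\in\{1,1+y_2\}$, then gap analysis on the $(x_3,\cdot,\cdot)$ rearrangements pins $x_3$---also works, but is less uniform: Lemma~\ref{lemma: removal} only constrains the non-maximal entries, so you must argue separately that the gap $x_3-1$ lies in $\{0,y_2,y_3,y_2+y_3\}$, and your claim that ``the gap has size at most $y_2$'' is not right in general.

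One bookkeeping slip worth flagging: your sorting of patterns by whether $y_1<y_2$ is off in several places. For instance $(a,a,b)$ has $y_1=y_2$, every $(b,a,\cdot)$ has $y_1>y_2$, and $(b,c,a)$ has $y_1<y_2$; the correct list eligible for Corollary~\ref{corollary: append} is $(a,b,a)$, $(a,b,b)$, $(a,b,c)$, $(a,c,b)$, $(b,c,a)$. You already note you would recheck these, so this is cosmetic rather than structural.
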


\begin{remark}
    Note that for any arbitrary $\y\in\N^3$, there exists a choice of $a,b,c$ so that $\y$ is represented in one of the rows of Table \ref{thm:table}. Hence, the list of cases presented in Theorem \ref{theorem: invariant triple} is exhaustive.
\end{remark}

The proof of Theorem \ref{theorem: invariant triple}, presented in Appendix~\ref{sec: appendix proof of theorem invariant trible}, leverages the following general result, which we present here in order to highlight its use.
\begin{theorem}
\label{theorem: unique 1}
Let $\y=(y_1,y_2,\ldots,y_n) \in \mathbb{N}^n$ and $\x=(x_1,x_2,\ldots,x_n) \in \N^n$. 
If $\x\in\PAINV_n(\y)$ and $|\{j \in [n] : x_j = 1\}| = 1$, then $\beta_j(\x) \in \PAINV_{n-1}(\y_{\widehat{i}})$ for every $i,j\in[n]$, where $\beta_j: \N^n \rightarrow \N^{n-1}$ is given by $\beta_j(\x) = (b_1, \ldots, b_{j-1}, b_{j+1}, \ldots, b_n)$, where for $k \in [n] \setminus \{j\}$ we let $b_k = \max\{1,x_k - y_j\}$.
\end{theorem}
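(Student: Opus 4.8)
The plan is to argue that removing the unique car with preference $1$ (and shrinking all other preferences by its length $y_j$, clamped at $1$) commutes with the invariance property, and then observe that the length of the removed car and the position it occupies do not matter for whether the remaining cars park. Fix $\x = (x_1, \ldots, x_n) \in \PAINV_n(\y)$ with exactly one index $j$ such that $x_j = 1$. Under any rearrangement of $\x$, the car with preference $1$ parks in some block of $y_j$ contiguous spots, and since $\x$ is an invariant parking assortment, all cars park regardless of where in the queue this car appears. The key observation is that the behavior of the other $n-1$ cars, restricted to the street with the $y_j$ spots occupied by this car deleted, is exactly the parking process for the length list $\y_{\widehat{j}}$ with preference list obtained by decrementing each $x_k$ ($k \neq j$) by $y_j$ when $x_k$ lies to the right of the removed block, and leaving it at (at least) $1$ otherwise — which is precisely $\beta_j(\x)$ if the car with preference $1$ is first in the queue.

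First I would make precise the following claim: if the car of length $y_j$ with preference $1$ is the \emph{first} to arrive, it occupies spots $1, \ldots, y_j$, and the subsequent parking process of cars $k \in [n] \setminus \{j\}$ on spots $y_j + 1, \ldots, m$ is isomorphic (via the shift $t \mapsto t - y_j$) to the parking process of $\y_{\widehat{j}}$ under the preference list $\beta_j(\x)$ on a street of $m - y_j = \sum_{i \neq j} y_i$ spots. The clamping $b_k = \max\{1, x_k - y_j\}$ is correct because any car with original preference $x_k \leq y_j$ would, after the first car parks, begin searching at spot $1$ of the shifted street, i.e. have effective preference $1$. Since $\x \in \PAINV_n(\y)$, in particular the rearrangement placing car $j$ first is a parking assortment, so $\beta_j(\x) \in \PA_{n-1}(\y_{\widehat{j}})$; and since \emph{every} rearrangement of $\x$ that keeps car $j$ first is a parking assortment, and these correspond bijectively to rearrangements of $\beta_j(\x)$, we get $\beta_j(\x) \in \PAINV_{n-1}(\y_{\widehat{j}})$.

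Next I would show the index $i$ in $\y_{\widehat{i}}$ can be arbitrary, not just $i = j$. Here the point is that $\PAINV$-membership of $\beta_j(\x)$ depends on $\y_{\widehat{j}}$ only as a multiset together with... no — more carefully: I would argue that the \emph{length} of the removed car is irrelevant. Suppose we instead remove some other car $i \neq j$ from $\y$. I would show $\PAINV_{n-1}(\y_{\widehat{i}}) = \PAINV_{n-1}(\y_{\widehat{j}})$ is \emph{not} what is claimed — rather, one shows $\beta_j(\x) \in \PAINV_{n-1}(\y_{\widehat{i}})$ directly by re-running the argument: in any rearrangement of $\beta_j(\x)$, we can reinsert a car of length $y_i$ with preference $1$ at the front, obtaining a rearrangement of a preference list for $\y$ whose invariance (inherited from $\x$, since $\x$ has the all-but-one-ones structure only if... ) Actually the cleanest route: since $|\{j : x_j = 1\}| = 1$, the preference list $\x$ is \emph{not} forced to be $(1^{n-1}, w)$, so I cannot invoke Corollary~\ref{corollary: gap} directly; instead I would prove a small lemma that for \emph{any} $\y' \in \N^{n-1}$ and any $c \in \N$, $\mathbf{u} \in \PAINV_{n-1}(\y')$ if and only if $(1, \mathbf{u}')\in \PAINV_n((c, \y'))$ where $\mathbf{u}'$ is $\mathbf{u}$ with entries shifted up by $c$ — i.e. prepending a length-$c$ car with preference $1$ is "transparent." Combining this for the two lengths $y_i$ and $y_j$ gives $\beta_j(\x) \in \PAINV_{n-1}(\y_{\widehat{i}})$ for all $i$.

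I expect the main obstacle to be the bookkeeping in the shift/clamp correspondence — precisely verifying that the parking outcome of cars $[n]\setminus\{j\}$ on the truncated street matches that of $\beta_j(\x)$ on $\y_{\widehat{j}}$, including the edge cases where several original preferences collapse to $1$ under the clamp, and confirming that \emph{collisions and overflow} (a car running off the end of the street) transfer correctly under the shift. A secondary subtlety is the quantifier "for every $i, j \in [n]$": one must be careful that $j$ ranges only over the unique index with $x_j = 1$ for the statement to be meaningful as the proof uses it, or else interpret $\beta_j$ for general $j$ and check the claim still holds — I would flag this and, if the intended reading is the unique-$j$ one, state it explicitly at the start of the proof. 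Modulo these verifications, the structure is: (i) the shift lemma, (ii) transparency of prepending a preference-$1$ car of arbitrary length, (iii) assemble.
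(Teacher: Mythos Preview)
Your proposal rests on a conceptual confusion about what a rearrangement of $\x$ does in this model. The cars are fixed: car $k$ always has length $y_k$ and always arrives $k$th; a rearrangement permutes only the \emph{preferences}. So when the preference $1$ is placed at position $i$ in a rearrangement $\x'$, it is car $i$, of length $y_i$, that receives preference $1$---there is no operation that ``moves car $j$ first'' while keeping its length $y_j$. Your first step therefore does not yield $\beta_j(\x)\in\PAINV_{n-1}(\y_{\widehat{j}})$: putting the $1$ in the first slot gives car $1$ (length $y_1$) preference $1$, so after the shift you are in a problem for $\y_{\widehat{1}}$ with shift amount $y_1$, not $y_j$. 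The paper exploits precisely this: for each $i$ it places the $1$ at position $i$ and argues that, since $\x'\in\PA_n(\y)$ and car $i$ is the only car that can ever occupy spot $1$, car $i$ must park in spots $1,\ldots,y_i$; shifting the remaining cars by $y_i$ then reduces directly to the $(n-1)$-car problem for $\y_{\widehat{i}}$. No separate transparency device is needed to vary $i$.

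Your transparency lemma is also false as stated: take $\y'=(2)$, $\mathbf{u}=(1)$, $c=3$; then $\mathbf{u}\in\PAINV_1(\y')$, but $(1,4)\notin\PAINV_2((3,2))$ since under the rearrangement $(4,1)$ car $1$ of length $3$ cannot fit starting at spot $4$ on a street of length $5$. Even were the lemma true, prepending a length-$c$ car to $\y_{\widehat{j}}$ produces $(c,y_1,\ldots,\widehat{y_j},\ldots,y_n)$, which is not $\y$, so it would not let you pass between different $\y_{\widehat{i}}$. Finally, the theorem genuinely quantifies over all $j\in[n]$, not only the unique index with $x_j=1$; the paper handles this separately (its Claim~3) by applying the already-established reduction to the transposition $\pi(\x)$ that swaps entries $i$ and $j$, whereas you flag the issue but offer no argument.
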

\begin{proof}
Throughout this proof we assume $\x \in \PAINV_n(\y)$ and $|\{j \in [n] : x_j = 1\}| = 1$, as in the theorem statement.
For $i \in [n]$ define $\alpha_i: \N^n \rightarrow \N^n$ by $\alpha_i(\x)= (a_1,a_2, \ldots, a_n)$, where for car $k \in [n]$ we let
\begin{align*}
    a_k =
    \begin{cases}
    x_i, \mbox{ if $k=i$} \\
    \max\{1+y_i,x_k\}, \mbox{ otherwise}.
    \end{cases}
\end{align*}
We begin by proving the following claims.
\begin{claim}
\label{claim: unique 1 (1)}
For any $i \in [n]$, if $\x' = (x_1', x_2', \ldots, x_n')$ is a rearrangement of $\x$ with $x_i' = 1$, then $\alpha_i(\x')\in\PA_n(\y)$.
\end{claim}
\begin{cproof}
Let $\x'$ be as in the claim statement.
Note that $\x\in\PAINV_n(\y)$ implies $\x'\in\PA_n(\y)$.
Moreover, since $|\{j \in [n] : x_j = 1\}| = 1$, $\x'\in\PA_n(\y)$ implies car $i$ under $\x'$ parks in spots $1, 2, \ldots, y_i$.
Therefore, reassigning the preference of any car $k \in [n] \setminus \{i\}$ with $x_k' \in [y_i]$ into spot $1 + y_i$ does not affect the order in which the cars park.
That is, $\alpha_i(\x')\in\PA_n(\y)$.
\end{cproof}

\begin{claim}
\label{claim: unique 1 (2)}
For any $i \in [n]$ and any rearrangement $\x'$ of $\x$ with $x_i' = 1$, if $\alpha_i(\x')\in\PA_n(\y)$, then $\beta_i(\x')\in\PA_{n-1}(\y_{\widehat{i}})$.
\end{claim}
\begin{cproof}
Note that by construction of $\alpha_i$ there does not exist a car $k \in [n] \setminus \{i\}$ with preference ${\alpha_i(\x)}_k \in [y_i]$. 
Therefore, removing the $i$th entry from ${\alpha_i(\x)}$ and subtracting $y_i$ from each entry ${\alpha_i(\x')}_k$ with $k \in [n] \setminus \{i\}$ yields $\beta_i(\x')$.
Recall that if $\alpha_i(\x') \in \PA_n(\y)$, car $i$ under $\alpha_i(\x')$ parks in spots $1,2,\ldots,y_i$, and hence cars $k \in [n] \setminus \{i\}$ under $\alpha_i(\x')$ fully occupy spots $1 + y_i,2 + y_i,\ldots,\sum_{k=1}^n y_k$. 
If so, then under $\beta_i(\x')$, those same cars fully occupy spots $1, 2, \ldots, (\sum_{j=1}^n y_j) - y_i$.
Note that this is just a shift and reindexing of the parking spots and car preferences by the same value. 
This shows $\beta_i(\x') \in \PA_{n-1}(\y_{\widehat{i}})$.
\end{cproof}

Claim~\ref{claim: unique 1 (1)} and \ref{claim: unique 1 (2)} together imply that if $\x \in \PAINV_n(\y)$, then for any $i \in [n]$ and any rearrangement $\x'$ of $\x$ with $x_i' = 1$, $\beta_i(\x')\in\PA_{n-1}(\y_{\widehat{i}})$. 
Moreover, since this holds for any $i \in [n]$ and rearrangement $\x'$ of $\x$ with $x_i' = 1$, we have that if $\x\in\PAINV_n(\y)$, then $\beta_i(\x) \in \PAINV_{n-1}(\y_{\widehat{i}})$ for every $i \in [n]$. 
To complete the proof we now show the following.
\begin{claim}
\label{claim: unique 1 (3)}
For any $
i, j \in [n]$, if $\beta_i(\x) \in \PAINV_{n-1}(\y_{\widehat{i}})$, then $\beta_j(\x) \in \PAINV_{n-1}(\y_{\widehat{i}})$.
\end{claim}
\begin{cproof}
Note that if $i=j$, the statement holds. 
Fix any distinct $i, j \in [n]$ and let $\pi(\x)$ be the rearrangement of $\x$ that swaps its $i$th and $j$th entries and keeps the remaining entries fixed, i.e.,~$\pi$ is the transposition $(i,j)$ and $\pi(\x)=
(x_1, \ldots, x_{i-1}, x_j, x_{i+1}, \ldots, x_{j-1}, x_i, x_{j+1}, \ldots, x_n)$, where for notation we assume $i<j$ (note that this is immaterial to the argument).
Since $\x \in \PAINV_n(\y)$, we know $\pi(\x) \in \PAINV_n(\y)$, which implies $\beta_i(\pi(\x)) \in \PAINV_{n-1}(\y_{\widehat{i}})$ for all $i\in[n]$. 
Now note that by the definitions of $\pi(\x)$, $\beta_i$, and $\beta_j$ we have that $\beta_i(\pi(\x)) = \beta_j(\x)$ for any $i,j\in[n]$.
\end{cproof}

This completes the proof.
\end{proof}
To illustrate Theorem~\ref{theorem: unique 1} we present the following.
\begin{example}
Let $\y=(2,2,2)$ and $\x=(1,3,5).$ 
Note that $|\{j \in [n] : x_j = 1\}| = 1$, and $\x\in\PAINV_3(\y)$ by Theorem~\ref{theorem: constant}. 
So $\x$ satisfies the hypotheses of Theorem \ref{theorem: unique 1}. We have $\y_{\widehat{1}}=\y_{\widehat{2}}=\y_{\widehat{3}}=(2,2)$
    and
$\beta_1(\x)=\beta_2(\x)=(1,3)$, and $\beta_3(\x)=(1,1)$.
    Since $(1,3),(1,1)\in \PAINV_3((2,2))$ by Theorem~\ref{theorem: constant}, we have $\beta_j(\x)\in\PAINV_2(\y_{\widehat{i}})$ for all $i,j\in[3].$
\end{example}

\section{Open Problems}
\label{section: open problems}
In this section we provide some directions for future study.

\subsection{Connections Between Parking Assortments and Sequences}

Given that we have extended the definition of parking sequences to parking assortments, it would be interesting to further explore the connections between these sets of objects. 
To begin one could provide a characterization of non-trivial car lengths $\y\in\N^n$ such that $\PS_n(\y)=\PA_n(\y)$. 
Moreover, in terms of minimally invariant car lengths, we ask:
When does $\PSINV_n(\y)=\{(1^n)\}$ imply $\PAINV_n(\y)=\{(1^n)\}$?

\subsection{Parking Outcomes}
In~\cite{colmenarejo2021counting}, the authors provide product formulas for the number of $k$-Naples parking functions (when $k=0$, these are classical parking functions) by enumerating those which result in cars parked in a certain order. We then ask:

\begin{openproblem}\label{open:perms}
Fix $\y\in\N^n$ and let $\sigma=\sigma_1\sigma_2\cdots\sigma_n\in\mathfrak{S}_n$  denote the order in which the cars ultimately park. That is, $\sigma_i=j$ means that car $j$ was the $i$th car parked on the street. What is the number of parking assortments (or parking sequences) $\x$ which park the cars in order $\sigma$?
\end{openproblem}
Note that an answer to Open Problem \ref{open:perms} would yield a sum formula (over all permutations) giving a full count for the number of parking assortments (or parking sequences). 

Since the preliminary version of this work, Open Problem~\ref{open:perms} has been solved by Franks, Harris, Harry, Kretschmann, and Vance~\cite{franks2023counting}.

\subsection{Boolean Formula Characterizations}

While Theorem~\ref{theorem: mi} provides a pseudopolynomial-time characterization of minimally invariant car lengths, it relies on oracle calls to the ``parking experiment.''
An arguably more expressive characterization could be obtained through a Boolean formula; Corollary~\ref{corollary: mi pair} and Theorem~\ref{theorem: mi triple} establish these for minimally invariant car lengths with two and three cars, respectively.
A natural follow-up to these results would be the corresponding characterization with four cars.
Our computational experiments suggest the following.
\begin{conjecture}
\label{conjecture: mi quadruple}
Let $\y=(y_1,y_2,y_3,y_4)\in\N^4$.
Then, $\y$ is minimally invariant if and only if the following hold
\begin{align*}
    & (y_1 < y_2) 
    \land (y_1 < y_3)
    \land (y_1 < y_4) 
    \land (y_2 \neq y_1 + y_3)  
    \land (y_2 \neq y_1 + y_3 + y_4) \\
    & \quad \land ((y_2 < y_1 + y_3) \lor (y_3 \neq y_1 + y_4)) \\
    & \quad \land ((y_2 > y_1 + y_3) \lor ((y_2 \neq y_1 + y_4) \land ((y_2 < y_3) \lor (y_3 \neq y_1 + y_4)))).
\end{align*}
\end{conjecture}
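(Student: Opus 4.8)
The plan is to mirror the proof of Theorem~\ref{theorem: mi triple} in the four-car setting. By Theorem~\ref{theorem: mi}, $\y$ is minimally invariant if and only if there is no $w\in\N_{>1}$ with $(1,1,1,w),(1,1,w,1),(1,w,1,1),(w,1,1,1)\in\PA_4(\y)$, so the first step is to compute, for each of these four lists, the exact set of $w$ for which it lies in $\PA_4(\y)$. Writing $s_k=y_1+\cdots+y_k$, the nondecreasing list $(1,1,1,w)$ is handled by Proposition~\ref{proposition: nondecreasing} and contributes $w\le 1+s_3$, while the other three are handled by Corollary~\ref{corollary: gap} (equivalently the condition in Corollary~\ref{corollary: mi (alternate)}): once $w$ exceeds the relevant prefix sum, the car with preference $w$ opens a gap, and parking succeeds exactly when some subset of the later cars, taken in arrival order, fills that gap exactly with no outside car interfering. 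Carrying this out gives: for $(1,1,w,1)$, ``$w\le 1+s_2$ or $w=1+s_2+y_4$''; for $(1,w,1,1)$, ``$w\le 1+y_1$, or $w\in\{1+y_1+y_3,\,1+y_1+y_3+y_4\}$, or ($w=1+y_1+y_4$ and $y_3>y_4$)''; and for $(w,1,1,1)$, since the gap now lies to the left of car~$1$ and is offered to cars $2,3,4$ in order, ``$w\in\{1+y_2,\,1+y_2+y_3,\,1+y_2+y_3+y_4\}$, or $w=1+y_3$ with $y_2>y_3$, or $w=1+y_4$ with $y_2,y_3>y_4$, or $w=1+y_3+y_4$ with $y_2>y_3+y_4$, or $w=1+y_2+y_4$ with $y_3>y_4$''.

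With these four sets in hand, $\y$ fails to be minimally invariant precisely when they have a common element $w>1$. Since each membership condition pins $w$ either to a prefix-sum interval or to $1$ plus a subsum of $y_1,\dots,y_4$, there are only boundedly many candidate values; for each, one records the Boolean condition on $(y_1,y_2,y_3,y_4)$ under which it belongs to all four sets. It is efficient to first invoke Corollary~\ref{corollary: restriction} together with Theorem~\ref{theorem: mi triple}: minimal invariance of $\y$ forces minimal invariance of $\y_{\vert_{3}}=(y_1,y_2,y_3)$, hence $y_1<y_2$, $y_1<y_3$, and $y_1+y_3\neq y_2$, which one may assume from the outset and which already prunes most candidates. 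Under these assumptions the surviving bad candidates all involve $y_4$: for instance $w=1+y_4$ is bad exactly when $y_1\geq y_4$ (up to degenerate sub-cases where a smaller bad value appears), $w=1+y_1+y_3+y_4$ is bad when $y_2=y_1+y_3+y_4$, and $w=1+y_1+y_4$ is bad in the two ``mixed'' regimes recorded by the last two conjuncts of Conjecture~\ref{conjecture: mi quadruple} (there it coincides with $1+y_2$ or with $1+y_3$, according to the guards that hold). Negating the disjunction of all bad conditions and simplifying should reproduce the stated conjunction; one also checks that the candidate list is exhaustive, so that no bad $w$ is overlooked.

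The main obstacle is the analysis of $(w,1,1,1)$: because the gap opened by car~$1$ sits to its \emph{left} and is presented in turn to the unit-preference cars $2,3,4$, each of which either is captured by the current gap remainder or overshoots past car~$1$, the argument splits over the subsets of $\{2,3,4\}$ that fill the gap, and one must confirm that every overshooting car still fits on the right --- which it does, by a spot count using $m=\sum_i y_i$. The second delicate point is bookkeeping: assembling the numerous bad-$\y$ conditions and massaging their negation into precisely the Boolean formula of Conjecture~\ref{conjecture: mi quadruple} is easy to get wrong, so it is worth cross-checking the resulting formula by brute-force enumeration over $\y\in[N]^4$ for moderate $N$, and perhaps re-deriving it by intersecting the four sets symbolically as an independent check.
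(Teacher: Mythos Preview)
The statement you are attempting to prove is Conjecture~\ref{conjecture: mi quadruple}, which the paper explicitly labels as a conjecture in its open-problems section and does \emph{not} prove; the authors only say that their ``computational experiments suggest'' it. There is therefore no proof in the paper to compare your attempt against.

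That said, your plan is the natural extension of the proof of Theorem~\ref{theorem: mi triple} to four cars, and the building blocks are set up correctly. The four characterizations of the admissible $w$ for $(1,1,1,w)$, $(1,1,w,1)$, $(1,w,1,1)$, and $(w,1,1,1)$ are accurate; in particular your treatment of the left-gap case $(w,1,1,1)$, the enumeration of which subsets of $\{2,3,4\}$ can fill the gap, and the observation that overshooting cars always fit on the right by a spot count against $m=\sum_i y_i$ are all correct. Invoking Corollary~\ref{corollary: restriction} together with Theorem~\ref{theorem: mi triple} to assume $y_1<y_2$, $y_1<y_3$, and $y_1+y_3\neq y_2$ from the outset is a sensible pruning step.

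However, what you have written is a plan rather than a proof. The actual intersection of the four admissible-$w$ sets, and the simplification of its negation into the precise Boolean formula of the conjecture, are not carried out: you write that the simplification ``should reproduce the stated conjunction'' and propose a brute-force sanity check. Turning the plan into a proof requires exhaustively listing the finitely many candidate values of $w$ (each of the form $1$ plus a subsum of the $y_i$), writing down for each the exact condition on $\y$ under which it lies in all four sets, and then verifying that the disjunction of these ``bad'' conditions is logically equivalent to the negation of the displayed formula. That bookkeeping step, which you yourself flag as delicate, is where the content lies, and until it is done the argument is incomplete.
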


Note that the clauses in Corollary~\ref{corollary: mi pair} and Theorem~\ref{theorem: mi triple} form a subset of the clauses in Conjecture~\ref{conjecture: mi quadruple}.
Hence, more generally, we would like to understand the following.
\begin{openproblem}
Give a characterization of the    recursive nature of Boolean formulas for $\PAINV_n(\y)$ and the growth rate of their size for $n \in \N$. 
\end{openproblem}

\subsection{Computational Complexity}

The question we study in this work is: given $\y \in \N^n$ and $\x \in [\sum_{i = 1}^n y_i]$, is $\x \in \PAINV_n(\y)$?
Here we refer to this decision problem as \texttt{INV-PARKING-ASMT($\x,\y)$}.
Note that, from a computational point of view, $\x \in \PA_n(\y)$ can be easily decided by conducting the corresponding ``parking experiment.''
Therefore, \texttt{INV-PARKING-ASMT($\x,\y)$} can be decided in $O(n!)$ time by conducting the parking experiment for every rearrangement $\x'$ of $\x$.
Can this worst-case bound be improved to time polynomial in $n$?

For the case in which $\y = (c^n) \in \N^n$, Theorem~\ref{theorem: constant} answers this question positively.
Similarly, when $\y$ satisfies the conditions for minimal invariance in Theorem~\ref{theorem: mi}, we can conclude $\x \in \PAINV_n(\y)$  if and only if $\x=(1^n).$
In addition, Theorem~\ref{theorem: invariant pair} and Theorem~\ref{theorem: invariant triple} provide a full characterization for the settings in which there are two or three cars, respectively.
Beyond these special cases, however, a concise characterization deciding $\x \in \PAINV_n(\y)$ in its full generality remains elusive.
Is it possible that there is no concise characterization?
We formalize this possibility as follows.

A decision problem $\Pi$ is in co-NP if ``\texttt{No}'' answers have a deterministic polynomial-time verifier.
A decision problem $\Pi$ is co-NP-complete if all co-NP problems can be reduced to it in polynomial time.
Note that \texttt{INV-PARKING-ASMT($\x,\y)$} is in co-NP.
To see this, note that any rearrangement $\x'$ of $\x$ for which $\x' \notin \PA_n(\y)$, as determined by the ``parking experiment,'' certifies $\x \notin \PAINV_n(\y)$.
We now ask the following.
\begin{openproblem}
Is \texttt{INV-PARKING-ASMT($\x,\y)$} co-NP-complete?
\end{openproblem}
No polynomial-time algorithm is known for a co-NP-complete problem.
In particular, $\text{P} \stackrel{?}{=} \text{co-NP}$ is a well-known open problem.
Therefore, if \texttt{INV-PARKING-ASMT($\x,\y)$} is co-NP-complete, there is no concise characterization for \texttt{INV-PARKING-ASMT($\x,\y)$} unless $\text{P} = \text{co-NP}$.
A potential starting point in showing that \texttt{INV-PARKING-ASMT($\x,\y)$} is co-NP-complete might be the reduction in~\cite{uznanski2015all} for the co-NP-completeness of the all-permutations supersequence problem.

\subsection{Permutation Subsets}
Another problem to consider is whether preference sequences are invariant under subsets of the symmetric group. 
To formalize this, let $\y=(y_{1},y_2,\ldots,y_{n})\in\N^n$ and $\x = (x_{1},x_2,\ldots,x_{n})\in\PA_n(\y)$.
Let $\pi\in\mathfrak{S}_n$ be a permutation on the indices of $\x$. 
That is, define $\pi(\x)=(x_{\pi(1)},x_{\pi(2)},\ldots,x_{\pi(n)})$.
We say that $\x$ is \emph{$\pi$-invariant} if $\pi(\x)\in \PA_n(\y)$. More generally, for any subset (or subgroup) $T$ of the symmetric groups $\mathfrak{S_n}$, we say that $\x$ is $T$-invariant if $\pi(\x)\in\PA_n(\y)$ for all $\pi\in T$.
Of course, when $T=\mathfrak{S}_n$, $T$-invariant is precisely what we have studied. Moreover, if $\y\in\N^n$ and $\x\in\PAINV_n(\y)$, then $\x$ is $T$-invariant for any $T\subseteq\mathfrak{S}_n$. In what follows, for every $i\in[n-1]$, we let $s_{i}$ denote the neighboring transposition swapping indices $i$ and $i+1$. Hence, if $\x=(x_1,x_2,\ldots,x_n)$, then $s_i(\x)=(x_1,x_2,\ldots,x_{i-1},x_{i+1},x_{i},x_{i+2},\ldots,x_n)$. 
\begin{proposition}\label{prop:converse fails}
    Let $\y=(y_1,y_2,\ldots,y_n)\in\N^n$, $\x=(x_1,x_2,\ldots,x_n)\in\PA_n(\y)$, and $\mathcal{I}=\{i_1,i_2,\ldots,i_k\} \subseteq [n-1]$. 
    If $\x$ is $T$-invariant where $T=\{\pi\in \mathfrak{S}_n: \pi=\prod_{j\in\mathcal{J}} s_{j}\mbox{ and $\mathcal{J}\subseteq\mathcal{I}$}\}$, then $\x$ is $s_i$-invariant for all $i\in\mathcal{I}$.
\end{proposition}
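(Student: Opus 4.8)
The plan is to reduce the statement to an unwinding of the definitions: the only real content is the observation that every neighboring transposition $s_i$ with $i \in \mathcal{I}$ already lies in $T$. First I would fix an arbitrary $i \in \mathcal{I}$ and take the one-element index set $\mathcal{J} = \{i\}$, which is a subset of $\mathcal{I}$. The product $\prod_{j \in \mathcal{J}} s_j$ over this singleton is simply $s_i$ itself, so $s_i \in T$.

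Next I would invoke the hypothesis. Since $\x$ is $T$-invariant, by definition $\pi(\x) \in \PA_n(\y)$ for every $\pi \in T$; specializing to $\pi = s_i$ gives $s_i(\x) \in \PA_n(\y)$, which is exactly the statement that $\x$ is $s_i$-invariant. As $i \in \mathcal{I}$ was arbitrary, this yields $s_i$-invariance for all $i \in \mathcal{I}$ and completes the proof.

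The main thing to be careful about is bookkeeping rather than mathematics: the expression $\prod_{j \in \mathcal{J}} s_j$ is order-sensitive whenever $\mathcal{J}$ contains two consecutive integers, since $s_j s_{j+1} \neq s_{j+1} s_j$, so $T$ should be read as the set of all permutations realizable as some ordered product of a subset of $\{s_j : j \in \mathcal{I}\}$; this ambiguity does not affect the argument above because for a singleton $\mathcal{J}$ the product is unambiguous. Finally, it is worth recording --- and presumably the reason for the proposition's label --- that the converse fails: $\x$ being $s_i$-invariant for every $i \in \mathcal{I}$ does not force $\x$ to be $T$-invariant, because membership in $\PA_n(\y)$ need not be preserved when the individual swaps are composed; substantiating that claim would require a separate small counterexample and is not part of the proposition.
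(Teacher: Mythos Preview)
Your proof is correct and follows essentially the same approach as the paper: both observe that for each $i\in\mathcal{I}$ the singleton choice $\mathcal{J}=\{i\}$ places $s_i$ in $T$, so $T$-invariance immediately yields $s_i$-invariance. Your additional remarks on the order ambiguity of the product and on the failure of the converse are accurate and match the discussion surrounding the proposition in the paper.
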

\begin{proof}
As noted before, this result follows directly from the definition of $\x$ being $T$-invariant and taking $\mathcal{J}$ as the singleton sets $\{s_i\}$ for each $i\in\mathcal{I}$.
\end{proof}

We note that the converse of Proposition~\ref{prop:converse fails} is not generally true. 
For example, consider $\y=(1,2,2)$ and $\x=(1,1,2)\in\PA_3(\y)$. Then 
 $\x = s_1(\x)=(1,1,2)\in\PA_3(\y)$, and 
 $s_2(\x)=s_2s_1(\x)=(1,2,1)\in\PA_3(\y)$.
However, $s_1s_2(\x)=s_1s_2s_1(\x)=(2,1,1)\not\in\PA_3(\y)$.

In the special case where $\pi\in\mathfrak{S}_n$ is a product of disjoint transpositions, namely $\pi=s_{i_1}s_{i_2}\cdots s_{i_k}$ with $i_1,i_2,\ldots,i_k$ being nonconsecutive integers in the set $[n-1]$, one might believe that if $\x\in\PA_n(\y)$ is $s_{i_j}$-invariant for all $1\leq j\leq k$, then $\x$ is $\pi$-invariant. 
This is also false.
As an example, consider $\y=(1,2,1,2)$ and  $\x=(1,2,1,2)\in\PA_4(\y)$. 
Then $s_1(\x)=(2,1,1,2)\in\PA_4(\y)$ and $s_3(\x)=(1,2,2,1)\in\PA_4(\y)$.
However, $s_1s_3(\x)=(2,1,2,1)\not\in\PA_4(\y)$. 
In light of this, we ask the following.

\begin{openproblem}\label{last problem}
Suppose that $\x\in\PA_n(\y)$ is both $s_{i}$-invariant and $s_{j}$-invariant for some distinct $i,j\in[n-1]$. Then what must be true of $\y$ and $\x$ so that $\x$ is $s_{i} s_{j}$-invariant,  or
    $\x$ is $s_{j} s_{i}$-invariant?
Moreover, under what conditions on $\y$, $\x$, and $\mathcal{I}\subset[n-1]$ does $\x$ being $s_i$-invariant for all $i\in\mathcal{I}$ guarantee that $\x$ is $\pi$-invariant, where $\pi=\prod_{i\in \mathcal{I}}s_i$?
Lastly, what must be true about $\y$ and $\x$ so that if $\x$ is $s_i$-invariant for all $i\in[n-1]$, then $\x\in\PAINV_n(\y)$?
\end{openproblem}

We suspect that a good entryway into Open Problem~\ref{last problem} is to consider small values of $n$ and also the case where $\y=(c^n)$.

\section*{Acknowledgements}
This material is based upon work supported by the National Science Foundation under Grant No.~DMS-1929284 while the authors were in residence at the Institute for Computational and Experimental Research in Mathematics in Providence, RI.

\printbibliography

\addresseshere

\newpage
\appendix

\section{Proof of Theorem~\ref{theorem: invariant triple}}
\label{sec: appendix proof of theorem invariant trible}

In what follows, let $a,b,c \in \N$ with $a<b<c$ and recall \[\PAINVND_n(\y) := \{\x = (x_1, x_2, \ldots, x_n) \in \PAINV_n(\y) : x_1 \leq x_2 \leq \cdots \leq x_n \}\] denotes the set of nondecreasing invariant parking assortments given $\y \in \N^n$.
Moreover, since $(1^n) \in \PAINV_n(\y)$ for any $\y \in \N^n$, in our proofs of the following results we only argue about $\x \in \PAINV_n(\y)$ with $\x \neq (1^n)$.
\begin{proposition}
\label{prop: y=(a,a,a)}
Let $\y=(a,a,a) \in \N^3$. 
Then, $\PAINVND_3(\y)= \{(1,1,1), (1,1,1+a), (1,1,1+2a), (1,1+a,1+a), (1,1+a,1+2a)\}$.
\end{proposition}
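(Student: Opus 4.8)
The plan is to specialize Theorem~\ref{theorem: constant} to the case $n=3$, $c=a$, and then enumerate. Recall that Theorem~\ref{theorem: constant} characterizes $\PAINV_3((a,a,a))$ as the set of $\x=(x_1,x_2,x_3)$ satisfying \ref{it: constant (1)} $x_i\equiv 1\pmod{a}$ for all $i\in[3]$, and \ref{it: constant (2)} $|\{i\in[3]:x_i\leq aj\}|\geq j$ for all $j\in[3]$. Since the claimed description concerns only nondecreasing preference lists, I would first restrict attention to nondecreasing $\x$. As observed in the proof of Corollary~\ref{corollary: catalan count}, when $\x$ is nondecreasing, conditions \ref{it: constant (1)} and \ref{it: constant (2)} together are equivalent to requiring $x_i\in\{1,a+1,\ldots,(i-1)a+1\}$ for each $i\in[3]$; that is, $x_1=1$, $x_2\in\{1,a+1\}$, and $x_3\in\{1,a+1,2a+1\}$.

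Next I would simply enumerate the nondecreasing triples meeting these constraints. If $x_2=1$, then $x_3$ may be any of $1$, $a+1$, or $2a+1$, yielding $(1,1,1)$, $(1,1,1+a)$, and $(1,1,1+2a)$. If $x_2=a+1$, then the nondecreasing requirement $x_3\geq a+1$ forces $x_3\in\{a+1,2a+1\}$, yielding $(1,1+a,1+a)$ and $(1,1+a,1+2a)$. Since $x_1=1$ is forced, these five cases are exhaustive, and together they give exactly the claimed set. As a consistency check, this matches Corollary~\ref{corollary: catalan count}, which gives $\left|\PAINVND_3(\y)\right|=C_3=5$.

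I do not expect a genuine obstacle here: the statement is a direct consequence of the already-established characterization in Theorem~\ref{theorem: constant} followed by a short finite enumeration. The only point meriting a line of care is the reduction of conditions \ref{it: constant (1)} and \ref{it: constant (2)} to the per-coordinate form $x_i\in\{1,a+1,\ldots,(i-1)a+1\}$ for nondecreasing lists; but this is precisely the observation already used in the proof of Corollary~\ref{corollary: catalan count}, so it can be invoked rather than re-derived.
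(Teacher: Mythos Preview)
Your proposal is correct and follows essentially the same approach as the paper: both apply Theorem~\ref{theorem: constant} with $n=3$, $c=a$, reduce the conditions to $x_1=1$, $x_2\in\{1,1+a\}$, $x_3\in\{1,1+a,1+2a\}$ for nondecreasing $\x$, and enumerate. Your explicit enumeration and the consistency check against $C_3=5$ are minor elaborations on the paper's version, not a different route.
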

\begin{proof}
Let $\x=(x_1,x_2,x_3)\in[3a]^3$ be nondecreasing. 
By Theorem~\ref{theorem: constant}, $\x=(x_1,x_2,x_3)\in\PAINV_3(\y)$ if and only if $x_1,x_2,x_3\equiv 1 \mod{a}$, there is at least one index $j\in[3]$ such that $x_j\leq a$, there are at least two indices $j\in[3]$ such that $x_j\leq 2a$, and $x_j\leq 3a$ for all $j\in[3]$. 
These statements imply that $x_{1} = 1$, $x_{2} \in \{1, 1+a\}$, and $x_{3} \in \{1, 1+a, 1+2a\}$, as claimed.
\end{proof}

\begin{proposition}
\label{prop: y=(a,a,b)}
Let $\y = (a, a, b) \in \N^3$.
Then, $\PAINVND_3(\y) = \{(1,1,1), (1,1,1+a)\}$.
\end{proposition}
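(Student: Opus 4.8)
## Proof Proposal for Proposition~\ref{prop: y=(a,a,b)}

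The plan is to combine the nondecreasing membership criterion of Proposition~\ref{proposition: nondecreasing} with a direct analysis of which nondecreasing preference lists survive all rearrangements, exploiting that the distinct preference values of an invariant assortment are heavily constrained. Since $(1^3)\in\PAINVND_3(\y)$ always, I only need to handle $\x=(x_1,x_2,x_3)\neq(1,1,1)$ nondecreasing with $\x\in\PAINV_3(\y)$, and show the only such list is $(1,1,1+a)$, while also verifying $(1,1,1+a)$ genuinely is invariant.

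First I would extract the easy necessary conditions. Because $\x\in\PAINV_3(\y)\subseteq\PA_3(\y)$ and $\x$ is nondecreasing, Proposition~\ref{proposition: nondecreasing} forces $x_1=1$, $x_2\le 1+a$, and $x_3\le 1+2a$. Next, by Lemma~\ref{lemma: minentry}, since $\min_i y_i = a$, no entry $x_i$ can satisfy $1<x_i\le a$; hence each $x_i$ is either $1$ or at least $a+1$. Combined with the bounds above, this yields $x_2\in\{1,1+a\}$ and $x_3\in\{1,1+a\}\cup\{v : a+1\le v\le 2a+1\}$; I will pin down $x_3$ further below. Also, since $\x\neq(1,1,1)$ and $\x$ is nondecreasing, at least one of $x_2,x_3$ exceeds $1$.

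The core step is to rule out everything except $x_2=1$, $x_3=1+a$. I would split on $x_2$. If $x_2=1+a$, consider the rearrangement $\x'=(1+a,1,1)$: car~1 (length $a$) parks in spots $a+1,\dots,2a$, leaving a gap of exactly $a$ spots to its left and $b$ spots to its right; car~2 (length $a$) fills the left gap, and then car~3 (length $b$) must fill the remaining $b$ spots — so $\x'$ parks. But now consider instead $\x''=(1+a,1,1)$ is not the problem; rather I test the rearrangement that places $x_3$ first. More carefully: if $x_2 = 1+a$ then $x_3\ge 1+a$ as well, and I examine $\x'' = (x_3, 1, 1+a)$ or $\x''=(1+a, x_3, 1)$ to force a collision. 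The cleanest obstruction: with two entries equal to $1+a$, the rearrangement $(1+a,1+a,1)$ sends car~1 to spots $a+1,\dots,2a$, then car~2 (length $a$, preference $a+1$, occupied) drives forward past spot $2a$ and parks in $2a+1,\dots,3a$ only if $b\ge a$... since $b>a$ this needs $3a\le 2a+b$, i.e. $a\le b$, true, but car~2 has length $a$ not $b$, so it parks in $2a+1,\dots,3a$, leaving a gap of $a$ spots at $1,\dots,a$ and $b-a$ spots at $3a+1,\dots,2a+b$; car~3 has length $b$ and fits in neither ($a<b$ and $b-a<b$), so parking fails — hence $x_2=1+a$ is impossible, forcing $x_2=1$. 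Then $x_3>1$, and I show $x_3\notin\{v: a+2\le v\le 2a+1\}$ and $x_3\ne 1+2a$ and $x_3\ne 1+b$-type values by testing the rearrangement $(x_3,1,1)$: car~1 of length $a$ at spot $x_3$ leaves $x_3-1$ spots on the left; for parking to succeed under all rearrangements one needs $x_3-1\in\{0,a,b,a+b,2a\}$ intersected with feasibility, and checking each surviving candidate against the bound $x_3\le 1+2a$ and against the rearrangement $(1,x_3,1)$ (car~2 length $a$) eliminates all but $x_3=1+a$. Finally I verify $(1,1,1+a)$ is invariant by checking its three rearrangements directly: $(1,1,1+a)$, $(1,1+a,1)$, $(1+a,1,1)$ each park all three cars (in the last two, the car with preference $1+a$ has length $a$ and tucks exactly into the length-$a$ gap or finds spots $a+1,\dots,2a$, leaving the cars of lengths $a$ and $b$ to fill the rest consecutively).

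The main obstacle is the case analysis for $x_3$: there are several arithmetically plausible values of $x_3$ (roughly $1+a$, $1+b$, $1+2a$, and the ``non-congruent'' band $a+2,\dots,2a+1$) and for each I must exhibit a specific bad rearrangement or invoke Proposition~\ref{proposition: nondecreasing}/Lemma~\ref{lemma: minentry} to exclude it; the subtlety is that $b$ could be close to $2a$ or far from it, so I should keep the argument uniform by always reducing to: ``in some rearrangement, car~1 creates a gap whose size lies strictly between $0$ and $\min(a,b)=a$, or a final gap of size $<b$ that only a length-$b$ car could fill,'' which by Lemma~\ref{lemma: minentry}-type reasoning kills the list. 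Once the gap-size bookkeeping is organized this way, each elimination is a one-line check.
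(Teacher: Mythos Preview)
Your approach is essentially correct and reaches the right conclusion, but it is organized quite differently from the paper's proof. The paper first shows $x_2=1$ by invoking Theorem~\ref{theorem: unique 1}: if $x_2>a$ then $(x_2-a,x_3-a)\in\PAINV_2((a,b))$, and since $(a,b)$ is minimally invariant by Corollary~\ref{corollary: mi pair} this forces $x_2=x_3=1+a$, which is then killed by the single rearrangement $(1+a,1+a,1)$. For $x_2=1$ the paper plugs $\y=(a,a,b)$ into the three conditions extracted in the proof of Theorem~\ref{theorem: mi triple} and reads off $w=1+a$ from a short table. Your route instead avoids both Theorem~\ref{theorem: unique 1} and the Theorem~\ref{theorem: mi triple} machinery, relying only on Proposition~\ref{proposition: nondecreasing}, Lemma~\ref{lemma: minentry}, and explicit rearrangement checks. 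This is more elementary and self-contained, at the cost of a longer case analysis; the paper's version is shorter precisely because the heavy lifting has been packaged into those two earlier results.

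There is one genuine expository gap you should close. When you rule out $x_2=1+a$, your detailed analysis of $(1+a,1+a,1)$ only treats the case $x_3=1+a$, yet you conclude ``hence $x_2=1+a$ is impossible'' without addressing $x_3\in(1+a,2a+1]$. The fix is easy and you already named the right rearrangement: for \emph{any} $x_3\in[a+1,2a+1]$, under $(1+a,x_3,1)$ car~1 occupies $a+1,\dots,2a$, car~2 (length $a$, preference $x_3\ge a+1$) is pushed to $2a+1,\dots,3a$, and car~3 of length $b$ faces gaps of sizes $a$ and $b-a$, neither large enough. State this once for general $x_3$ rather than only for $x_3=1+a$. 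Separately, your claimed necessary set ``$x_3-1\in\{0,a,b,a+b,2a\}$'' from analyzing $(x_3,1,1)$ is not accurate: a careful run of that rearrangement gives exactly $x_3\in\{1,\,1+a,\,1+a+b\}$, and the bound $x_3\le 1+2a$ from Proposition~\ref{proposition: nondecreasing} then eliminates $1+a+b$ immediately (since $b>a$). Tightening this set up front removes the need for the vaguer ``check each surviving candidate'' step and makes the $x_2=1$ case a two-line argument.
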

\begin{proof}
Let $\x = (x_1, x_2, x_3) \in \PAINVND_3(\y)$.
Clearly $x_1 = 1$.
We now show that $x_2=1$ by contradiction assuming that $x_2>1$ and considering the following two mutually exclusive possibilities:
\begin{itemize}
    \item[] 
    Case 1: Suppose $x_2 > a$.
    Then, Theorem~\ref{theorem: unique 1} implies $(x_2 - a, x_3 - a) \in \PAINV_2((a,b))$.
    Since $a < b$, Corollary~\ref{corollary: mi pair} implies $(a,b)$ is minimally invariant, and so $x_2 = x_3 = 1 + a$.
    However, $\x = (1, 1 + a, 1 + a) \notin \PAINV_3(\y)$ since its rearrangement $\x' = (1 + a, 1 + a, 1) \notin \PA_3(\y)$.
    \item[] 
    Case 2: Suppose $x_2 \leq a$.
    If $a = 1$ we have $x_2 = 1$, contradicting the assumption that $x_2 > 1$.
    If $a > 1$, Lemma~\ref{lemma: minentry} implies $\x \notin \PAINV_3(\y)$.
\end{itemize}
Therefore, if $\x \in \PAINV_3(\y)$ is nondecreasing, then $x_1 = x_2 = 1$.
By the proof of Theorem~\ref{theorem: mi triple}, upon substituting $\x = (1, 1, w)$ and $\y = (a, a, b)$, we have that $\x \in \PAINV_3(\y)$ for $w \in \N_{> 1}$ if and only if
\begin{enumerate}[label=(\arabic*)]
    \item $w\leq 1+2a$ holds, and
    \item at least one of
    \begin{enumerate}[label=(2\alph*)]
        \item $w\leq 1+a$, or
        \item $w=1+a+b$ holds, and
    \end{enumerate}
    \item at least one of
    \begin{enumerate}[label=(3\alph*)]
        \item $w=1+a$, or
        \item $w=1+a+b$ holds.
    \end{enumerate}
\end{enumerate}
(We omit \ref{it: mi triple (3c)} since it requires $a \geq b$, a contradiction.)
To find the valid solutions for $w$, consider the following table, where ``None'' indicates that selecting the conditions marked ``Yes'' leads to no solutions for $w$. 
Namely, in the table, each row corresponds to a combination of ways in which conditions (1), (2), and (3) may hold, and the solutions for $w$ that arise (if any).
Note that empty cells represent that the listed condition is immaterial to that row.
\begin{align*}
    \begin{array}{|c|c|c|c|c|c|}
        \hline
        \text{Cond. (1)} & \text{Cond. (2a)} & \text{Cond. (2b)} & \text{Cond. (3a)} & \text{Cond. (3b)} & w \\
        \hline
        \text{Yes} & \text{Yes} & & \text{Yes} & & 1+a \\
        \hline
        \text{Yes} & \text{Yes} & & & \text{Yes} & \text{None} \\
        \hline
        \text{Yes} & & \text{Yes} & \text{Yes} & & \text{None} \\
        \hline
        \text{Yes} & & \text{Yes} & & \text{Yes} & \text{None} \\
        \hline 
    \end{array}
\end{align*}
Therefore, $w = 1 + a$ is the only valid solution satisfying $w > 1$.
This shows that $\PAINVND_3(\y) = \{(1,1,1), (1,1,1+a)\}$.
\end{proof}
    
\begin{proposition}
\label{prop: y=(a,b,a), b=2a}
Let $\y=(a,b,a) \in \N^3$ with $b=2a$. 
Then, $\PAINVND_3(\y)= \{ (1,1,1), (1,1,1+a), (1,1,1+2a)\}$.
\end{proposition}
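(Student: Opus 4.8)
The plan is to follow the template of the proof of Proposition~\ref{prop: y=(a,a,b)}: first show that every nondecreasing $\x = (x_1,x_2,x_3) \in \PAINVND_3(\y)$ with $\x \neq (1,1,1)$ must satisfy $x_1 = x_2 = 1$, and then determine the admissible values of $x_3$ by specializing the case analysis in the proof of Theorem~\ref{theorem: mi triple} to $(y_1,y_2,y_3) = (a,2a,a)$ (so $m = 4a$).

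For the first step, $x_1 = 1$ is immediate from Proposition~\ref{proposition: nondecreasing} applied to $\x \in \PA_3(\y)$. To see $x_2 = 1$, I would assume $x_2 > 1$ and split into two cases. If $1 < x_2 \le a$, then since $\min_i y_i = a$, Lemma~\ref{lemma: minentry} gives $\x \notin \PAINV_3(\y)$ when $a > 1$, while $a = 1$ already forces $x_2 = 1$; either way a contradiction. If $x_2 > a$, then $|\{j : x_j = 1\}| = 1$ since $x_1 = 1 < x_2 \le x_3$, so Theorem~\ref{theorem: unique 1} applies: taking $j = 1$ and $i = 3$ gives $\beta_1(\x) = (x_2 - a,\, x_3 - a) \in \PAINV_2(\y_{\widehat 3}) = \PAINV_2((a,2a))$. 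Since $a < 2a$, Corollary~\ref{corollary: mi pair} says $(a,2a)$ is minimally invariant, so $\PAINV_2((a,2a)) = \{(1,1)\}$, forcing $x_2 = x_3 = 1 + a$. It then remains to rule out $\x = (1, 1+a, 1+a)$, which I would do by exhibiting the rearrangement $\x' = (1+a, 1, 1+a)$: car $1$ parks in spots $[1+a, 2a]$; car $2$ of length $2a$ cannot fit in the size-$a$ gap $[1,a]$ and parks in $[2a+1, 4a]$; then car $3$ of length $a$ and preference $1+a$ finds every spot from $1+a$ onward occupied and fails, so $\x' \notin \PA_3(\y)$ and hence $\x \notin \PAINV_3(\y)$.

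For the second step, with $\x = (1,1,w)$ and $w > 1$, I would quote the equivalences from the proof of Theorem~\ref{theorem: mi triple} for $(y_1,y_2,y_3) = (a,2a,a)$: $(1,1,w) \in \PA_3(\y)$ iff $w \le 1 + 3a$; $(1,w,1) \in \PA_3(\y)$ iff $w \le 1 + a$ or $w = 1 + 2a$; and $(w,1,1) \in \PA_3(\y)$ iff $w \in \{1 + 2a,\, 1 + 3a,\, 1 + a\}$, where condition \ref{it: mi triple (3c)} is active since $y_2 = 2a \ge a = y_3$. Since $\x \in \PAINV_3(\y)$ requires all three simultaneously, a short check — conveniently organized in a small table as in Proposition~\ref{prop: y=(a,a,b)} — shows that the only values $w > 1$ satisfying all three are $w = 1 + a$ and $w = 1 + 2a$. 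Combined with the first step, this yields $\PAINVND_3(\y) = \{(1,1,1),(1,1,1+a),(1,1,1+2a)\}$.

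I expect the main obstacle to be the $x_2 > a$ subcase of the first step: unlike in Proposition~\ref{prop: y=(a,a,b)}, where the natural rearrangement $(1+a,1+a,1)$ already fails to park all cars, here that rearrangement \emph{does} park all cars (because $y_2 = 2a$ exactly fills the tail $[2a+1,4a]$), so one must instead use $(1+a,1,1+a)$ and argue carefully that the length-$a$ car, arriving last, cannot backtrack to the size-$a$ gap it would otherwise fit in. Everything else is routine bookkeeping layered on top of Theorems~\ref{theorem: unique 1} and~\ref{theorem: mi triple} and Corollary~\ref{corollary: mi pair}.
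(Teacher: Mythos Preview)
Your proof is correct, but the first step takes a longer route than the paper's. The paper observes that $\y_{\vert_2}=(a,b)=(a,2a)$ is minimally invariant by Corollary~\ref{corollary: mi pair}, so Corollary~\ref{corollary: append} immediately forces every nondecreasing $\x\in\PAINV_3(\y)$ to have the form $(1,1,w)$; no casework, no Theorem~\ref{theorem: unique 1}, and no need to hunt for a failing rearrangement of $(1,1+a,1+a)$. Your approach instead mimics the proof of Proposition~\ref{prop: y=(a,a,b)}, which \emph{must} go through Theorem~\ref{theorem: unique 1} and Lemma~\ref{lemma: minentry} precisely because there $\y_{\vert_2}=(a,a)$ is \emph{not} minimally invariant; here you are doing extra work that the structural hypothesis $a<b$ lets you avoid. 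That said, your argument is sound, and you correctly caught the genuine wrinkle that $(1+a,1+a,1)$ parks successfully when $b=2a$, so the witnessing rearrangement must be $(1+a,1,1+a)$ instead. The second step---specializing the conditions of Theorem~\ref{theorem: mi triple} to $(a,2a,a)$ and tabulating the solutions for $w$---is exactly what the paper does.
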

\begin{proof}
By Theorem~\ref{theorem: mi triple}, $\y$ is not minimally invariant. 
Therefore, by Theorem \ref{theorem: mi}, there exists $w \in \N_{> 1}$ such that $(1,1,w) \in \PAINV_3(\y)$.
Note moreover that $\y_{\vert_2} = (a, b)$ is minimally invariant by Theorem~\ref{corollary: mi pair}.
Therefore, by Corollary~\ref{corollary: append}, all $\x \in \PAINV_n(\y)$ are in fact of the form $\x = (1, 1, w)$ for some $w \in \N_{> 1}$.

By the proof of Theorem~\ref{theorem: mi triple}, upon substituting $\x = (1, 1, w)$ and $\y = (a, b, a) = (a, 2a, a)$, we have that $\x \in \PAINV_3(\y)$ for $w \in \N_{> 1}$ if and only if
\begin{enumerate}[label=(\arabic*)]
    \item $w\leq 1+3a$ holds, and
    \item at least one of
    \begin{enumerate}[label=(2\alph*)]
        \item $w\leq 1+a$, or
        \item $w=1+2a$ holds, and
    \end{enumerate}
    \item at least one of
    \begin{enumerate}[label=(3\alph*)]
        \item $w=1+2a$, or
        \item $w=1+3a$, or
        \item $2a\geq a$ and $w=1+a$ holds.
    \end{enumerate}
\end{enumerate}
To find the valid solutions for $w$, consider the following table, where ``None'' indicates that selecting the conditions marked ``Yes'' leads to no solutions for $w$. 
Namely, in the table, each row corresponds to a combination of ways in which conditions (1), (2), and (3) may hold, and the solutions for $w$ that arise (if any).
Note that empty cells represent that the listed condition is immaterial to that row.
\begin{align*}
    \begin{array}{|c|c|c|c|c|c|c|}
        \hline
        \text{Cond. (1)} & \text{Cond. (2a)} & \text{Cond. (2b)} & \text{Cond. (3a)} & \text{Cond. (3b)} &  \text{Cond. (3c)}& w \\
        \hline
        \text{Yes} & \text{Yes} & & \text{Yes} & & & \text{None} \\
        \hline
        \text{Yes} & \text{Yes} & & & \text{Yes} & & \text{None} \\
        \hline
        \text{Yes} & \text{Yes} & & & & \text{Yes} & 1+a \\
        \hline
        \text{Yes} & & \text{Yes} & \text{Yes} & & & 1+2a \\
        \hline
        \text{Yes} & & \text{Yes} & & \text{Yes} & & \text{None} \\
        \hline
        \text{Yes} & & \text{Yes} & & & \text{Yes} & \text{None} \\
        \hline 
    \end{array}
\end{align*}
Therefore, $w = 1 + a$ and $w = 1 + 2a$ are the only valid solutions satisfying $w > 1$.
This shows that $\PAINVND_3(\y)= \{ (1,1,1), (1,1,1+a), (1,1,1+2a)\}$.
\end{proof}

\begin{proposition}
\label{prop: y=(a,b,a), b!=2a}
Let $\y=(a,b,a) \in \N^3$ with $b\neq 2a$. 
Then, $\PAINVND_3(\y)= \{ (1,1,1), (1,1,1+a)\}$.
\end{proposition}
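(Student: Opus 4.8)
The plan is to mirror the structure of the proof of Proposition~\ref{prop: y=(a,b,a), b=2a}, adjusting only the arithmetic that depends on the relation between $b$ and $2a$. First I would note that $a < b$ (from the standing hypothesis $a<b<c$), so $\y_{\vert_2} = (a,b)$ is minimally invariant by Corollary~\ref{corollary: mi pair}. By Corollary~\ref{corollary: append}, every nondecreasing $\x \in \PAINV_3(\y)$ must then be of the form $\x = (1,1,w)$ for some $w \in \N$, so the whole problem reduces to deciding for which $w \in \N_{>1}$ one has $(1,1,w) \in \PAINV_3(\y)$.

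Next I would invoke the casework from the proof of Theorem~\ref{theorem: mi triple}, specialized to $\y = (a,b,a)$. It says $(1,1,w) \in \PAINV_3(\y)$ if and only if all three hold: (1) $w \leq 1 + a + b$; (2) $w \leq 1 + a$ or $w = 1 + 2a$; and (3) $w = 1 + b$ or $w = 1 + a + b$ or ($b \geq a$ and $w = 1 + a$). Since $a < b$ is assumed, condition (3c) is simply $w = 1 + a$, so (3) becomes $w \in \{1+a,\, 1+b,\, 1+a+b\}$. I would then enumerate, presenting a compact table in the style of the preceding proofs: in the branch $w \le 1+a$ of (2), the only member of $\{1+a, 1+b, 1+a+b\}$ that is $\le 1+a$ is $1+a$ itself (as $1+b, 1+a+b > 1+a$), and $w = 1+a$ also satisfies (1); in the branch $w = 1+2a$ of (2), matching against (3) would force $1+2a \in \{1+a,\,1+b,\,1+a+b\}$, i.e.\ $a = 0$, or $b = 2a$, or $a = b$, all impossible under the hypotheses $\y \in \N^3$, $b \neq 2a$, and $a < b$. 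Hence $w = 1+a$ is the unique solution with $w > 1$, which gives $\PAINVND_3(\y) = \{(1,1,1),\,(1,1,1+a)\}$.

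There is no real obstacle here; the only point requiring care is to verify that the hypothesis $b \neq 2a$ is genuinely load-bearing — it is exactly what eliminates the pairing of condition (2b) with condition (3a), which in the $b = 2a$ case of Proposition~\ref{prop: y=(a,b,a), b=2a} produced the extra solution $w = 1 + 2a$. Beyond that, the argument is a routine finite case check, so I would keep the write-up short and let the table carry the enumeration.
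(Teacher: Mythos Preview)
Your proposal is correct and follows essentially the same approach as the paper: reduce to preference lists of the form $(1,1,w)$ via Corollary~\ref{corollary: append} (using that $\y_{\vert_2}=(a,b)$ is minimally invariant), then specialize the conditions from the proof of Theorem~\ref{theorem: mi triple} to $\y=(a,b,a)$ and enumerate. Your observation that the hypothesis $b\neq 2a$ is exactly what kills the pairing of condition~(2b) with condition~(3a) is precisely the distinction from Proposition~\ref{prop: y=(a,b,a), b=2a}, matching the paper's table.
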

\begin{proof}
By Theorem~\ref{theorem: mi triple}, $\y$ is not minimally invariant. 
Therefore, by Theorem \ref{theorem: mi}, there exists $w \in \N_{> 1}$ such that $(1,1,w) \in \PAINV_3(\y)$.
Note moreover that $\y_{\vert_2} = (a, b)$ is minimally invariant by Theorem~\ref{corollary: mi pair}.
Therefore, by Corollary~\ref{corollary: append}, all $\x \in \PAINV_n(\y)$ are in fact of the form $\x = (1, 1, w)$ for some $w \in \N_{> 1}$.

By the proof of Theorem~\ref{theorem: mi triple}, upon substituting $\x = (1, 1, w)$ and $\y = (a, b, a) \neq (a, 2a, a)$, we have that $\x \in \PAINV_3(\y)$ for $w \in \N_{> 1}$ if and only if
\begin{enumerate}[label=(\arabic*)]
    \item $w\leq 1+a+b$ holds, and
    \item at least one of
    \begin{enumerate}[label=(2\alph*)]
        \item $w\leq 1+a$, or
        \item $w=1+2a$ holds, and
    \end{enumerate}
    \item at least one of
    \begin{enumerate}[label=(3\alph*)]
        \item $w=1+b$, or
        \item $w=1+a+b$, or
        \item $b\geq a$ and $w=1+a$ holds.
    \end{enumerate}
\end{enumerate}
To find the valid solutions for $w$, consider the following table, where ``None'' indicates that selecting the conditions marked ``Yes'' leads to no solutions for $w$. 
Namely, in the table, each row corresponds to a combination of ways in which conditions (1), (2), and (3) may hold, and the solutions for $w$ that arise (if any).
Note that empty cells represent that the listed condition is immaterial to that row.
\begin{align*}
    \begin{array}{|c|c|c|c|c|c|c|}
        \hline
        \text{Cond. (1)} & \text{Cond. (2a)} & \text{Cond. (2b)} & \text{Cond. (3a)} & \text{Cond. (3b)} &  \text{Cond. (3c)}& w \\
        \hline
        \text{Yes} & \text{Yes} & & \text{Yes} & & & \text{None} \\
        \hline
        \text{Yes} & \text{Yes} & & & \text{Yes} & & \text{None} \\
        \hline
        \text{Yes} & \text{Yes} & & & & \text{Yes} & 1+a \\
        \hline
        \text{Yes} & & \text{Yes} & \text{Yes} & & & \text{None} \\
        \hline
        \text{Yes} & & \text{Yes} & & \text{Yes} & & \text{None} \\
        \hline
        \text{Yes} & & \text{Yes} & & & \text{Yes} & \text{None} \\
        \hline 
    \end{array}
\end{align*}
Therefore, $w = 1 + a$ is the only valid solution satisfying $w > 1$.
This shows that $\PAINVND_3(\y)= \{ (1,1,1), (1,1,1+a)\}$.
\end{proof}

\begin{proposition}
\label{prop: y=(b,a,a), 2a<=b}
Let $\y=(b,a,a) \in \N^3$ with $2a\leq b$. 
Then, $\PAINVND_3(\y)= \{(1,1,1), (1,1,1+a), (1,1,1+2a), (1,1+a,1+a), (1,1+a,1+2a)\}$.
\end{proposition}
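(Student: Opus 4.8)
The plan is to follow the template of the earlier propositions in this appendix. Since $b \ge 2a$ and $a \ge 1$, we have $a < b$, so $\min_{i\in[3]} y_i = a$. Let $\x = (x_1,x_2,x_3) \in \PAINVND_3(\y)$ with $\x \ne (1,1,1)$. Applying Proposition~\ref{proposition: nondecreasing} to $\x \in \PA_3(\y)$ gives $x_1 = 1$, $x_2 \le 1+b$, and $x_3 \le 1+a+b$. I would then split on whether $x_2 = 1$ or $x_2 > 1$.

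If $x_2 = 1$, write $\x = (1,1,w)$ with $w > 1$. Here I would reuse the case analysis from the proof of Theorem~\ref{theorem: mi triple}: substituting $\y = (b,a,a)$ there shows that $(1,1,w) \in \PAINV_3(\y)$ if and only if $w \le 1+a+b$, and ($w \le 1+b$ or $w = 1+a+b$), and $w \in \{1+a,\ 1+2a\}$ (the third condition there, which also allows $w = 1+y_3$ when $y_2 \ge y_3$, yields nothing new since $y_2 = y_3 = a$). As the last requirement already restricts $w$ to $\{1+a,\ 1+2a\}$, and since $a < b$ and $b \ge 2a$ make both of these values satisfy the first two requirements, the valid choices are exactly $w \in \{1+a,\ 1+2a\}$; this contributes $(1,1,1+a)$ and $(1,1,1+2a)$.

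If $x_2 > 1$, then Lemma~\ref{lemma: minentry} (with $k = a$) forces $x_2 > a$, hence also $x_3 \ge x_2 > a$. Since $x_1 = 1$ is the unique entry of $\x$ equal to $1$, Theorem~\ref{theorem: unique 1} applies; taking $j = i = 3$ yields $\beta_3(\x) = (1,\, x_2 - a) \in \PAINV_2(\y_{\widehat{3}}) = \PAINV_2((b,a))$, and since $b \ge a$, Theorem~\ref{theorem: invariant pair} gives $\PAINV_2((b,a)) = \{(1,1),(1,1+a),(1+a,1)\}$; as $(1, x_2 - a)$ is nondecreasing we obtain $x_2 \in \{1+a,\ 1+2a\}$. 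I would rule out $x_2 = 1+2a$ via the rearrangement $(1+2a, 1, x_3)$: the length-$b$ car occupies spots $1+2a, \ldots, m$, the length-$a$ car with preference $1$ then fills spots $1, \ldots, a$, and the remaining length-$a$ car has preference $x_3 \ge x_2 = 1+2a$, so it finds no available block, contradicting $\x \in \PAINV_3(\y)$. Hence $x_2 = 1+a$, and applying Theorem~\ref{theorem: unique 1} again with $j = i = 2$ gives $\beta_2(\x) = (1,\, x_3 - a) \in \PAINV_2((b,a))$, whence $x_3 \in \{1+a,\ 1+2a\}$. Conversely, I would verify directly, by running the parking experiment on all (at most six) rearrangements, that $(1,1+a,1+a)$ and $(1,1+a,1+2a)$ both lie in $\PAINV_3(\y)$: in every rearrangement the length-$b$ car parks either flush against, or entirely beyond, the contiguous block occupied by the two length-$a$ cars, leaving one or two gaps whose sizes are multiples of $a$ which the remaining length-$a$ cars fill.

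Combining the two cases gives $\PAINVND_3(\y) = \{(1,1,1),(1,1,1+a),(1,1,1+2a),(1,1+a,1+a),(1,1+a,1+2a)\}$, as claimed. I expect the main obstacle to be this final direct verification in the case $x_2 > 1$: several rearrangements must be tracked simultaneously, and some care is needed in the borderline case $b = 2a$, where spot $1+2a$ is the leading spot of the length-$b$ car's block rather than an interior one; the remaining steps are routine given Theorem~\ref{theorem: mi triple}, Theorem~\ref{theorem: unique 1}, Theorem~\ref{theorem: invariant pair}, Lemma~\ref{lemma: minentry}, and Proposition~\ref{proposition: nondecreasing}.
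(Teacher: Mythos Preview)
Your proposal is correct and follows essentially the same approach as the paper: both arguments treat the case $x_2=1$ via the conditions derived in the proof of Theorem~\ref{theorem: mi triple}, and the case $x_2>1$ via Lemma~\ref{lemma: minentry} together with Theorem~\ref{theorem: unique 1} and Theorem~\ref{theorem: invariant pair}. Your handling of $x_2>1$ is slightly more streamlined---constraining $x_2$ first, eliminating $x_2=1+2a$ by a single explicit rearrangement, and then constraining $x_3$---whereas the paper splits directly into four subcases according to the positions of $x_2$ and $x_3$ relative to $a$ and $b$; but the ingredients and overall logic are the same.
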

\begin{proof}
Let $\x = (x_1, x_2, x_3) \in \PAINV_3(\y)$ be nondecreasing.
Clearly $x_1 = 1$.
We first assume $x_2, x_3 > 1$ and find the assignments of $x_2, x_3$ satisfying $\x \in \PAINV_3(\y)$.
There are four mutually exclusive possibilities:
\begin{itemize}
    \item[] 
    Case 1: Suppose $b < x_2 \leq x_3$.
    Then, Theorem~\ref{theorem: unique 1} implies $(x_2 - b, x_3 -b) \in \PAINV_2((a,a))$ and $(x_2 - a, x_3 - a) \in \PAINV_2((b,a))$.
    The former implies $x_2 = 1 + b$, whereas the latter implies $x_2 = 1 + a$, a contradiction.
    \item[] 
    Case 2: Suppose $a < x_2 \leq b < x_3$.
    Then, Theorem~\ref{theorem: unique 1} implies $(1, x_3 -b) \in \PAINV_2((a,a))$ and $(x_2 - a, x_3 - a) \in \PAINV_2((b,a))$.
    By Theorem~\ref{theorem: invariant pair}, the former implies either $x_3 = 1 + b$ or $x_3 = 1 + a + b$, whereas the latter implies $x_2 = 1 + a$ and either $x_3 = 1 + a$ or $x_3 = 1 + 2a$.
    All such assignments of $x_2, x_3$ except $x_2 = 1 + a$ and $x_3 = 1 + 2a$ with $2a = b$ yield a contradiction. 
    One can easily verify that $(1,1+a,1+2a)\in\PAINV_3(\y)$ when $2a=b.$
    \item[] 
    Case 3: Suppose $a < x_2 \leq x_3 \leq b$.
    Then, Theorem~\ref{theorem: unique 1} implies $(1, 1) \in \PAINV_2((a,a))$ and $(x_2 - a, x_3 - a) \in \PAINV_2((b,a))$.
    By Theorem~\ref{theorem: invariant pair}, the latter implies $x_2 = 1 + a$ and either $x_3 = 1 + a$ or $x_3 = 1 + 2a$.
    One can easily verify that $(1,1+a,1+a)\in\PAINV_3(\y)$ and $(1,1+a,1+2a)\in\PAINV_3(\y)$.
    \item[] 
    Case 4: Suppose $x_2 \leq a$.
    If $a = 1$ we have $x_2 = 1$, contradicting the assumption that $x_2 > 1$.
    If $a > 1$, Lemma~\ref{lemma: minentry} implies $\x \notin \PAINV_3(\y)$.
\end{itemize}
Therefore, if $\x \in \PAINV_3(\y)$ and $x_2, x_3 > 1$, then $x_1 = 1, x_2 = 1 + a$, and either $x_3 = 1 + a$ or $x_3 = 1 + 2a$.

Next, we assume $x_2 = 1$ and $x_3 > 1$, and find the assignment of $x_3$ satisfying $\x \in \PAINV_3(\y)$.
By the proof of Theorem~\ref{theorem: mi triple}, upon substituting $\x = (1, 1, w)$ and $\y = (b, a, a)$ with $2a \leq b$, we have that $\x \in \PAINV_3(\y)$ for $w \in \N_{> 1}$ if and only if
\begin{enumerate}[label=(\arabic*)]
    \item $w\leq 1+a+b$ holds, and
    \item at least one of
    \begin{enumerate}[label=(2\alph*)]
        \item $w\leq 1+b$, or
        \item $w=1+a+b$ holds, and
    \end{enumerate}
    \item at least one of
    \begin{enumerate}[label=(3\alph*)]
        \item $w=1+a$, or
        \item $w=1+2a$, or
        \item $a\geq a$ and $w=1+a$ holds.
    \end{enumerate}
\end{enumerate}
To find the valid solutions for $w$, consider the following table, where ``None'' indicates that selecting the conditions marked ``Yes'' leads to no solutions for $w$. 
Namely, in the table, each row corresponds to a combination of ways in which conditions (1), (2), and (3) may hold, and the solutions for $w$ that arise (if any).
Note that empty cells represent that the listed condition is immaterial to that row.
\begin{align*}
    \begin{array}{|c|c|c|c|c|c|c|}
        \hline
        \text{Cond. (1)} & \text{Cond. (2a)} & \text{Cond. (2b)} & \text{Cond. (3a)} & \text{Cond. (3b)} &  \text{Cond. (3c)}& w \\
        \hline
        \text{Yes} & \text{Yes} & & \text{Yes} & & & 1+a \\
        \hline
        \text{Yes} & \text{Yes} & & & \text{Yes} & & 1+2a \\
        \hline
        \text{Yes} & \text{Yes} & & & & \text{Yes} & 1+a \\
        \hline
        \text{Yes} & & \text{Yes} & \text{Yes} & & & \text{None} \\
        \hline
        \text{Yes} & & \text{Yes} & & \text{Yes} & & \text{None} \\
        \hline
        \text{Yes} & & \text{Yes} & & & \text{Yes} &  \text{None}\\
        \hline 
    \end{array}
\end{align*}
Therefore, $w = 1 + a$ and $w = 1 + 2a$ are the only valid solutions satisfying $w > 1$.
This shows that $\PAINVND_3(\y)= \{(1,1,1), (1,1,1+a), (1,1,1+2a), (1,1+a,1+a), (1,1+a,1+2a)\}$.
\end{proof}

\begin{proposition}
\label{prop: y=(b,a,a), 2a>b}
Let $\y=(b,a,a) \in \N^3$ with $2a>b$.
Then, $\PAINVND_3(\y)=\{(1,1,1), (1,1,1+a), (1,1+a,1+a)\}$.
\end{proposition}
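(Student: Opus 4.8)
The plan is to follow the template of the preceding propositions. Fix a nondecreasing $\x=(x_1,x_2,x_3)\in\PAINV_3(\y)$ with $\x\neq(1,1,1)$; since $\PAINV_3(\y)\subseteq\PA_3(\y)$, Proposition~\ref{proposition: nondecreasing} forces $x_1=1$. Observe that $a<b<2a$ already forces $a\geq 2$, so with $k=\min\{b,a,a\}=a$ Lemma~\ref{lemma: minentry} applies to any entry of $\x$ that exceeds $1$. I would then split into the cases $x_2>1$ and $x_2=1$.

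In the case $x_2,x_3>1$: Lemma~\ref{lemma: minentry} gives $x_2,x_3>a$, and since exactly one entry of $\x$ equals $1$, Theorem~\ref{theorem: unique 1} applies. Resolving the $\max$'s using $x_2,x_3>a$, one has $\beta_3(\x)=(1,x_2-a)$ and $\beta_2(\x)=(1,x_3-a)$, and both must lie in $\PAINV_2(\y_{\widehat 3})=\PAINV_2((b,a))$, which by Theorem~\ref{theorem: invariant pair}(2) equals $\{(1,1),(1,a+1),(a+1,1)\}$; hence $x_2,x_3\in\{a+1,2a+1\}$. Next, $\beta_1(\x)=(\max\{1,x_2-b\},\max\{1,x_3-b\})$ must lie in $\PAINV_2(\y_{\widehat 1})=\PAINV_2((a,a))=\{(1,1),(1,a+1),(a+1,1)\}$. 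An entry equal to $a+1$ contributes a coordinate $1$ (since $a+1\leq b$), while an entry equal to $2a+1$ contributes $2a+1-b$, which satisfies $2\leq 2a+1-b\leq a$ because $a<b<2a$ — so it is neither $1$ nor $a+1$ and matches no coordinate of $\{(1,1),(1,a+1),(a+1,1)\}$. This eliminates every choice except $x_2=x_3=a+1$. It then remains to verify that $(1,a+1,a+1)\in\PAINV_3(\y)$ by checking its three distinct rearrangements $(1,a+1,a+1)$, $(a+1,1,a+1)$, $(a+1,a+1,1)$; each parks all three cars, using $a+1\leq b$ and $b+2a=m$.

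In the case $x_2=1$: here $\x=(1,1,w)$ with $w>1$, and I would argue exactly as in Propositions~\ref{prop: y=(a,b,a), b=2a} and~\ref{prop: y=(b,a,a), 2a<=b} by specializing the case analysis in the proof of Theorem~\ref{theorem: mi triple} to $\y=(b,a,a)$. This yields that $(1,1,w)\in\PAINV_3(\y)$ iff (1) $w\leq 1+a+b$, and (2) $w\leq 1+b$ or $w=1+a+b$, and (3) $w\in\{1+a,1+2a\}$. A short table shows $w=1+a$ meets all three (using $a<b$), while $w=1+2a$ violates (2) since $2a>b$ (and $1+2a\neq 1+a+b$ since $a\neq b$); hence $\x=(1,1,1+a)$ is the only option with $w>1$. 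Combining the two cases gives $\PAINVND_3(\y)=\{(1,1,1),(1,1,1+a),(1,1+a,1+a)\}$.

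The main obstacle is the case $x_2,x_3>1$: one must correctly combine the several instances of Theorem~\ref{theorem: unique 1} (all pairs $\beta_j(\x)\in\PAINV_2(\y_{\widehat i})$), resolve the $\max$'s, and use the strict inequality $b<2a$ to discard the spurious candidates with an entry equal to $2a+1$, after which the membership of the survivor $(1,a+1,a+1)$ still has to be confirmed by a direct parking check. The case $x_2=1$ is routine bookkeeping once the Theorem~\ref{theorem: mi triple} conditions are specialized.
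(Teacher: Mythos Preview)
Your proposal is correct and follows the same two-case template as the paper's proof (split on whether $x_2>1$, invoke Theorem~\ref{theorem: unique 1} and Lemma~\ref{lemma: minentry} in the first case, and specialize the conditions from the proof of Theorem~\ref{theorem: mi triple} in the second). The only difference is organizational: in the $x_2,x_3>1$ case the paper breaks into four sub-cases according to how $x_2,x_3$ sit relative to $a$ and $b$, whereas you first note $a\geq 2$ (so Lemma~\ref{lemma: minentry} immediately gives $x_2,x_3>a$) and then combine the constraints from $\beta_2,\beta_3\in\PAINV_2((b,a))$ with $\beta_1\in\PAINV_2((a,a))$ to eliminate $2a+1$ in one stroke---a somewhat tidier route to the same conclusion.
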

\begin{proof}
Let $\x = (x_1, x_2, x_3) \in \PAINV_3(\y)$ be nondecreasing.
Clearly $x_1 = 1$.
We first assume $x_2, x_3 > 1$ and find the assignments of $x_2, x_3$ satisfying $\x \in \PAINV_3(\y)$.
There are four mutually exclusive possibilities:
\begin{itemize}
    \item[] 
    Case 1: Suppose $b < x_2 \leq x_3$.
    Then, Theorem~\ref{theorem: unique 1} implies $(x_2 - b, x_3 -b) \in \PAINV_2((a,a))$ and $(x_2 - a, x_3 - a) \in \PAINV_2((b,a))$.
    The former implies $x_2 = 1 + b$, whereas the latter implies $x_2 = 1 + a$, a contradiction.
    \item[] 
    Case 2: Suppose $a < x_2 \leq b < x_3$.
    Then, Theorem~\ref{theorem: unique 1} implies $(1, x_3 -b) \in \PAINV_2((a,a))$ and $(x_2 - a, x_3 - a) \in \PAINV_2((b,a))$.
    By Theorem~\ref{theorem: invariant pair}, the former implies either $x_3 = 1 + b$ or $x_3 = 1 + a + b$, whereas the latter implies $x_2 = 1 + a$ and either $x_3 = 1 + a$ or $x_3 = 1 + 2a$.
    All such assignments of $x_2, x_3$ yield a contradiction since $2a > b$.
    \item[] 
    Case 3: Suppose $a < x_2 \leq x_3 \leq b$.
    Then, Theorem~\ref{theorem: unique 1} implies $(1, 1) \in \PAINV_2((a,a))$ and $(x_2 - a, x_3 - a) \in \PAINV_2((b,a))$.
    By Theorem~\ref{theorem: invariant pair}, the latter implies $x_2 = 1 + a$ and either $x_3 = 1 + a$ or $x_3 = 1 + 2a$.
    Assigning $x_3 = 1 + 2a$ yields a contradiction since $2a > b$, so the only valid assignment is $x_2 = x_3 = 1 + a$.
    One can easily verify that $(1,1+a,1+a)\in\PAINV_3(\y)$.
    \item[] 
    Case 4: Suppose $x_2 \leq a$.
    If $a = 1$ we have $x_2 = 1$, contradicting the assumption that $x_2 > 1$.
    If $a > 1$, Lemma~\ref{lemma: minentry} implies $\x \notin \PAINV_3(\y)$.
\end{itemize}
Therefore, if $\x \in \PAINV_3(\y)$ and $x_2, x_3 > 1$, then $x_1 = 1, x_2 = 1 + a$, and $x_3 = 1 + a$.

Next, we assume $x_2 = 1$ and $x_3 > 1$, and find the assignment of $x_3$ satisfying $\x \in \PAINV_3(\y)$.
By the proof of Theorem~\ref{theorem: mi triple}, upon substituting $\x = (1, 1, w)$ and $\y = (b, a, a)$ with $2a > b$, we have that $\x \in \PAINV_3(\y)$ for $w \in \N_{> 1}$ if and only if
\begin{enumerate}[label=(\arabic*)]
    \item $w\leq 1+a+b$ holds, and
    \item at least one of
    \begin{enumerate}[label=(2\alph*)]
        \item $w\leq 1+b$, or
        \item $w=1+a+b$ holds, and
    \end{enumerate}
    \item at least one of
    \begin{enumerate}[label=(3\alph*)]
        \item $w=1+a$, or
        \item $w=1+2a$, or
        \item $a\geq a$ and $w=1+a$ holds.
    \end{enumerate}
\end{enumerate}
To find the valid solutions for $w$, consider the following table, where ``None'' indicates that selecting the conditions marked ``Yes'' leads to no solutions for $w$. 
Namely, in the table, each row corresponds to a combination of ways in which conditions (1), (2), and (3) may hold, and the solutions for $w$ that arise (if any).
Note that empty cells represent that the listed condition is immaterial to that row.
\begin{align*}
    \begin{array}{|c|c|c|c|c|c|c|}
          \hline
        \text{Cond. (1)} & \text{Cond. (2a)} & \text{Cond. (2b)} & \text{Cond. (3a)} & \text{Cond. (3b)} &\text{Cond. (3c)}& w \\
       \hline
      \text{Yes} & \text{Yes} & & \text{Yes} & & & 1+a \\
        \hline
        \text{Yes} & \text{Yes} & & & \text{Yes} & & \text{None} \\
        \hline
        \text{Yes} & \text{Yes} & & & & \text{Yes} & 1+a \\
        \hline
        \text{Yes} & & \text{Yes} & \text{Yes} & & & \text{None} \\
        \hline
        \text{Yes} & & \text{Yes} & & \text{Yes} & & \text{None} \\
        \hline
        \text{Yes} & & \text{Yes} & & & \text{Yes} &  \text{None}\\
        \hline 
    \end{array}
\end{align*}
Therefore, $w = 1 + a$ is the only valid solution satisfying $w > 1$.
This shows that $\PAINVND_3(\y)= \{(1,1,1), (1,1,1+a), (1,1+a,1+a)\}$.
\end{proof}

\begin{proposition}
\label{prop: y=(a,b,b)}
Let $\y=(a,b,b) \in \N^3$. 
Then, $\PAINVND_3(\y)= \{(1,1,1)\}$.
\end{proposition}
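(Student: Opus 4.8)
The plan is to obtain this immediately from the characterization of minimally invariant triples. Recall that $a,b,c\in\N$ with $a<b<c$ are fixed throughout the appendix, so $\y=(a,b,b)$ has $y_1=a$, $y_2=b$, and $y_3=b$. Theorem~\ref{theorem: mi triple} asserts that $\y=(y_1,y_2,y_3)$ is minimally invariant if and only if $y_1<y_2$, $y_1<y_3$, and $y_1+y_3\neq y_2$. Here the first two conditions both read $a<b$ and hold by hypothesis, while the third reads $a+b\neq b$, i.e. $a\neq 0$, and holds since $a\in\N$. Therefore $\y$ is minimally invariant, that is $\PAINV_3(\y)=\{(1,1,1)\}$, and restricting to nondecreasing preference lists gives $\PAINVND_3(\y)=\{(1,1,1)\}$, as claimed.

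For an argument in the self-contained style of the neighbouring propositions, one may instead proceed as follows. Let $\x=(x_1,x_2,x_3)\in\PAINV_3(\y)$ be nondecreasing with $\x\neq(1,1,1)$; then Proposition~\ref{proposition: nondecreasing} forces $x_1=1$. If $x_2>1$, then Lemma~\ref{lemma: minentry} (whose minimum car length is $a$) forces $x_2>a$; since $\x$ then has a unique entry equal to $1$, Theorem~\ref{theorem: unique 1} applies and $\beta_1(\x)=(x_2-a,\,x_3-a)\in\PAINV_2(\y_{\widehat{2}})=\PAINV_2((a,b))$. Since $a<b$, Corollary~\ref{corollary: mi pair} makes $(a,b)$ minimally invariant, so $x_2=x_3=1+a$. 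But the rearrangement $(1+a,1+a,1)$ then fails to park every car: on the street of length $a+2b$, cars $1$ and $2$ occupy spots $1+a,\ldots,2a$ and $2a+1,\ldots,2a+b$ respectively, leaving two gaps of sizes $a$ and $b-a$, each strictly less than $y_3=b$, so car $3$ cannot park. Hence $x_2=1$. Specializing the $w$-analysis in the proof of Theorem~\ref{theorem: mi triple} to $\y=(a,b,b)$, a rearrangement $(1,1,w)\in\PAINV_3(\y)$ with $w>1$ would need ($w\le 1+a$ or $w=1+a+b$) coming from the rearrangement with $w$ in the middle, together with ($w=1+b$ or $w=1+2b$) coming from the rearrangement with $w$ first; since $a<b$ these are incompatible, so $x_3=1$, contradicting $\x\neq(1,1,1)$.

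There is essentially no obstacle here: once Theorem~\ref{theorem: mi triple} is available the claim is a one-line verification of three arithmetic conditions. In the self-contained variant, the only step needing a short computation is ruling out $\x=(1,1+a,1+a)$, which rests on the post-car-$2$ right gap having size exactly $b-a<b$.
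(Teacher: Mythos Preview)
Your first paragraph is exactly the paper's proof: the paper simply writes ``The result follows from Theorem~\ref{theorem: mi triple},'' and you have correctly verified the three hypotheses $y_1<y_2$, $y_1<y_3$, $y_1+y_3\neq y_2$ for $\y=(a,b,b)$. Your second, self-contained alternative is also correct and mirrors the template used for the non-minimally-invariant cases in the appendix; it is more work than needed here but is a valid independent argument.
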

\begin{proof}
The result follows from Theorem~\ref{theorem: mi triple}.
\end{proof}

\begin{proposition}
\label{prop: y=(b,a,b)}
Let $\y=(b,a,b) \in \N^3$. 
Then, $\PAINVND_3(\y)= \{(1,1,1), (1,1,1+a)\}$.
\end{proposition}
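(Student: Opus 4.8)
The plan is to follow the pattern of the neighboring propositions in this appendix, restricting attention to nondecreasing $\x=(x_1,x_2,x_3)\in\PAINV_3(\y)$ with $\x\neq(1,1,1)$; such an $\x$ has $x_1=1$ by Proposition~\ref{proposition: nondecreasing}, and I would split into the cases $x_2>1$ and $x_2=1$. A preliminary point worth flagging: here $\y_{\vert_2}=(b,a)$ is \emph{not} minimally invariant (since $b>a$, by Corollary~\ref{corollary: mi pair}), so --- as in Proposition~\ref{prop: y=(b,a,a), 2a<=b} and unlike the $(a,b,a)$ cases --- the shortcut through Corollary~\ref{corollary: append} is unavailable, and the case $x_2>1$ must be handled directly via Theorem~\ref{theorem: unique 1}.

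In the case $x_2>1$ we have $1<x_2\le x_3$ and exactly one entry of $\x$ equals $1$, so Theorem~\ref{theorem: unique 1} applies. Taking $i=1$ and using that $\y_{\widehat{1}}=(a,b)$ is minimally invariant (Corollary~\ref{corollary: mi pair}), we get $\beta_j(\x)\in\PAINV_2((a,b))=\{(1,1)\}$ for every $j$; in particular $\beta_2(\x)=(1,\max\{1,x_3-a\})=(1,1)$ forces $x_3\le 1+a$, hence $x_2,x_3\in\{2,\ldots,a+1\}$. Since $\min\{b,a,b\}=a$, Lemma~\ref{lemma: minentry} forbids any entry in $\{2,\ldots,a\}$, leaving only $\x=(1,1+a,1+a)$. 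I would then eliminate this last candidate by hand: under the rearrangement $(1+a,1+a,1)$, car $1$ (length $b$) takes spots $a+1,\ldots,a+b$, car $2$ (length $a$) is pushed to spots $a+b+1,\ldots,2a+b$ (possible since $a<b$), and the only free spots are the two runs $\{1,\ldots,a\}$ and $\{2a+b+1,\ldots,2b+a\}$, of lengths $a$ and $b-a$, both strictly less than $b$; so car $3$ (length $b$) fails to park, whence $\x\notin\PAINV_3(\y)$ --- a contradiction. Thus this case is empty.

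In the case $x_2=1$ we have $\x=(1,1,w)$ with $w\in\N_{> 1}$, and I would apply the analysis from the proof of Theorem~\ref{theorem: mi triple}: specializing $\y=(b,a,b)$, the three rearrangements of $(1,1,w)$ all park exactly when $w\le 1+a+b$, and ($w\le 1+b$ or $w=1+2b$), and ($w=1+a$ or $w=1+a+b$) --- alternative \ref{it: mi triple (3c)} being vacuous since it requires $a\ge b$. A short case check (organized in a table, as in the surrounding proofs) shows $w=1+a$ is the unique solution with $w>1$: $w=1+a+b$ fails the middle clause, and $w=1+2b$ can equal neither $1+a$ nor $1+a+b$ because $a<b$. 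Combining the two cases gives $\PAINVND_3(\y)=\{(1,1,1),(1,1,1+a)\}$. The main obstacle is precisely the elimination of $(1,1+a,1+a)$ in the first case: the sieve from Theorem~\ref{theorem: unique 1} together with Lemma~\ref{lemma: minentry} does not rule it out on its own, and one must notice that car $3$'s length $b$ exceeds the length-$a$ gap it leaves behind --- the feature that makes $(b,a,b)$ behave differently from $(b,a,a)$.
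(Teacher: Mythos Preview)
Your proof is correct and follows essentially the same two-case template as the paper's proof. The only difference is in the first case: the paper applies Theorem~\ref{theorem: unique 1} with $\y_{\widehat{2}}=(b,b)$ to get $(x_2-a,x_3-a)\in\PAINV_2((b,b))$, leaving two candidates $(1,1+a,1+a)$ and $(1,1+a,1+a+b)$ to eliminate by hand, whereas your choice of $i=1$ with $\y_{\widehat{1}}=(a,b)$ (minimally invariant) immediately bounds $x_3\le 1+a$ and leaves only $(1,1+a,1+a)$ --- a small but genuine streamlining. The second case and the explicit elimination of $(1,1+a,1+a)$ via $(1+a,1+a,1)$ are identical to the paper's argument.
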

\begin{proof}
Let $\x = (x_1, x_2, x_3) \in \PAINVND_3(\y)$.
Clearly $x_1 = 1$.
We now show that $x_2=1$ by contradiction assuming that $x_2>1$ and considering the following two mutually exclusive possibilities:
\begin{itemize}
    \item[] 
    Case 1: Suppose $x_2 > a$.
    Then, Theorem~\ref{theorem: unique 1} implies $(x_2 - a, x_3 - a) \in \PAINV_2((b,b))$.
    By Theorem~\ref{theorem: invariant pair}, this implies $x_2 = 1 + a$ and either $x_3 = 1 + a$ or $x_3 = 1 + a + b$.
    However, $\x = (1, 1 + a, 1 + a) \notin \PAINV_3(\y)$ since its rearrangement $\x' = (1 + a, 1 + a, 1) \notin \PA_3(\y)$.
    Similarly, $\x = (1, 1 + a, 1 + a + b) \notin \PAINV_3(\y)$  since its rearrangement $\x' = (1 + a, 1 + a + b, 1) \notin \PA_3(\y)$.
    \item[] 
    Case 2: Suppose $x_2 \leq a$.
    If $a = 1$ we have $x_2 = 1$, contradicting the assumption that $x_2 > 1$.
    If $a > 1$, Lemma~\ref{lemma: minentry} implies $\x \notin \PAINV_3(\y)$.
\end{itemize}
Therefore, if $\x \in \PAINV_3(\y)$ is nondecreasing, then $x_1 = x_2 = 1$.
By the proof of Theorem~\ref{theorem: mi triple}, upon substituting $\x = (1, 1, w)$ and $\y = (b, a, b)$, we have that $\x \in \PAINV_3(\y)$ for $w \in \N_{> 1}$ if and only if
\begin{enumerate}[label=(\arabic*)]
    \item $w\leq 1+a+b$ holds, and
    \item at least one of
    \begin{enumerate}[label=(2\alph*)]
        \item $w\leq 1+b$, or
        \item $w=1+2b$ holds, and
    \end{enumerate}
    \item at least one of
    \begin{enumerate}[label=(3\alph*)]
        \item $w=1+a$, or
        \item $w=1+a+b$ holds.
    \end{enumerate}
\end{enumerate}
(We omit \ref{it: mi triple (3c)} since it requires $a \geq b$, a contradiction.)
To find the valid solutions for $w$, consider the following table, where ``None'' indicates that selecting the conditions marked ``Yes'' leads to no solutions for $w$. 
Namely, in the table, each row corresponds to a combination of ways in which conditions (1), (2), and (3) may hold, and the solutions for $w$ that arise (if any).
Note that empty cells represent that the listed condition is immaterial to that row.
\begin{align*}
    \begin{array}{|c|c|c|c|c|c|}
        \hline
       \text{Cond. (1)} & \text{Cond. (2a)} & \text{Cond. (2b)} & \text{Cond. (3a)} & \text{Cond. (3b)} & w \\
         \hline
        \text{Yes} & \text{Yes} & & \text{Yes} & & 1+a \\
        \hline
        \text{Yes} & \text{Yes} & & & \text{Yes} & \text{None} \\
        \hline
        \text{Yes} & & \text{Yes} & \text{Yes} & & \text{None} \\
        \hline
        \text{Yes} & & \text{Yes} & & \text{Yes} & \text{None} \\
        \hline 
    \end{array}
\end{align*}
Therefore, $w = 1 + a$ is the only valid solution satisfying $w > 1$.
This shows that $\PAINVND_3(\y) = \{(1,1,1), (1,1,1+a)\}$.
\end{proof}

\begin{proposition}
\label{prop: y=(b,b,a)}
Let $\y=(b,b,a) \in \N^3$. 
Then, $\PAINVND_3(\y)=\{(1,1,1), (1,1,1+a), (1,1,1+b), (1,1,1+a+b)\}$.
\end{proposition}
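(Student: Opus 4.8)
The plan is to follow the two-part template used in the preceding propositions. Since $(1,1,1)\in\PAINV_3(\y)$ always, I only need to treat $\x\neq(1,1,1)$. First I would show that every nondecreasing $\x=(x_1,x_2,x_3)\in\PAINV_3(\y)$ satisfies $x_1=x_2=1$; then, writing $\x=(1,1,w)$, I would read off the admissible values of $w$ from the proof of Theorem~\ref{theorem: mi triple} specialized to $\y=(b,b,a)$.

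For the first step, $x_1=1$ is immediate, so suppose $x_2>1$. Since $\min_{i\in[3]}y_i=a$, Lemma~\ref{lemma: minentry} forces $x_2>a$, and I would split into the cases $x_2>b$ and $a<x_2\leq b$. If $x_2>b$, then $x_3\geq x_2>b$, and since $\x$ has a unique entry equal to $1$ (in a length-$b$ slot), Theorem~\ref{theorem: unique 1} gives $(x_2-b,\,x_3-b)\in\PAINV_2(\y_{\widehat{i}})$ for every $i\in[3]$; taking $i=3$ and $i=1$ yields membership in $\PAINV_2((b,b))$ and in $\PAINV_2((b,a))$, and intersecting the two lists of Theorem~\ref{theorem: invariant pair} (using that the pair is nondecreasing and that $a\neq b$) pins $\x$ down to $(1,\,1+b,\,1+b)$. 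I would discard this candidate because its rearrangement $(1+b,\,1+b,\,1)$ is not in $\PA_3(\y)$: after car $1$ parks in $[1+b,2b]$, car $2$ (preference $1+b$) cannot fit in the $a<b$ spots past it. If instead $a<x_2\leq b$, I would pass to the rearrangement $\x'=(x_2,x_3,1)\in\PAINV_3(\y)$, whose unique $1$ sits in the length-$a$ slot, so that Theorem~\ref{theorem: unique 1} gives $(x_2-a,\,x_3-a)\in\PAINV_2(\y_{\widehat{i}})$ for every $i$; taking $i=3$ and $i=1$ and applying Theorem~\ref{theorem: invariant pair} forces $\x=(1,\,1+a,\,1+a)$ (note $1+a\leq b$ since $a<b$). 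This is eliminated since its rearrangement $(1+a,\,1,\,1+a)$ is not in $\PA_3(\y)$: cars $1$ and $2$ occupy every spot from $a+1$ to $2b+a$, so car $3$ of length $a$ with preference $1+a$ fails. Hence $x_1=x_2=1$.

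For the second step, substituting $\x=(1,1,w)$ and $\y=(b,b,a)$ into conditions \ref{it: mi triple (1)}, \ref{it: mi triple (2)}, and \ref{it: mi triple (3)} from the proof of Theorem~\ref{theorem: mi triple} gives that $(1,1,w)\in\PAINV_3(\y)$ for $w\in\N_{>1}$ if and only if: $w\leq 1+2b$; and $w\leq 1+b$ or $w=1+a+b$; and $w=1+b$ or $w=1+a+b$ or ($b\geq a$ and $w=1+a$). The last condition restricts $w$ to $\{1+a,\,1+b,\,1+a+b\}$, and I would confirm with a short case table (using $a<b$, hence $1+a\leq b$ and $a+b<2b$) that each of these three values satisfies the other two conditions. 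This yields $\PAINVND_3(\y)=\{(1,1,1),\,(1,1,1+a),\,(1,1,1+b),\,(1,1,1+a+b)\}$.

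The main obstacle is the first step, and within it the case $a<x_2\leq b$: it relies on the index-permutation flexibility of Theorem~\ref{theorem: unique 1} and then on a careful hand-check that the surviving candidate $(1,1+a,1+a)$ fails invariance through the specific rearrangement $(1+a,1,1+a)$. Everything else is routine bookkeeping with Theorem~\ref{theorem: invariant pair} and the case table of the second step.
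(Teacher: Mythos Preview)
Your proof is correct and follows the same two-step template as the paper: first reduce to $x_1=x_2=1$ via Theorem~\ref{theorem: unique 1} and Lemma~\ref{lemma: minentry}, then read off the admissible $w$ from the conditions in the proof of Theorem~\ref{theorem: mi triple}. The only organizational difference is in the first step: the paper treats all of $x_2>a$ in a single case (applying Theorem~\ref{theorem: unique 1} with $i=3$ alone to get the two candidates $(1,1+a,1+a)$ and $(1,1+a,1+a+b)$, each eliminated by hand), whereas you split into $x_2>b$ versus $a<x_2\leq b$ and use both $i=1$ and $i=3$. Your extra constraint from $i=1$ dispatches $(1,1+a,1+a+b)$ without a hand-check, but your added subcase $x_2>b$ introduces its own candidate $(1,1+b,1+b)$ to eliminate---so the net work is the same, and the paper's single case is slightly cleaner.
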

\begin{proof}
Let $\x = (x_1, x_2, x_3) \in \PAINV_3(\y)$ be nondecreasing.
Clearly $x_1 = 1$.
We now show that $x_2=1$ by contradiction assuming that $x_2>1$ and considering the following two mutually exclusive possibilities:
\begin{itemize}
    \item[] 
    Case 1: Suppose $x_2 > a$.
    Then, Theorem~\ref{theorem: unique 1} implies $(x_2 - a, x_3 - a) \in \PAINV_2((b,b))$.
    By Theorem~\ref{theorem: invariant pair}, this implies $x_2 = 1 + a$ and either $x_3 = 1 + a$ or $x_3 = 1 + a + b$.
    However, $\x = (1, 1 + a, 1 + a) \notin \PAINV_3(\y)$ since its rearrangement $\x' = (1 + a, 1, 1 + a) \notin \PA_3(\y)$.
    Similarly, $\x = (1, 1 + a, 1 + a + b) \notin \PAINV_3(\y)$  since its rearrangement $\x' = (1 + a, 1, 1 + a + b) \notin \PA_3(\y)$.
    \item[] 
    Case 2: Suppose $x_2 \leq a$.
    If $a = 1$ we have $x_2 = 1$, contradicting the assumption that $x_2 > 1$.
    If $a > 1$, Lemma~\ref{lemma: minentry} implies $\x \notin \PAINV_3(\y)$.
\end{itemize}
Therefore, if $\x \in \PAINV_3(\y)$ is nondecreasing, then $x_1 = x_2 = 1$.
By the proof of Theorem~\ref{theorem: mi triple}, upon substituting $\x = (1, 1, w)$ and $\y = (b, b, a)$, we have that $\x \in \PAINV_3(\y)$ for $w \in \N_{> 1}$ if and only if
\begin{enumerate}[label=(\arabic*)]
    \item $w\leq 1+2b$ holds, and
    \item at least one of
    \begin{enumerate}[label=(2\alph*)]
        \item $w\leq 1+b$, or
        \item $w=1+a+b$ holds, and
    \end{enumerate}
    \item at least one of
    \begin{enumerate}[label=(3\alph*)]
        \item $w=1+b$, or
        \item $w=1+a+b$, or
        \item $b\geq a$ and $w=1+a$ holds.
    \end{enumerate}
\end{enumerate}
To find the valid solutions for $w$, consider the following table, where ``None'' indicates that selecting the conditions marked ``Yes'' leads to no solutions for $w$. 
Namely, in the table, each row corresponds to a combination of ways in which conditions (1), (2), and (3) may hold, and the solutions for $w$ that arise (if any).
Note that empty cells represent that the listed condition is immaterial to that row.
\begin{align*}
    \begin{array}{|c|c|c|c|c|c|c|}
        \hline
        \text{Cond. (1)} & \text{Cond. (2a)} & \text{Cond. (2b)} & \text{Cond. (3a)} & \text{Cond. (3b)} &\text{Cond. (3c)}& w \\
       \hline
        \text{Yes} & \text{Yes} & & \text{Yes} & & & 1+b \\
        \hline
        \text{Yes} & \text{Yes} & & & \text{Yes} & & \text{None} \\
        \hline
        \text{Yes} & \text{Yes} & & & & \text{Yes} & 1+a \\
        \hline
        \text{Yes} & & \text{Yes} & \text{Yes} & & & \text{None} \\
        \hline
        \text{Yes} & & \text{Yes} & & \text{Yes} & & 1+a+b \\
        \hline
        \text{Yes} & & \text{Yes} & & & \text{Yes} &  \text{None}\\
        \hline 
    \end{array}
\end{align*}
Therefore, $w = 1 + a$, $w = 1 + b$, and $w = 1 + a + b$ are the only valid solutions satisfying $w > 1$.
This shows that $\PAINVND_3(\y)=\{(1,1,1), (1,1,1+a), (1,1,1+b), (1,1,1+a+b)\}$.
\end{proof}

\begin{proposition}
\label{prop: y=(a,b,c)}
Let $\y=(a,b,c) \in \N^3$. 
Then, $\PAINVND_3(\y)=\{(1,1,1)\}$.
\end{proposition}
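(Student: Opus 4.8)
The plan is to invoke Theorem~\ref{theorem: mi triple}, which characterizes minimally invariant triples: $\y = (y_1, y_2, y_3) \in \N^3$ is minimally invariant if and only if $y_1 < y_2$, $y_1 < y_3$, and $y_1 + y_3 \neq y_2$. Writing $\y = (a,b,c)$ with $a < b < c$, I would set $y_1 = a$, $y_2 = b$, $y_3 = c$ and verify the three conditions in turn: $y_1 = a < b = y_2$ is immediate from the hypothesis; $y_1 = a < c = y_3$ follows from $a < b < c$; and $y_1 + y_3 = a + c \neq b = y_2$ holds because $a \geq 1$ and $b < c$ give $a + c > c > b$, so in particular $a + c \neq b$. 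Since all three conditions are met, Theorem~\ref{theorem: mi triple} yields that $\y$ is minimally invariant, i.e.\ $\PAINV_3(\y) = \{(1^3)\}$, and restricting to nondecreasing preference lists gives $\PAINVND_3(\y) = \{(1,1,1)\}$, as claimed. This is the same one-line reduction used in the proof of Proposition~\ref{prop: y=(a,b,b)}.

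There is essentially no obstacle here: this proposition is the ``generic'' row of the classification table, in which the strict inequalities $a < b < c$ rule out every coincidence among the car lengths that Theorem~\ref{theorem: mi triple} would need in order to admit a nontrivial invariant preference list. If one preferred a self-contained argument, one could instead note that $a = \min_i y_i$ is the strict minimum, so by Lemma~\ref{lemma: minentry} any $\x \in \PAINV_3(\y)$ has every entry either equal to $1$ or strictly greater than $a$, reducing (via Corollary~\ref{corollary: append}, since $(a,b)$ is minimally invariant by Corollary~\ref{corollary: mi pair}) to checking whether some $w > 1$ makes $(1,1,w)$ invariant, and then ruling this out using the conditions in the proof of Theorem~\ref{theorem: mi triple}; but this merely re-derives that theorem in the present special case, so deferring to Theorem~\ref{theorem: mi triple} is the cleaner route.
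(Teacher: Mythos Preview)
Your proof is correct and follows exactly the paper's own approach: the paper's proof of this proposition is the single line ``The result follows from Theorem~\ref{theorem: mi triple}.'' Your explicit verification that $a<b$, $a<c$, and $a+c\neq b$ (since $a+c>c>b$) simply spells out why that invocation succeeds.
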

\begin{proof}
The result follows from Theorem \ref{theorem: mi triple}.
\end{proof}

\begin{proposition}
\label{prop: y=(a,c,b), a+b=c}
Let $\y=(a,c,b) \in \N^3$ with $a+b=c$. 
Then, $\PAINVND_3(\y)=\{(1,1,1), (1,1,1+a+b)\}$.
\end{proposition}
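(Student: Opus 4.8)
The plan is to follow the structure of Propositions~\ref{prop: y=(b,a,b)} and~\ref{prop: y=(b,b,a)}: first show that every nondecreasing $\x=(x_1,x_2,x_3)\in\PAINV_3(\y)$ with $\x\neq(1,1,1)$ satisfies $x_1=x_2=1$, and then read off the admissible values of the last coordinate from the conditions established in the proof of Theorem~\ref{theorem: mi triple}. Observe at the outset that $a<b<c=a+b$ forces $\min\{y_1,y_2,y_3\}=a$ and $c>b$, which is what makes the argument work.

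For the first part, Proposition~\ref{proposition: nondecreasing} gives $x_1=1$, so I would suppose for contradiction that $x_2>1$. If $1<x_2\le a$, Lemma~\ref{lemma: minentry} immediately yields $\x\notin\PAINV_3(\y)$ (this subcase is vacuous when $a=1$). If instead $x_2>a$, then $\x$ has a unique entry equal to $1$, so Theorem~\ref{theorem: unique 1} applied with both indices equal to $1$ gives $\beta_1(\x)=(x_2-a,\,x_3-a)\in\PAINV_2(\y_{\widehat{1}})=\PAINV_2((c,b))$. Since $c>b$, Theorem~\ref{theorem: invariant pair}(2) gives $\PAINV_2((c,b))=\{(1,1),(1,b+1),(b+1,1)\}$, and as $(x_2-a,x_3-a)$ is nondecreasing it must equal $(1,1)$ or $(1,b+1)$, i.e.\ $\x=(1,1+a,1+a)$ or $\x=(1,1+a,1+a+b)$. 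I would rule out both by a direct parking experiment on a bad rearrangement: for $(1,1+a,1+a)$ the rearrangement $(1+a,1+a,1)$ fails because car~$1$ occupies $[1+a,2a]$, car~$2$ (of length $c=a+b$) is pushed to $[2a+1,3a+b]$, and car~$3$ (of length $b$) is then left only with free blocks of sizes $a$ and $b-a$, both strictly less than $b$; for $(1,1+a,1+a+b)$ the rearrangement $(1+a,1+a+b,1)$ fails identically, with car~$2$ now occupying $[1+a+b,2a+2b]$ and the same two residual blocks of sizes $a$ and $b-a$. This contradiction forces $x_2=1$.

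For the second part, write $\x=(1,1,w)$ with $w>1$. Substituting $(y_1,y_2,y_3)=(a,c,b)$ with $c=a+b$ into the conditions from the proof of Theorem~\ref{theorem: mi triple}, membership $(1,1,w)\in\PAINV_3(\y)$ becomes: (1)~$w\le 1+a+c=1+2a+b$; and (2)~$w\le 1+a$ or $w=1+a+b$; and (3)~$w=1+c=1+a+b$, or $w=1+c+b=1+a+2b$, or ($c\ge b$, which holds, and $w=1+b$). I would then tabulate the six ways of picking one clause from (2) and one from (3), exactly as in the preceding propositions: every choice of the clause $w\le 1+a$ is incompatible with all three clauses of (3) since $a<b$; among the remaining choices, pairing $w=1+a+b$ with $w=1+c$ is consistent and yields $w=1+a+b$ (which also satisfies (1)), while pairing it with $w=1+a+2b$ or $w=1+b$ would force $b=0$ or $a=0$. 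Hence $w=1+a+b$ is the unique value exceeding $1$, giving $\PAINVND_3(\y)=\{(1,1,1),(1,1,1+a+b)\}$.

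The main obstacle is the first part: it depends on choosing the indices in Theorem~\ref{theorem: unique 1} so that the peeled-off car is the one of minimal length $a$, and then on the hand-verification that neither candidate $(1,1+a,1+a)$ nor $(1,1+a,1+a+b)$ survives a rearrangement — where the hypothesis $a<b$ is exactly what makes the two residual gaps (of sizes $a$ and $b-a$) too short to hold a car of length $b$. The computation of $w$ in the second part is routine once the Theorem~\ref{theorem: mi triple} conditions are specialized.
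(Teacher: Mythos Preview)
Your proof is correct, but it takes a different route from the paper in the first part. The paper observes that $\y_{\vert_2}=(a,c)$ is minimally invariant (since $a<c$, by Corollary~\ref{corollary: mi pair}) and then invokes Corollary~\ref{corollary: append} directly: every nondecreasing $\x\in\PAINV_3(\y)$ must be of the form $(1,1,w)$. This dispatches the first part in one line, with no need for Theorem~\ref{theorem: unique 1} or for hand-checking the rearrangements $(1+a,1+a,1)$ and $(1+a,1+a+b,1)$.

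Your approach instead follows the template of propositions like~\ref{prop: y=(b,a,b)} and~\ref{prop: y=(c,a,b), a+b<=c}, where $\y_{\vert_2}$ is \emph{not} minimally invariant and Corollary~\ref{corollary: append} is unavailable. That template still works here and your parking experiments are correct, but it is more laborious than necessary: the hypothesis $a<c$ is precisely what makes the shortcut through Corollary~\ref{corollary: append} available, and the paper exploits it. The second part (reading off $w$ from the Theorem~\ref{theorem: mi triple} conditions) is identical in both proofs.
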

\begin{proof}
By Theorem~\ref{theorem: mi triple}, $\y$ is not minimally invariant. 
Therefore, by Theorem \ref{theorem: mi}, there exists $w \in \N_{> 1}$ such that $(1,1,w) \in \PAINV_3(\y)$.
Note moreover that $\y_{\vert_2} = (a, c)$ is minimally invariant by Theorem~\ref{corollary: mi pair}.
Therefore, by Corollary~\ref{corollary: append}, all $\x \in \PAINV_n(\y)$ are in fact of the form $\x = (1, 1, w)$ for some $w \in \N_{> 1}$.

By the proof of Theorem~\ref{theorem: mi triple}, upon substituting $\x = (1, 1, w)$ and $\y = (a, c, b) = (a, a + b, b)$, we have that $\x \in \PAINV_3(\y)$ for $w \in \N_{> 1}$ if and only if
\begin{enumerate}[label=(\arabic*)]
    \item $w\leq 1+2a+b$ holds, and
    \item at least one of
    \begin{enumerate}[label=(2\alph*)]
        \item $w\leq 1+a$, or
        \item $w=1+a+b$ holds, and
    \end{enumerate}
    \item at least one of
    \begin{enumerate}[label=(3\alph*)]
        \item $w=1+a+b$, or
        \item $w=1+a+2b$, or
        \item $a+b\geq b$ and $w=1+b$ holds.
    \end{enumerate}
\end{enumerate}
To find the valid solutions for $w$, consider the following table, where ``None'' indicates that selecting the conditions marked ``Yes'' leads to no solutions for $w$. 
Namely, in the table, each row corresponds to a combination of ways in which conditions (1), (2), and (3) may hold, and the solutions for $w$ that arise (if any).
Note that empty cells represent that the listed condition is immaterial to that row.
\begin{align*}
    \begin{array}{|c|c|c|c|c|c|c|}
        \hline
        \text{Cond. (1)} & \text{Cond. (2a)} & \text{Cond. (2b)} & \text{Cond. (3a)} & \text{Cond. (3b)} &\text{Cond. (3c)}& w \\
       \hline
        \text{Yes} & \text{Yes} & & \text{Yes} & & & \text{None} \\
        \hline
        \text{Yes} & \text{Yes} & & & \text{Yes} & & \text{None} \\
        \hline
        \text{Yes} & \text{Yes} & & & & \text{Yes} & \text{None} \\
        \hline
        \text{Yes} & & \text{Yes} & \text{Yes} & & & 1+a+b \\
        \hline
        \text{Yes} & & \text{Yes} & & \text{Yes} & & \text{None} \\
        \hline
        \text{Yes} & & \text{Yes} & & & \text{Yes} & \text{None} \\
        \hline 
    \end{array}
\end{align*}
Therefore, $w = 1 + a + b$ is the only valid solution satisfying $w > 1$.
This shows that $\PAINVND_3(\y) = \{(1,1,1), (1,1,1+a+b)\}$.
\end{proof}

\begin{proposition}
\label{prop: y=(a,c,b), a+b!=c}
Let $\y=(a,c,b) \in \N^3$ with $a+b\neq c$. 
Then, $\PAINVND_3(\y)=\{(1,1,1)\}$.
\end{proposition}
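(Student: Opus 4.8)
The plan is to obtain the result as an immediate consequence of the three-car characterization, Theorem~\ref{theorem: mi triple}. Writing $\y = (a,c,b)$ in the generic form $(y_1,y_2,y_3)$, we have $y_1 = a$, $y_2 = c$, and $y_3 = b$. Theorem~\ref{theorem: mi triple} asserts that $\y$ is minimally invariant if and only if $y_1 < y_2$, $y_1 < y_3$, and $y_1 + y_3 \neq y_2$. In the present setting these three conditions read $a < c$, $a < b$, and $a + b \neq c$, respectively.

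First I would check each condition in turn. The inequality $a < c$ holds since $a < b < c$; the inequality $a < b$ is part of the standing hypothesis $a < b < c$; and $a + b \neq c$ is exactly the hypothesis of the proposition. Hence all three conditions of Theorem~\ref{theorem: mi triple} are met, so $\y$ is minimally invariant, that is, $\PAINV_3(\y) = \{(1^3)\}$. Intersecting with the set of nondecreasing preference lists then yields $\PAINVND_3(\y) = \{(1,1,1)\}$, as claimed.

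There is no genuine obstacle here: the only point requiring care is the identification of which entry of $\y$ plays the role of $y_1$ in Theorem~\ref{theorem: mi triple}. Since the smallest car length $a$ is the first entry of $\y = (a,c,b)$, both ``smallness'' hypotheses $y_1 < y_2$ and $y_1 < y_3$ hold automatically, and the sole arithmetic constraint $y_1 + y_3 \neq y_2$ is supplied directly by the assumption $a + b \neq c$. This parallels the one-line proofs of Proposition~\ref{prop: y=(a,b,b)} and Proposition~\ref{prop: y=(a,b,c)}.
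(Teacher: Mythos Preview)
Your proof is correct and follows exactly the same approach as the paper: the paper's proof is the single line ``The result follows from Theorem~\ref{theorem: mi triple},'' and you have simply spelled out the verification that the three hypotheses $y_1<y_2$, $y_1<y_3$, and $y_1+y_3\neq y_2$ hold for $\y=(a,c,b)$ under the assumption $a+b\neq c$.
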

\begin{proof}
The result follows from Theorem \ref{theorem: mi triple}.
\end{proof}

\begin{proposition}
\label{prop: y=(b,a,c)}
Let $\y=(b,a,c) \in \N^3$. 
Then, $\PAINVND_3(\y)=\{(1,1,1), (1,1,1+a)\}$.
\end{proposition}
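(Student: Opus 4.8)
The plan is to mirror the proofs of Proposition~\ref{prop: y=(a,a,b)} and Proposition~\ref{prop: y=(b,a,b)}. Let $\x=(x_1,x_2,x_3)\in\PAINVND_3(\y)$ with $\x\neq(1,1,1)$; by Proposition~\ref{proposition: nondecreasing} we have $x_1=1$, so the first stage is to show $x_2=1$, leaving only the shape $\x=(1,1,w)$ for the second stage, which is handled by the conditions extracted in the proof of Theorem~\ref{theorem: mi triple}.

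To force $x_2=1$, I would argue by contradiction, assuming $x_2>1$, and split into the two mutually exclusive cases determined by $a=\min_i y_i$. If $1<x_2\leq a$, then $a>1$ and Lemma~\ref{lemma: minentry} gives $\x\notin\PAINV_3(\y)$, a contradiction. If instead $x_2>a$, then $1=x_1<x_2\leq x_3$, so $\x$ has a unique entry equal to $1$; I would pass to the rearrangement $(x_2,1,x_3)\in\PAINV_3(\y)$ and apply Theorem~\ref{theorem: unique 1} with $j=2$ (removing the entry $1$ and subtracting $y_2=a$) and $i=1$ (so that $\y_{\widehat{1}}=(a,c)$), obtaining $(x_2-a,\,x_3-a)\in\PAINV_2((a,c))$. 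Since $a<c$, Corollary~\ref{corollary: mi pair} says $(a,c)$ is minimally invariant, so $(x_2-a,x_3-a)=(1,1)$, and together with $a<x_2\leq x_3$ this forces $x_2=x_3=1+a$. It then remains to rule out $\x=(1,1+a,1+a)$, which I would do by exhibiting the failing rearrangement $\x'=(1+a,1+a,1)$: car $1$ (length $b$) parks in spots $1+a,\ldots,a+b$, car $2$ (length $a$) skips ahead to spots $a+b+1,\ldots,2a+b$, and then car $3$ (length $c$, preference $1$) finds only the $a$ free spots $1,\ldots,a$ at the front (too few, since $c>a$) and only the $c-a$ free spots at the tail, so it fails; hence $\x'\notin\PA_3(\y)$ and $\x\notin\PAINV_3(\y)$, a contradiction. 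Thus $x_1=x_2=1$.

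For the second stage I would substitute $\x=(1,1,w)$ and $\y=(b,a,c)$ into the case analysis in the proof of Theorem~\ref{theorem: mi triple}: $(1,1,w)\in\PAINV_3(\y)$ with $w>1$ if and only if (1) $w\leq 1+a+b$; and (2) $w\leq 1+b$ or $w=1+b+c$; and (3) $w=1+a$ or $w=1+a+c$ (the option ``$y_2\geq y_3$ and $w=1+y_3$'' is vacuous here since $a<c$). A short table over the ways (1)--(3) can hold simultaneously then leaves $w=1+a$ as the unique solution with $w>1$: the choice $w=1+a+c$ violates (1) because $c>b$, the choice $w=1+b+c$ violates (1) because $b+c>a+b$, while $w=1+a$ satisfies (1) and (2) (via $w\leq 1+b$, as $a<b$). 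This yields $\PAINVND_3(\y)=\{(1,1,1),(1,1,1+a)\}$.

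I expect the only delicate point to be the middle case: getting the instance of Theorem~\ref{theorem: unique 1} exactly right (choosing the rearrangement $(x_2,1,x_3)$ so that the subtracted length is $y_2=a$ and the surviving two-car list is the minimally invariant $(a,c)$), and carrying out the explicit parking check that the rearrangement $(1+a,1+a,1)$ fails. Everything else is a mechanical instantiation of the Theorem~\ref{theorem: mi triple} conditions, exactly as in Proposition~\ref{prop: y=(b,a,b)}.
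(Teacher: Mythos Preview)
Your proposal is correct and follows essentially the same approach as the paper. The only cosmetic differences are that you invoke Theorem~\ref{theorem: unique 1} on the rearrangement $(x_2,1,x_3)$ with $\y_{\widehat{1}}=(a,c)$, whereas the paper lands in $\PAINV_2((b,c))$ via $\y_{\widehat{2}}$; since both $(a,c)$ and $(b,c)$ are minimally invariant, the conclusion $x_2=x_3=1+a$ is the same, and your explicit verification that $(1+a,1+a,1)\notin\PA_3(\y)$ matches the paper's assertion.
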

\begin{proof}
Let $\x = (x_1, x_2, x_3) \in \PAINV_3(\y)$ be nondecreasing.
Clearly $x_1 = 1$.
We now show that $x_2=1$ by contradiction assuming that $x_2>1$ and considering the following two mutually exclusive possibilities:
\begin{itemize}
    \item[] 
    Case 1: Suppose $x_2 > a$.
    Then, Theorem~\ref{theorem: unique 1} implies $(x_2 - a, x_3 - a) \in \PAINV_2((b,c))$.
    Since $b < c$, Corollary~\ref{corollary: mi pair} implies $(b,c)$ is minimally invariant, and so $x_2 = x_3 = 1 + a$.
    However, $\x = (1, 1 + a, 1 + a) \notin \PAINV_3(\y)$ since its rearrangement $\x' = (1 + a, 1 + a, 1) \notin \PA_3(\y)$.
    \item[] 
    Case 2: Suppose $x_2 \leq a$.
    If $a = 1$ we have $x_2 = 1$, contradicting the assumption that $x_2 > 1$.
    If $a > 1$, Lemma~\ref{lemma: minentry} implies $\x \notin \PAINV_3(\y)$.
\end{itemize}
Therefore, if $\x \in \PAINV_3(\y)$ is nondecreasing, then $x_1 = x_2 = 1$.
By the proof of Theorem~\ref{theorem: mi triple}, upon substituting $\x = (1, 1, w)$ and $\y = (b, a, c)$, we have that $\x \in \PAINV_3(\y)$ for $w \in \N_{> 1}$ if and only if
\begin{enumerate}[label=(\arabic*)]
    \item $w\leq 1+a+b$ holds, and
    \item at least one of
    \begin{enumerate}[label=(2\alph*)]
        \item $w\leq 1+b$, or
        \item $w=1+b+c$ holds, and
    \end{enumerate}
    \item at least one of
    \begin{enumerate}[label=(3\alph*)]
        \item $w=1+a$, or
        \item $w=1+a+c$ holds.
    \end{enumerate}
\end{enumerate}
(We omit \ref{it: mi triple (3c)} since it requires $a \geq c$, a contradiction.)
To find the valid solutions for $w$, consider the following table, where ``None'' indicates that selecting the conditions marked ``Yes'' leads to no solutions for $w$. 
Namely, in the table, each row corresponds to a combination of ways in which conditions (1), (2), and (3) may hold, and the solutions for $w$ that arise (if any).
Note that empty cells represent that the listed condition is immaterial to that row.
\begin{align*}
    \begin{array}{|c|c|c|c|c|c|}
        \hline
       \text{Cond. (1)} & \text{Cond. (2a)} & \text{Cond. (2b)} & \text{Cond. (3a)} & \text{Cond. (3b)} & w \\
        \hline
        \text{Yes} & \text{Yes} & & \text{Yes} & & 1+a \\
        \hline
        \text{Yes} & \text{Yes} & & & \text{Yes} & \text{None} \\
        \hline
        \text{Yes} & & \text{Yes} & \text{Yes} & & \text{None} \\
        \hline
        \text{Yes} & & \text{Yes} & & \text{Yes} & \text{None} \\
        \hline 
    \end{array}
\end{align*}
Therefore, $w = 1 + a$ is the only valid solution satisfying $w > 1$.
This shows that $\PAINVND_3(\y) = \{(1,1,1), (1,1,1+a)\}$.
\end{proof}

\begin{proposition}
\label{prop: y=(b,c,a), a+b=c}
Let $\y=(b,c,a) \in \N^3$ with $a+b=c$. 
Then, $\PAINVND_3(\y)=\{(1,1,1),(1,1,1+a),(1,1,1+a+b)\}$.
\end{proposition}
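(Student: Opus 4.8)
The plan is to mirror the template used in the preceding appendix propositions whose penultimate car length is the largest (e.g.\ Proposition~\ref{prop: y=(a,c,b), a+b=c} and Proposition~\ref{prop: y=(b,b,a)}). Write $\y=(y_1,y_2,y_3)$ with $y_1=b$, $y_2=c=a+b$, $y_3=a$. First I would note that $\y_{\vert_2}=(b,c)$ has $b<c$, so by Corollary~\ref{corollary: mi pair} it is minimally invariant; then Corollary~\ref{corollary: append} forces every nondecreasing $\x\in\PAINV_3(\y)$ to have the form $\x=(1,1,w)$ for some $w\in\N$. Moreover $\y$ itself is \emph{not} minimally invariant (in the notation of Theorem~\ref{theorem: mi triple} the requirement $y_1<y_3$ fails, since $b>a$), so by Theorem~\ref{theorem: mi} at least one $w\in\N_{>1}$ yields $(1,1,w)\in\PAINV_3(\y)$; it remains only to identify exactly which $w$ work.

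Next I would invoke the characterization extracted in the proof of Theorem~\ref{theorem: mi triple}: for $w\in\N_{>1}$, $(1,1,w)\in\PAINV_3(\y)$ if and only if condition~\ref{it: mi triple (1)} holds, at least one of \ref{it: mi triple (2a)}/\ref{it: mi triple (2b)} holds, and at least one of \ref{it: mi triple (3a)}/\ref{it: mi triple (3b)}/\ref{it: mi triple (3c)} holds. Substituting $y_1=b$, $y_2=a+b$, $y_3=a$ specializes these to: (1) $w\leq 1+a+2b$; (2a) $w\leq 1+b$, or (2b) $w=1+a+b$; (3a) $w=1+a+b$, or (3b) $w=1+2a+b$, or (3c) $w=1+a$ (its side condition $y_2\geq y_3$, i.e.\ $a+b\geq a$, is automatic, so (3c) is genuinely available and the case table has six rows rather than four).

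Then I would run the six-row table exactly as in Proposition~\ref{prop: y=(b,b,a)}. The rows pairing (2a) with (3a) or with (3b) are empty because $a\geq 1$ forces $1+a+b$ and $1+2a+b$ to exceed $1+b$; the row pairing (2b) with (3b) is empty since $1+a+b\neq 1+2a+b$; the row pairing (2b) with (3c) is empty since $1+a+b\neq 1+a$; the row (1),(2a),(3c) gives $w=1+a$ (compatible with $w\leq 1+b$ because $a<b$, and with (1)); and the row (1),(2b),(3a) gives $w=1+a+b$ (compatible with (1) since $b\geq 1$). Hence the only solutions with $w>1$ are $w=1+a$ and $w=1+a+b$, so $\PAINVND_3(\y)=\{(1,1,1),(1,1,1+a),(1,1,1+a+b)\}$.

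The only real care needed — and thus the main obstacle — is bookkeeping: correctly specializing conditions~\ref{it: mi triple (1)}--\ref{it: mi triple (3)} to this $\y$, remembering that $y_2=a+b$ is the concatenation sum rather than a fresh symbol (so several of the ``$w=\cdots$'' clauses collapse onto the single value $1+a+b$), and checking the feasibility of each candidate against condition~\ref{it: mi triple (1)}. There is no conceptual difficulty beyond what already appears in the analogous propositions.
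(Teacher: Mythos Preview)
Your proposal is correct and follows essentially the same approach as the paper's proof: you use Corollary~\ref{corollary: mi pair} and Corollary~\ref{corollary: append} to reduce to preference lists of the form $(1,1,w)$, then specialize the conditions from the proof of Theorem~\ref{theorem: mi triple} and run the six-row table to obtain $w\in\{1+a,\,1+a+b\}$. The only cosmetic difference is the order in which you invoke the minimal-invariance facts for $\y_{\vert_2}$ and $\y$.
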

\begin{proof}
By Theorem~\ref{theorem: mi triple}, $\y$ is not minimally invariant. 
Therefore, by Theorem \ref{theorem: mi}, there exists $w \in \N_{> 1}$ such that $(1,1,w) \in \PAINV_3(\y)$.
Note moreover that $\y_{\vert_2} = (b, c)$ is minimally invariant by Theorem~\ref{corollary: mi pair}.
Therefore, by Corollary~\ref{corollary: append}, all $\x \in \PAINV_n(\y)$ are in fact of the form $\x = (1, 1, w)$ for some $w \in \N_{> 1}$.

By the proof of Theorem~\ref{theorem: mi triple}, upon substituting $\x = (1, 1, w)$ and $\y = (b, c, a) = (b, a + b, a)$, we have that $\x \in \PAINV_3(\y)$ for $w \in \N_{> 1}$ if and only if
\begin{enumerate}[label=(\arabic*)]
    \item $w\leq 1+a+2b$ holds, and
    \item at least one of
    \begin{enumerate}[label=(2\alph*)]
        \item $w\leq 1+b$, or
        \item $w=1+a+b$ holds, and
    \end{enumerate}
    \item at least one of
    \begin{enumerate}[label=(3\alph*)]
        \item $w=1+a+b$, or
        \item $w=1+2a+b$, or
        \item $a+b\geq a$ and $w=1+a$ holds.
    \end{enumerate}
\end{enumerate}
To find the valid solutions for $w$, consider the following table, where ``None'' indicates that selecting the conditions marked ``Yes'' leads to no solutions for $w$. 
Namely, in the table, each row corresponds to a combination of ways in which conditions (1), (2), and (3) may hold, and the solutions for $w$ that arise (if any).
Note that empty cells represent that the listed condition is immaterial to that row.
\begin{align*}
    \begin{array}{|c|c|c|c|c|c|c|}
        \hline
        \text{Cond. (1)} & \text{Cond. (2a)} & \text{Cond. (2b)} & \text{Cond. (3a)} & \text{Cond. (3b)}&\text{Cond. (3c)} & w \\
        \hline
        \text{Yes} & \text{Yes} & & \text{Yes} & & & \text{None} \\
        \hline
        \text{Yes} & \text{Yes} & & & \text{Yes} & & \text{None} \\
        \hline
        \text{Yes} & \text{Yes} & & & & \text{Yes} & 1+a \\
        \hline
        \text{Yes} & & \text{Yes} & \text{Yes} & & & 1+a+b \\
        \hline
        \text{Yes} & & \text{Yes} & & \text{Yes} & & \text{None} \\
        \hline
        \text{Yes} & & \text{Yes} & & & \text{Yes} & \text{None} \\
        \hline 
    \end{array}
\end{align*}
Therefore, $w = 1 + a$ and $w = 1 + a + b$ are the only valid solutions satisfying $w > 1$.
This shows that $\PAINVND_3(\y)=\{(1,1,1), (1,1,1+a),(1,1,1+a+b)\}$.
\end{proof}

\begin{proposition}
\label{prop: y=(b,c,a), a+b!=c}
Let $\y=(b,c,a) \in \N^3$ with $a+b\neq c$. 
Then, $\PAINVND_3(\y)=\{(1,1,1),(1,1,1+a)\}$.
\end{proposition}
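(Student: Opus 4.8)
The plan is to follow the template of Proposition~\ref{prop: y=(b,c,a), a+b=c} essentially verbatim, changing only the arithmetic that the hypothesis $a+b\neq c$ affects. First I would note that $\y=(b,c,a)$, read as $(y_1,y_2,y_3)$, has $y_1=b$ and $y_3=a$ with $a<b$, so the condition $y_1<y_3$ of Theorem~\ref{theorem: mi triple} fails; hence $\y$ is \emph{not} minimally invariant, and by Theorem~\ref{theorem: mi} there is some $w\in\N_{>1}$ with $(1,1,w)\in\PAINV_3(\y)$. Second, since $\y_{\vert_2}=(b,c)$ has $b<c$, Corollary~\ref{corollary: mi pair} shows $\y_{\vert_2}$ is minimally invariant, so Corollary~\ref{corollary: append} forces every nondecreasing element of $\PAINV_3(\y)$ to have the form $(1,1,w)$. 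The remaining task is to determine exactly which $w>1$ occur.

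For that I would specialize the three parking criteria from the proof of Theorem~\ref{theorem: mi triple} to $\x=(1,1,w)$ and $\y=(b,c,a)$: one gets $(1,1,w)\in\PA_3(\y)$ iff $w\leq 1+b+c$; $(1,w,1)\in\PA_3(\y)$ iff $w\leq 1+b$ or $w=1+a+b$; and $(w,1,1)\in\PA_3(\y)$ iff $w=1+c$, or $w=1+a+c$, or ($c\geq a$ and $w=1+a$), the last clause being live since $a<c$. Because $(1,1,w)\in\PAINV_3(\y)$ precisely when all three hold, I would then tabulate the (at most) six ways of choosing one disjunct from the second criterion and one from the third, exactly as in the adjacent propositions. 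The pairings of $w=1+c$ or of $w=1+a+c$ with $w\leq 1+b$ are impossible since $c>b$; the pairing of $w=1+a+b$ with $w=1+a+c$ would need $b=c$, and with $w=1+a$ would need $b=0$, both impossible; and the pairing of $w=1+a+b$ with $w=1+c$ would need $a+b=c$, ruled out by hypothesis. The only surviving possibility is $w\leq 1+b$ together with $w=1+a$, which is consistent because $a<b$ and also satisfies $w\leq 1+b+c$; moreover $w=1+a>1$. Hence $w=1+a$ is the unique solution with $w>1$, giving $\PAINVND_3(\y)=\{(1,1,1),(1,1,1+a)\}$.

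The proof is largely mechanical, so I do not anticipate a real obstacle; the one point requiring attention is recognizing where the hypothesis is actually used. The sole difference from the $a+b=c$ case of Proposition~\ref{prop: y=(b,c,a), a+b=c} is that the row pairing $w=1+a+b$ with $w=1+c$ — which yielded the extra solution $1+a+b$ there — is now excluded precisely by $a+b\neq c$. The other mild care-point is to invoke the minimal invariance of $\y_{\vert_2}=(b,c)$ before appealing to Corollary~\ref{corollary: append}, so that no rearrangement with a non-$1$ entry among the first two coordinates needs separate treatment; with that in place, the table finishes the argument.
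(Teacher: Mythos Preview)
Your proposal is correct and follows essentially the same approach as the paper's own proof: first using Corollary~\ref{corollary: mi pair} on $\y_{\vert_2}=(b,c)$ together with Corollary~\ref{corollary: append} to reduce to $\x=(1,1,w)$, and then specializing the three criteria from the proof of Theorem~\ref{theorem: mi triple} and eliminating all combinations except $w=1+a$. The only cosmetic difference is that the paper records the six case pairings in a table rather than arguing them verbally, and you correctly identify that the hypothesis $a+b\neq c$ is used precisely to kill the pairing of (2b) with (3a).
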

\begin{proof}
By Theorem~\ref{theorem: mi triple}, $\y$ is not minimally invariant. 
Therefore, by Theorem \ref{theorem: mi}, there exists $w \in \N_{> 1}$ such that $(1,1,w) \in \PAINV_3(\y)$.
Note moreover that $\y_{\vert_2} = (b, c)$ is minimally invariant by Theorem~\ref{corollary: mi pair}.
Therefore, by Corollary~\ref{corollary: append}, all $\x \in \PAINV_n(\y)$ are in fact of the form $\x = (1, 1, w)$ for some $w \in \N_{> 1}$.

By the proof of Theorem~\ref{theorem: mi triple}, upon substituting $\x = (1, 1, w)$ and $\y = (b, c, a) \neq (b, a + b, a)$, we have that $\x \in \PAINV_3(\y)$ for $w \in \N_{> 1}$ if and only if
\begin{enumerate}[label=(\arabic*)]
    \item $w\leq 1+b+c$ holds, and
    \item at least one of
    \begin{enumerate}[label=(2\alph*)]
        \item $w\leq 1+b$, or
        \item $w=1+a+b$ holds, and
    \end{enumerate}
    \item at least one of
    \begin{enumerate}[label=(3\alph*)]
        \item $w=1+c$, or
        \item $w=1+a+c$, or
        \item $c\geq a$ and $w=1+a$ holds.
    \end{enumerate}
\end{enumerate}
To find the valid solutions for $w$, consider the following table, where ``None'' indicates that selecting the conditions marked ``Yes'' leads to no solutions for $w$. 
Namely, in the table, each row corresponds to a combination of ways in which conditions (1), (2), and (3) may hold, and the solutions for $w$ that arise (if any).
Note that empty cells represent that the listed condition is immaterial to that row.
\begin{align*}
    \begin{array}{|c|c|c|c|c|c|c|}
        \hline 
        \text{Cond. (1)} & \text{Cond. (2a)} & \text{Cond. (2b)} & \text{Cond. (3a)} & \text{Cond. (3b)} &\text{Cond. (3c)} & w \\
       \hline
        \text{Yes} & \text{Yes} & & \text{Yes} & & & \text{None} \\
        \hline
        \text{Yes} & \text{Yes} & & & \text{Yes} & & \text{None} \\
        \hline
        \text{Yes} & \text{Yes} & & & & \text{Yes} & 1+a \\
        \hline
        \text{Yes} & & \text{Yes} & \text{Yes} & & & \text{None} \\
        \hline
        \text{Yes} & & \text{Yes} & & \text{Yes} & & \text{None} \\
        \hline
        \text{Yes} & & \text{Yes} & & & \text{Yes} & \text{None} \\
        \hline 
    \end{array}
\end{align*}
Therefore, $w = 1 + a$ is the only valid solution satisfying $w > 1$.
This shows that $\PAINVND_3(\y)=\{(1,1,1), (1,1,1+a)\}$.
\end{proof}

\begin{proposition}
\label{prop: y=(c,a,b), a+b<=c}
Let $\y=(c,a,b) \in \N^3$ with $a+b\leq c$. 
Then, $\PAINVND_3(\y)=\{(1,1,1),(1,1,1+a),(1,1,1+a+b)\}$.
\end{proposition}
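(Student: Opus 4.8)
The plan is to follow the two-stage template used for the sibling propositions in this appendix: first show that any nondecreasing $\x\in\PAINVND_3(\y)$ must begin with two $1$'s, and then read off the admissible last coordinate $w$ from the computation underlying Theorem~\ref{theorem: mi triple}.

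For the reduction, I would fix a nondecreasing $\x=(x_1,x_2,x_3)\in\PAINV_3(\y)$ with $\x\neq(1^3)$, so that $x_1=1$, and argue $x_2=1$ by contradiction. If $1<x_2\le a$, then since $a=\min\{a,b,c\}$ (and necessarily $a>1$ here, else this range is empty) Lemma~\ref{lemma: minentry} gives $\x\notin\PAINV_3(\y)$. If instead $x_2>a$, then $x_3\ge x_2>a$, and applying Theorem~\ref{theorem: unique 1} to the rearrangement $(x_2,1,x_3)\in\PAINV_3(\y)$ — which still has a unique entry equal to $1$ — I get $(x_2-a,x_3-a)\in\PAINV_2((a,b))$; since $a<b$, Corollary~\ref{corollary: mi pair} shows $(a,b)$ is minimally invariant, forcing $(x_2-a,x_3-a)=(1,1)$, i.e.\ $\x=(1,1+a,1+a)$. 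To eliminate this last possibility I would run the parking experiment on the rearrangement $(1+a,1+a,1)$: car $1$ (length $c$) occupies spots $1+a,\dots,a+c$; car $2$ (length $a$) is forced past that block into $a+c+1,\dots,2a+c$, which fits because $a<b$ gives $2a+c\le a+b+c=m$; and then car $3$ (length $b$, preference $1$) sees only the free runs $\{1,\dots,a\}$ and $\{2a+c+1,\dots,m\}$, of sizes $a<b$ and $b-a<b$, so it cannot park and $\x\notin\PA_3(\y)$, a contradiction. Hence $x_2=1$.

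With $\x=(1,1,w)$ and $\y=(c,a,b)$, I would then specialize claims \ref{it: mi triple (1)}--\ref{it: mi triple (3)} from the proof of Theorem~\ref{theorem: mi triple}: $(1,1,w)\in\PA_3(\y)$ iff $w\le 1+a+c$; $(1,w,1)\in\PA_3(\y)$ iff $w\le 1+c$ or $w=1+b+c$; and $(w,1,1)\in\PA_3(\y)$ iff $w=1+a$ or $w=1+a+b$ (condition \ref{it: mi triple (3c)} is vacuous since $y_2=a<b=y_3$). A short case table over these three conditions leaves exactly $w=1+a$ (valid because $a<c$) and $w=1+a+b$ (valid because the hypothesis $a+b\le c$ clears condition \ref{it: mi triple (2a)}), giving $\PAINVND_3(\y)=\{(1,1,1),(1,1,1+a),(1,1,1+a+b)\}$.

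I expect the main obstacle to be the reduction step — specifically, the bookkeeping in the parking experiment on $(1+a,1+a,1)$ and the verification that Theorem~\ref{theorem: unique 1} applies on the rearranged list — while the final enumeration is routine once the conditions of Theorem~\ref{theorem: mi triple} are specialized; note in particular that $a+b\le c$ is used only in this last step, consistently with the companion case $a+b>c$ yielding the smaller set $\{(1,1,1),(1,1,1+a)\}$.
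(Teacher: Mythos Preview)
Your proof is correct and follows essentially the same two-stage template as the paper's. The only difference is a slight streamlining: in Case~1 you invoke Theorem~\ref{theorem: unique 1} with $i=1$, landing in $\PAINV_2((a,b))$, which is minimally invariant and immediately forces $x_2=x_3=1+a$, whereas the paper uses $i=2$ to land in $\PAINV_2((c,b))$ and must then separately eliminate both $(1,1+a,1+a)$ and $(1,1+a,1+a+b)$.
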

\begin{proof}
Let $\x = (x_1, x_2, x_3) \in \PAINV_3(\y)$ be nondecreasing.
Clearly $x_1 = 1$.
We now show that $x_2=1$ by contradiction assuming that $x_2>1$ and considering the following two mutually exclusive possibilities:
\begin{itemize}
    \item[] 
    Case 1: Suppose $x_2 > a$.
    Then, Theorem~\ref{theorem: unique 1} implies $(x_2 - a, x_3 - a) \in \PAINV_2((c,b))$.
    By Theorem~\ref{theorem: invariant pair}, this implies $x_2 = 1 + a$ and either $x_3 = 1 + a$ or $x_3 = 1 + a + b$.
    However, $\x = (1, 1 + a, 1 + a) \notin \PAINV_3(\y)$ since its rearrangement $\x' = (1 + a, 1 + a, 1) \notin \PA_3(\y)$.
    Similarly, $\x = (1, 1 + a, 1 + a + b) \notin \PAINV_3(\y)$  since its rearrangement $\x' = (1 + a, 1 + a + b, 1) \notin \PA_3(\y)$.
    \item[] 
    Case 2: Suppose $x_2 \leq a$.
    If $a = 1$ we have $x_2 = 1$, contradicting the assumption that $x_2 > 1$.
    If $a > 1$, Lemma~\ref{lemma: minentry} implies $\x \notin \PAINV_3(\y)$.
\end{itemize}
Therefore, if $\x \in \PAINV_3(\y)$ is nondecreasing, then $x_1 = x_2 = 1$.
By the proof of Theorem~\ref{theorem: mi triple}, upon substituting $\x = (1, 1, w)$ and $\y = (c, a, b)$ with $a + b\leq c$, we have that $\x \in \PAINV_3(\y)$ for $w \in \N_{> 1}$ if and only if
\begin{enumerate}[label=(\arabic*)]
    \item $w\leq 1+a+c$ holds, and
    \item at least one of
    \begin{enumerate}[label=(2\alph*)]
        \item $w\leq 1+c$, or
        \item $w=1+b+c$ holds, and
    \end{enumerate}
    \item at least one of
    \begin{enumerate}[label=(3\alph*)]
        \item $w=1+a$, or
        \item $w=1+a+b$ holds.
    \end{enumerate}
\end{enumerate}
(We omit \ref{it: mi triple (3c)} since it requires $a \geq b$, a contradiction.)
To find the valid solutions for $w$, consider the following table, where ``None'' indicates that selecting the conditions marked ``Yes'' leads to no solutions for $w$. 
Namely, in the table, each row corresponds to a combination of ways in which conditions (1), (2), and (3) may hold, and the solutions for $w$ that arise (if any).
Note that empty cells represent that the listed condition is immaterial to that row.
\begin{align*}
    \begin{array}{|c|c|c|c|c|c|}
        \hline
        \text{Cond. (1)} & \text{Cond. (2a)} & \text{Cond. (2b)} & \text{Cond. (3a)} & \text{Cond. (3b)}  & w \\\hline
        \text{Yes} & \text{Yes} & & \text{Yes} & & 1+a \\
        \hline
        \text{Yes} & \text{Yes} & & & \text{Yes} & 1+a+b \\
        \hline
        \text{Yes} & & \text{Yes} & \text{Yes} & & \text{None} \\
        \hline
        \text{Yes} & & \text{Yes} & & \text{Yes} & \text{None} \\
        \hline 
    \end{array}
\end{align*}
Therefore, $w = 1 + a$ and $w = 1 + a + b$ are the only valid solutions satisfying $w > 1$.
This shows that $\PAINVND_3(\y)=\{(1,1,1), (1,1,1+a), (1,1,1+a+b)\}$.
\end{proof}

\begin{proposition}
\label{prop: y=(c,a,b), a+b>c}
Let $\y=(c,a,b) \in \N^3$ with $a+b>c$. 
Then, $\PAINVND_3(\y)=\{(1,1,1),(1,1,1+a)\}$.
\end{proposition}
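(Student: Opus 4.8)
The plan is to mirror the argument of Proposition~\ref{prop: y=(c,a,b), a+b<=c}, observing that the hypothesis $a+b>c$ (as opposed to $a+b\le c$) only enters at one place in the final case analysis. Let $\x=(x_1,x_2,x_3)\in\PAINVND_3(\y)$ with $\x\neq(1,1,1)$. By Proposition~\ref{proposition: nondecreasing} we immediately get $x_1=1$, so the first task is to show $x_2=1$.

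I would argue $x_2=1$ by contradiction, assuming $x_2>1$ and splitting into two cases, exactly as in Proposition~\ref{prop: y=(c,a,b), a+b<=c}. If $x_2>a$, then applying Theorem~\ref{theorem: unique 1} to the rearrangement $(x_2,1,x_3)$ (whose unique entry equal to $1$ sits in position $2$) yields $(x_2-a,x_3-a)\in\PAINV_2(\y_{\widehat{2}})=\PAINV_2((c,b))$; since $c>b$, Theorem~\ref{theorem: invariant pair} forces $x_2=1+a$ and $x_3\in\{1+a,\,1+a+b\}$. Both possibilities are eliminated as before: the rearrangement $(1+a,1+a,1)$ is not in $\PA_3(\y)$ because car $3$ (of length $b$) ends up stranded between a left gap of size $a$ and a right gap of size $b-a$, both strictly smaller than $b$; and $(1+a,1+a+b,1)$ fails for the same reason, where one checks that $b<c$ guarantees spot $1+a+b$ is already occupied by car $1$ so that car $2$ is pushed past car $1$ and the same stranding occurs. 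If instead $1<x_2\le a$, then since $a=\min\{a,b,c\}$, Lemma~\ref{lemma: minentry} gives $\x\notin\PAINV_3(\y)$ (with the degenerate subcase $a=1$ immediate). Hence $x_1=x_2=1$, and $\x=(1,1,w)$ for some $w\in\N_{>1}$.

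Next I would invoke the case analysis from the proof of Theorem~\ref{theorem: mi triple}, specialized to $\x=(1,1,w)$ and $\y=(c,a,b)$: this shows $(1,1,w)\in\PAINV_3(\y)$ if and only if (1) $w\le 1+a+c$, and (2) $w\le 1+c$ or $w=1+b+c$, and (3) $w=1+a$ or $w=1+a+b$ (condition \ref{it: mi triple (3c)} is vacuous here since it would require $a\ge b$). Tabulating the compatible branches: any branch using $w=1+b+c$ violates (1) because $b>a$; the branch $(1)\wedge(w\le 1+c)\wedge(w=1+a)$ gives the valid solution $w=1+a$ (using $a<c$); and the branch $(1)\wedge(w\le 1+c)\wedge(w=1+a+b)$ now has \emph{no} solution precisely because $a+b>c$ makes $1+a+b>1+c$ --- this is the single place the hypothesis is used, and it is exactly the row that produced $w=1+a+b$ in Proposition~\ref{prop: y=(c,a,b), a+b<=c}. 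Therefore the only $w>1$ with $(1,1,w)\in\PAINV_3(\y)$ is $w=1+a$, giving $\PAINVND_3(\y)=\{(1,1,1),(1,1,1+a)\}$.

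The routine parts are the explicit checks that $(1+a,1+a,1)$ and $(1+a,1+a+b,1)$ are not parking assortments and the tabulation of the four relevant rows; the only content-bearing step is the collapse of the $w=1+a+b$ row under $a+b>c$, which is an immediate inequality. So I do not anticipate any genuine obstacle beyond carefully reusing the machinery already established for the $a+b\le c$ case.
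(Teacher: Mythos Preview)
Your proposal is correct and follows essentially the same approach as the paper's proof: first reduce to $x_1=x_2=1$ via Theorem~\ref{theorem: unique 1} (together with Theorem~\ref{theorem: invariant pair} and Lemma~\ref{lemma: minentry}), then run the case analysis from Theorem~\ref{theorem: mi triple} to isolate $w=1+a$. Your observation that the hypothesis $a+b>c$ enters only in collapsing the $w=1+a+b$ row is exactly how the paper's table differs from that in Proposition~\ref{prop: y=(c,a,b), a+b<=c}.
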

\begin{proof}
Let $\x = (x_1, x_2, x_3) \in \PAINV_3(\y)$ be nondecreasing.
Clearly $x_1 = 1$.
We now show that $x_2=1$ by contradiction assuming that $x_2>1$ and considering the following two mutually exclusive possibilities:
\begin{itemize}
    \item[] 
    Case 1: Suppose $x_2 > a$.
    Then, Theorem~\ref{theorem: unique 1} implies $(x_2 - a, x_3 - a) \in \PAINV_2((c,b))$.
    By Theorem~\ref{theorem: invariant pair}, this implies $x_2 = 1 + a$ and either $x_3 = 1 + a$ or $x_3 = 1 + a + b$.
    However, $\x = (1, 1 + a, 1 + a) \notin \PAINV_3(\y)$ since its rearrangement $\x' = (1 + a, 1 + a, 1) \notin \PA_3(\y)$.
    Similarly, $\x = (1, 1 + a, 1 + a + b) \notin \PAINV_3(\y)$  since its rearrangement $\x' = (1 + a, 1 + a + b, 1) \notin \PA_3(\y)$.
    \item[] 
    Case 2: Suppose $x_2 \leq a$.
    If $a = 1$ we have $x_2 = 1$, contradicting the assumption that $x_2 > 1$.
    If $a > 1$, Lemma~\ref{lemma: minentry} implies $\x \notin \PAINV_3(\y)$.
\end{itemize}
Therefore, if $\x \in \PAINV_3(\y)$ is nondecreasing, then $x_1 = x_2 = 1$.
By the proof of Theorem~\ref{theorem: mi triple}, upon substituting $\x = (1, 1, w)$ and $\y = (c, a, b)$ with $a + b > c$, we have that $\x \in \PAINV_3(\y)$ for $w \in \N_{> 1}$ if and only if
\begin{enumerate}[label=(\arabic*)]
    \item $w\leq 1+a+c$ holds, and
    \item at least one of
    \begin{enumerate}[label=(2\alph*)]
        \item $w\leq 1+c$, or
        \item $w=1+b+c$ holds, and
    \end{enumerate}
    \item at least one of
    \begin{enumerate}[label=(3\alph*)]
        \item $w=1+a$, or
        \item $w=1+a+b$ holds.
    \end{enumerate}
\end{enumerate}
(We omit \ref{it: mi triple (3c)} since it requires $a \geq b$, a contradiction.)
To find the valid solutions for $w$, consider the following table, where ``None'' indicates that selecting the conditions marked ``Yes'' leads to no solutions for $w$. 
Namely, in the table, each row corresponds to a combination of ways in which conditions (1), (2), and (3) may hold, and the solutions for $w$ that arise (if any).
Note that empty cells represent that the listed condition is immaterial to that row.
\begin{align*}
    \begin{array}{|c|c|c|c|c|c|}
        \hline
        \text{Cond. (1)} & \text{Cond. (2a)} & \text{Cond. (2b)} & \text{Cond. (3a)} & \text{Cond. (3b)}  & w \\\hline
        \text{Yes} & \text{Yes} & & \text{Yes} & & 1+a \\
        \hline
        \text{Yes} & \text{Yes} & & & \text{Yes} & \text{None} \\
        \hline
        \text{Yes} & & \text{Yes} & \text{Yes} & & \text{None} \\
        \hline
        \text{Yes} & & \text{Yes} & & \text{Yes} & \text{None} \\
        \hline 
    \end{array}
\end{align*}
Therefore, $w = 1 + a$ is the only valid solution satisfying $w > 1$.
This shows that $\PAINVND_3(\y)=\{(1,1,1), (1,1,1+a)\}$.
\end{proof}

\begin{proposition}
\label{prop: y=(c,b,a), a+b<=c}
Let $\y=(c,b,a)\in \N^3$ with $a+b\leq c$. 
Then, $\PAINVND_3(\y)=\{(1,1,1),(1,1,1+a),(1,1,1+b),(1,1,1+a+b)\}$.
\end{proposition}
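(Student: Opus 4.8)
The plan is to mirror the template used throughout this appendix: first cut $\PAINVND_3(\y)$ down to preference lists of the shape $(1,1,w)$, and then read off the admissible values of $w$ from the case analysis in the proof of Theorem~\ref{theorem: mi triple}. Fix $\y=(c,b,a)$ with $a<b<c$ and $a+b\le c$, and let $\x=(x_1,x_2,x_3)\in\PAINVND_3(\y)$ with $\x\neq(1,1,1)$. Proposition~\ref{proposition: nondecreasing} immediately forces $x_1=1$. To see $x_2=1$, I would argue by contradiction: if $1<x_2\le a=\min_i y_i$ then Lemma~\ref{lemma: minentry} gives $\x\notin\PAINV_3(\y)$ (and $a=1$ makes the hypothesis $x_2>1$ impossible), while if $x_2>a$ then Theorem~\ref{theorem: unique 1}, applied after moving the unique entry $1$ into the coordinate carrying length $a$, yields $(x_2-a,x_3-a)\in\PAINV_2((c,b))$; since $c>b$, Theorem~\ref{theorem: invariant pair} forces $x_2=1+a$ and $x_3\in\{1+a,\;1+a+b\}$. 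Both possibilities I would eliminate by exhibiting the rearrangement $(1+a,1,x_3)$ and checking it fails: car~$1$ (length $c$) parks in spots $1+a,\dots,a+c$, leaving only the block $\{1,\dots,a\}$ and a block of size $b$ at the far right; car~$2$ (length $b>a$) is then forced into the right block, and afterwards the length-$a$ car with preference $1+a$ or $1+a+b$ finds no free spots at or beyond its preference, using $b+1\le c$. Hence $x_1=x_2=1$, i.e. $\x=(1,1,w)$ for some $w\in\N_{> 1}$.

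For the second part I would substitute $\x=(1,1,w)$ and $(y_1,y_2,y_3)=(c,b,a)$ into conditions \ref{it: mi triple (1)}, \ref{it: mi triple (2)}, \ref{it: mi triple (3)} from the proof of Theorem~\ref{theorem: mi triple}: $(1,1,w)\in\PA_3(\y)$ iff $w\le 1+b+c$; $(1,w,1)\in\PA_3(\y)$ iff $w\le 1+c$ or $w=1+a+c$; and $(w,1,1)\in\PA_3(\y)$ iff $w=1+b$, or $w=1+a+b$, or ($b\ge a$ and $w=1+a$). Since $(1,1,w)\in\PAINV_3(\y)$ iff all three hold, I would organize this into the usual table over the sub-clauses. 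As $b\ge a$ always holds, the third condition restricts $w$ to $\{1+a,\;1+b,\;1+a+b\}$; each of these satisfies both $w\le 1+b+c$ and $w\le 1+c$ — for $w=1+a$ and $w=1+b$ because $a,b<c$, and for $w=1+a+b$ \emph{precisely because} $a+b\le c$ — so all three values survive, while every row using the clause $w=1+a+c$ is vacuous, since $1+a+c$ cannot equal any of $1+a,1+b,1+a+b$ when $a<b<c$. This produces $\PAINVND_3(\y)=\{(1,1,1),(1,1,1+a),(1,1,1+b),(1,1,1+a+b)\}$, as claimed.

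The only step that is not purely mechanical is the elimination in the first paragraph: one must carefully track the position of the long car (length $c$) after it parks off-center, confirm that the two remaining cars of lengths $b$ and $a$ cannot both be accommodated around it, and observe that this rests on the integer inequality $b+1\le c$. Everything else is routine substitution and a finite table check. Note that the hypothesis $a+b\le c$ is used exactly once — to make the clause $w\le 1+c$ compatible with $w=1+a+b$ — which is precisely what distinguishes this case from the companion case $a+b>c$ treated in Proposition~\ref{prop: y=(c,b,a), a+b>c}.
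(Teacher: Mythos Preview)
Your proposal is correct and takes essentially the same approach as the paper: both reduce to $\x=(1,1,w)$ via Lemma~\ref{lemma: minentry} and Theorem~\ref{theorem: unique 1} (eliminating the candidates $(1,1+a,1+a)$ and $(1,1+a,1+a+b)$ using the very same rearrangement $(1+a,1,x_3)$), and then determine the admissible $w$ by intersecting the conditions \ref{it: mi triple (1)}, \ref{it: mi triple (2)}, \ref{it: mi triple (3)} from the proof of Theorem~\ref{theorem: mi triple}. Your write-up is in fact slightly more explicit than the paper's in two places: you spell out the parking dynamics showing $(1+a,1,x_3)\notin\PA_3(\y)$, and you isolate exactly where the hypothesis $a+b\le c$ is invoked.
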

\begin{proof}
Let $\x = (x_1, x_2, x_3) \in \PAINV_3(\y)$ be nondecreasing.
Clearly $x_1 = 1$.
We now show that $x_2=1$ by contradiction assuming that $x_2>1$ and considering the following two mutually exclusive possibilities:
\begin{itemize}
    \item[] 
    Case 1: Suppose $x_2 > a$.
    Then, Theorem~\ref{theorem: unique 1} implies $(x_2 - a, x_3 - a) \in \PAINV_2((c,b))$.
    By Theorem~\ref{theorem: invariant pair}, this implies $x_2 = 1 + a$ and either $x_3 = 1 + a$ or $x_3 = 1 + a + b$.
    However, $\x = (1, 1 + a, 1 + a) \notin \PAINV_3(\y)$ since its rearrangement $\x' = (1 + a, 1, 1 + a) \notin \PA_3(\y)$.
    Similarly, $\x = (1, 1 + a, 1 + a + b) \notin \PAINV_3(\y)$  since its rearrangement $\x' = (1 + a, 1, 1 + a + b) \notin \PA_3(\y)$.
    \item[] 
    Case 2: Suppose $x_2 \leq a$.
    If $a = 1$ we have $x_2 = 1$, contradicting the assumption that $x_2 > 1$.
    If $a > 1$, Lemma~\ref{lemma: minentry} implies $\x \notin \PAINV_3(\y)$.
\end{itemize}
Therefore, if $\x \in \PAINV_3(\y)$ is nondecreasing, then $x_1 = x_2 = 1$.
By the proof of Theorem~\ref{theorem: mi triple}, upon substituting $\x = (1, 1, w)$ and $\y = (c, b, a)$ with $a + b \leq c$, we have that $\x \in \PAINV_3(\y)$ for $w \in \N_{> 1}$ if and only if
\begin{enumerate}[label=(\arabic*)]
    \item $w\leq 1+b+c$ holds, and
    \item at least one of
    \begin{enumerate}[label=(2\alph*)]
        \item $w\leq 1+c$, or
        \item $w=1+a+c$ holds, and
    \end{enumerate}
    \item at least one of
    \begin{enumerate}[label=(3\alph*)]
        \item $w=1+b$, or
        \item $w=1+a+b$, or
        \item $b\geq a$ and $w=1+a$ holds.
    \end{enumerate}
\end{enumerate}
To find the valid solutions for $w$, consider the following table, where ``None'' indicates that selecting the conditions marked ``Yes'' leads to no solutions for $w$. 
Namely, in the table, each row corresponds to a combination of ways in which conditions (1), (2), and (3) may hold, and the solutions for $w$ that arise (if any).
Note that empty cells represent that the listed condition is immaterial to that row.
\begin{align*}
    \begin{array}{|c|c|c|c|c|c|c|}
        \hline
        \text{Cond. (1)} & \text{Cond. (2a)} & \text{Cond. (2b)} & \text{Cond. (3a)} & \text{Cond. (3b)}&\text{Cond. (3c)}  & w \\\hline
        \text{Yes} & \text{Yes} & & \text{Yes} & & & 1+b \\
        \hline
        \text{Yes} & \text{Yes} & & & \text{Yes} & & 1+a+b \\
        \hline
        \text{Yes} & \text{Yes} & & & & \text{Yes} & 1+a \\
        \hline
        \text{Yes} & & \text{Yes} & \text{Yes} & & & \text{None} \\
        \hline
        \text{Yes} & & \text{Yes} & & \text{Yes} & & \text{None} \\
        \hline
        \text{Yes} & & \text{Yes} & & & \text{Yes} &  \text{None}\\
        \hline 
    \end{array}
\end{align*}
Therefore, $w = 1 + a$, $w = 1 + b$, and $w = 1 + a + b$ are the only valid solutions satisfying $w > 1$.
This shows that $\PAINVND_3(\y)=\{(1,1,1), (1,1,1+a), (1,1,1+b), (1,1,1+a+b)\}$.
\end{proof}

\begin{proposition}
\label{prop: y=(c,b,a), a+b>c}
Let $\y=(c,b,a) \in \N^3$ with $a+b>c$. 
Then, $\PAINVND_3(\y)=\{(1,1,1),(1,1,1+a),(1,1,1+b)\}$.
\end{proposition}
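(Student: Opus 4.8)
The plan is to follow the proof of Proposition~\ref{prop: y=(c,b,a), a+b<=c} almost line for line, since the car-length list $\y=(c,b,a)$ is unchanged and only the side hypothesis flips from $a+b\le c$ to $a+b>c$. So I would take $\x=(x_1,x_2,x_3)\in\PAINVND_3(\y)$ with $\x\neq(1,1,1)$; Proposition~\ref{proposition: nondecreasing} forces $x_1=1$, and the crux of the reduction is to prove $x_2=1$, after which $\x=(1,1,w)$ with $w=x_3$ and the rest is a finite check on $w$.

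To show $x_2=1$ I would argue by contradiction, splitting on whether $x_2>a$ or $1<x_2\le a$, exactly as in Proposition~\ref{prop: y=(c,b,a), a+b<=c}. If $x_2>a$, then Theorem~\ref{theorem: unique 1} yields $(x_2-a,\,x_3-a)\in\PAINV_2((c,b))$; since $c>b$, Theorem~\ref{theorem: invariant pair}(2) forces this nondecreasing pair to be $(1,1)$ or $(1,b+1)$, i.e., $x_2=1+a$ and $x_3\in\{1+a,\,1+a+b\}$. I would rule out both via a bad rearrangement: for $\x=(1,1+a,1+a)$ use $\x'=(1+a,1,1+a)$, and for $\x=(1,1+a,1+a+b)$ use $\x'=(1+a,1,1+a+b)$. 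Running the parking experiment on either $\x'$: car $1$ (length $c$, preference $1+a$) fills spots $1+a,\ldots,a+c$, leaving a left gap of size $a<b$ and a right gap of size exactly $b$; car $2$ (length $b$, preference $1$) cannot use the left gap and is absorbed by the right gap; then car $3$'s preference, $1+a$ or $1+a+b$, lies within car $1$'s occupied block (the latter since $b<c$), so car $3$ sees only occupied spots ahead and fails. None of this uses the sign of $a+b-c$. If instead $1<x_2\le a$, then $a>1$ and $a=\min(c,b,a)$, so Lemma~\ref{lemma: minentry} gives $\x\notin\PAINV_3(\y)$. Hence $x_1=x_2=1$.

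With $\x=(1,1,w)$, I would invoke the proof of Theorem~\ref{theorem: mi triple} with $(y_1,y_2,y_3)=(c,b,a)$, which translates $(1,1,w)\in\PAINV_3(\y)$ into the three clauses: (1)~$w\le 1+b+c$; (2)~$w\le 1+c$ or $w=1+a+c$; (3)~$w=1+b$ or $w=1+a+b$ or ($b\ge a$ and $w=1+a$). I would then tabulate the six joint ways of satisfying (2) and (3) together with (1), as in the other three-car propositions. The three rows using clause (2b) collapse to the impossible equalities $a+c=b$, $c=b$, $c=0$ (using $a<b<c$); the row taking (2a) and (3a) gives $w=1+b$ (admissible since $b<c$), and (2a) with (3c) gives $w=1+a$. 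The hypothesis $a+b>c$ enters only in the row taking (2a) and (3b): there $w=1+a+b$ would require $1+a+b\le 1+c$, i.e., $a+b\le c$, which now fails, so — unlike in Proposition~\ref{prop: y=(c,b,a), a+b<=c} — that row yields no solution. Thus the admissible $w>1$ are exactly $1+a$ and $1+b$, giving $\PAINVND_3(\y)=\{(1,1,1),(1,1,1+a),(1,1,1+b)\}$. I do not expect a genuine obstacle; the only delicate point is the parking bookkeeping in the case $x_2>a$ — checking that car $1$ legally fits, that the right gap is exactly $b$ so it absorbs car $2$, and that car $3$'s preference lands inside car $1$'s block — while the table step is purely mechanical given $a<b<c$ and $a+b>c$.
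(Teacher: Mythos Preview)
Your proposal is correct and follows essentially the same approach as the paper's proof: the same two-case argument (via Theorem~\ref{theorem: unique 1}/Theorem~\ref{theorem: invariant pair} and Lemma~\ref{lemma: minentry}) to force $x_2=1$, the same bad rearrangements $(1+a,1,1+a)$ and $(1+a,1,1+a+b)$, and the same six-row table from Theorem~\ref{theorem: mi triple} with the row (2a)\,\&\,(3b) now killed by $a+b>c$. Your added parking bookkeeping is more explicit than the paper's but amounts to the same verification.
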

\begin{proof}
Let $\x = (x_1, x_2, x_3) \in \PAINV_3(\y)$ be nondecreasing.
Clearly $x_1 = 1$.
We now show that $x_2=1$ by contradiction assuming that $x_2>1$ and considering the following two mutually exclusive possibilities:
\begin{itemize}
    \item[] 
    Case 1: Suppose $x_2 > a$.
    Then, Theorem~\ref{theorem: unique 1} implies $(x_2 - a, x_3 - a) \in \PAINV_2((c,b))$.
    By Theorem~\ref{theorem: invariant pair}, this implies $x_2 = 1 + a$ and either $x_3 = 1 + a$ or $x_3 = 1 + a + b$.
    However, $\x = (1, 1 + a, 1 + a) \notin \PAINV_3(\y)$ since its rearrangement $\x' = (1 + a, 1, 1 + a) \notin \PA_3(\y)$.
    Similarly, $\x = (1, 1 + a, 1 + a + b) \notin \PAINV_3(\y)$  since its rearrangement $\x' = (1 + a, 1, 1 + a + b) \notin \PA_3(\y)$.
    \item[] 
    Case 2: Suppose $x_2 \leq a$.
    If $a = 1$ we have $x_2 = 1$, contradicting the assumption that $x_2 > 1$.
    If $a > 1$, Lemma~\ref{lemma: minentry} implies $\x \notin \PAINV_3(\y)$.
\end{itemize}
Therefore, if $\x \in \PAINV_3(\y)$ is nondecreasing, then $x_1 = x_2 = 1$.
By the proof of Theorem~\ref{theorem: mi triple}, upon substituting $\x = (1, 1, w)$ and $\y = (c, b, a)$ with $a + b > c$, we have that $\x \in \PAINV_3(\y)$ for $w \in \N_{> 1}$ if and only if
\begin{enumerate}[label=(\arabic*)]
    \item $w\leq 1+b+c$ holds, and
    \item at least one of
    \begin{enumerate}[label=(2\alph*)]
        \item $w\leq 1+c$, or
        \item $w=1+a+c$ holds, and
    \end{enumerate}
    \item at least one of
    \begin{enumerate}[label=(3\alph*)]
        \item $w=1+b$, or
        \item $w=1+a+b$, or
        \item $b\geq a$ and $w=1+a$ holds.
    \end{enumerate}
\end{enumerate}
To find the valid solutions for $w$, consider the following table, where ``None'' indicates that selecting the conditions marked ``Yes'' leads to no solutions for $w$. 
Namely, in the table, each row corresponds to a combination of ways in which conditions (1), (2), and (3) may hold, and the solutions for $w$ that arise (if any).
Note that empty cells represent that the listed condition is immaterial to that row.
\begin{align*}
    \begin{array}{|c|c|c|c|c|c|c|}
        \hline
        \text{Cond. (1)} & \text{Cond. (2a)} & \text{Cond. (2b)} & \text{Cond. (3a)} & \text{Cond. (3b)}&\text{Cond. (3c)}  & w \\\hline
        \text{Yes} & \text{Yes} & & \text{Yes} & & & 1+b \\
        \hline
        \text{Yes} & \text{Yes} & & & \text{Yes} & & \text{None} \\
        \hline
        \text{Yes} & \text{Yes} & & & & \text{Yes} & 1+a \\
        \hline
        \text{Yes} & & \text{Yes} & \text{Yes} & & & \text{None} \\
        \hline
        \text{Yes} & & \text{Yes} & & \text{Yes} & & \text{None} \\
        \hline
        \text{Yes} & & \text{Yes} & & & \text{Yes} &  \text{None}\\
        \hline 
    \end{array}
\end{align*}
Therefore, $w = 1 + a$ and $w = 1 + b$ are the only valid solutions satisfying $w > 1$.
This shows that $\PAINVND_3(\y)=\{(1,1,1), (1,1,1+a), (1,1,1+b)\}$.
\end{proof}

\end{document}